\documentclass{amsart}

\usepackage{latexsym,amssymb,amsmath, amsfonts}
\usepackage{amsthm}
\usepackage{amscd}

\numberwithin{equation}{section}
\usepackage{hyperref}
\hypersetup{    colorlinks = true,}
\parindent = 0 pt
\parskip = 12 pt

\newcommand{\R}{\mathbb{R}}

\renewcommand{\le}{\leqslant}
\renewcommand{\ge}{\geqslant}
\renewcommand{\leq}{\leqslant}
\renewcommand{\geq}{\geqslant}

\newcommand{\be}{\begin{equation}}
\newcommand{\en}{\end{equation}}
\newcommand{\ee}{\end{equation}}

\DeclareMathOperator{\supp}{supp}

\newcommand{\bt}{\begin{theorem}}
\newcommand{\et}{\end{theorem}}
\newcommand{\bp}{\begin{proof}}
\newcommand{\ep}{\end{proof}}
\newcommand{\bc}{\begin{cor}}
\newcommand{\ec}{\end{cor}}
\newcommand{\bl}{\begin{lemma}}
\newcommand{\el}{\end{lemma}}
\newcommand{\bprop}{\begin{prop}}
\newcommand{\eprop}{\end{prop}}

\newtheorem{theorem}{Theorem}[section]

\newtheorem{lemma}[theorem]{Lemma}

\newtheorem{prop}[theorem]{Proposition}

\newtheorem{cor}[theorem]{Corollary}

\numberwithin{theorem}{section} \numberwithin{definition}{section}

\theoremstyle{definition}

\author[J. Hickman]{Jonathan Hickman}
\address{School of Mathematics, The University of Edinburgh, Edinburgh EH9 3JZ, UK}
\email{jonathan.hickman@ed.ac.uk}
\author[F. Linares]{Felipe Linares}
\address{IMPA, Estrada Dona Castorina 110, Rio de Janeiro 22460-320, RJ Brasil}\email{linares@impa.br}
\author[O. G. Ria\~no]{Oscar G. Ria\~no}
\address{IMPA, Estrada Dona Castorina 110, Rio de Janeiro 22460-320, RJ Brasil}\email{ogrianoc@impa.br}
\author[K. M. Rogers]{Keith M. Rogers}
\address{Instituto de Ciencias Matem\'aticas CSIC-UAM-UC3M-UCM, 28049 Madrid, Spain}\email{keith.rogers@icmat.es}
\author[J. Wright]{James Wright}
\address{School of Mathematics and Maxwell Institute for Mathematical Sciences, The University of Edinburgh, Edinburgh EH9 3JZ, UK}\email{J.R.Wright@ed.ac.uk}
\thanks{Partially supported by CNPq and FAPERJ/Brazil, the MINECO grants SEV-2015-0554 and MTM2017-85934-C3-1-P, and the ERC grant 834728.}

\date{}
\title{On a higher dimensional version of the Benjamin--Ono equation}
\keywords{Benjamin--Ono equation, Strichartz estimates, local well-posedness}

\begin{document}

\begin{abstract} 
We consider a higher dimensional version of the Benjamin--Ono equation,
$\partial_t u -\mathcal{R}_1\Delta u+u\partial_{x_1} u=0$, where  $\mathcal{R}_1$ denotes the Riesz transform with respect to the first coordinate.
We first establish sharp space--time estimates for the associated linear equation. These estimates enable us to show that the initial value problem for the nonlinear equation is locally well-posed in $L^2$-Sobolev spaces $H^{s}(\mathbb{R}^d)$, with $s>5/3$ if $d=2$ and $s>d/2+1/2$ if $d\ge 3$. We also provide ill-posedness results.
\end{abstract}

\maketitle

\section{Introduction}\label{intro}

With $d\ge 2$, we consider the initial value problem for a higher dimensional version of the Benjamin--Ono equation;
\begin{equation}\label{HBO-IVP}\tag{HBO}
\begin{cases}
  \partial_t u - \mathcal{R}_1\Delta u+u\partial_{x_1} u=0,\hskip 15pt x\in \R^d,\,  t\in \R, \\
  u(x,0)= u_0.
  \end{cases}
\end{equation}
Here $\mathcal{R}_1$ denotes the Riesz transform with respect to the first coordinate~$x_1$ and~$\Delta$ denotes the Laplacian with respect to the full spatial variable $x\in\R^d$.  If $u(\cdot,t):\mathbb{R}^d\to \mathbb{R}$ solves \eqref{HBO-IVP} at a certain time $t$, then $-u(\cdot,-t)$ solves the analogous problem with~$-\mathcal{R}_1$ replaced by $\mathcal{R}_1$, and so the sign in the equation is not important.

Taking $d=1$, the Riesz transform coincides with the Hilbert transform, and so we recover the extensively studied Benjamin--Ono equation; see~\cite{BP, CW, Ponce1991} and the references therein. The equation maintains its physical interest with $d=2$; see for example~\cite{A,PS,VS} and the references therein. Both the cases $d=1$ and~$2$ have been used to model one dimensional internal waves in stratified fluids in $\mathbb{R}^3$ (with a vertical discontinuity in the density of the fluid). Indeed, Mari\c s~\cite{M} found that solitary wave solutions can still propagate when $d=2$, of the form $u(x_1,x_2,t)=\varphi(x_1- ct,x_2)$ (see also \cite{EP}).

With $d=1$, the available local well-posedness theory has been based on compactness methods. Indeed, Molinet, Saut and Tzvetkov~\cite{molin} proved that the problem cannot be solved in $L^2$-Sobolev spaces $H^{s}$ by Picard iteration. We will show that this remains true with $d\ge 2$, and so compactness methods will also play a role here. In higher dimensions, the $d=2$ case presents the most mathematical difficulties, at least with the techniques that we will employ.

Combining the Kato--Ponce commutator estimate~\cite{KP} with Gronwall's inequality, one can show that smooth solutions of ~\eqref{HBO-IVP} satisfy 
\begin{equation}\label{EE-HBO}
\sup_{[0,T]}\|u(t)\|_{H^s} \le \|u(0)\|_{H^s} \,\exp \Big(c\int_0^T \|\nabla u(t)\|_{L^{\infty}}\,dt\Big)
\end{equation}
for all $T>0$. Thus, if we could control the argument of the exponential function
by the $H^s$-norm, we could argue by compactness in order to establish the existence of solutions with less regularity. If this were to be done using Sobolev embedding, the required order of regularity would be $s>d/2+1$. However this would fail to take advantage of the additional {\it dispersion} of the higher dimensional equation. 

Instead we follow the idea introduced by Koch and Tzvetkov~\cite{KochT} to study the local well-posedness of the one dimensional Benjamin--Ono equation.   Roughly this consists of using Strichartz estimates rather than Sobolev embeddings. Extensions of this method were given by Kenig and K\"onig~\cite{KenigKo}. This will be the starting point in our analysis and so we first establish {\it Strichartz estimates} for the linear problem.

\subsection{Main Results} Our first result is a sharp  Strichartz estimate with $d\ge 2$. More precisely, we  consider
the linear equation
\begin{equation}\label{LHBO}\tag{LHBO}
\begin{cases}
 \partial_t v =\mathcal{R}_1\Delta v,\hskip 15pt x\in \R^d,\,  t\in \R,\\
 v(x,0)=f(x),
 \end{cases}
\end{equation}
where we can write $\mathcal{R}_1=-(-\Delta)^{-1/2}\partial_{x_1}$ with the fractional Laplacian defined as usual in terms of the Fourier transform 
$(-\Delta)^{s}f:=(|\cdot|^{2s}\widehat{f}\,)^{\vee}$. Smooth solutions to~\eqref{LHBO} can be similarly written as $$v(x,t)=e^{t\mathcal{R}_1\Delta} f(x):= \frac{1}{(2\pi)^d}\int_{\R^d} e^{i\xi\cdot x}e^{ i\xi_1|\xi|t}\widehat{f}(\xi)\,d\xi.$$  
We will prove the following theorem for data in homogeneous Sobolev spaces, with norm given by $\|f\|_{\dot{H}^s}:=\|(-\Delta)^{s/2}f\|_{L^2}$. The estimate is sharp with respect to the regularity and the Lebesgue exponents. Kenig, Ponce and Vega \cite[Theorem 2.4]{KPV} proved the analogous estimate for $d=1$ with data in $L^2(\R)$.

\begin{theorem}\label{main} Let $d\ge 3$ and $q<\infty$. Then there is a constant $C\equiv C(d,q,r)$ such that
$$
\|e^{t\mathcal{R}_1\Delta} f\|_{L_t^{r}(\R,L_x^q(\R^{d}))}\le C\,\| f\|_{\dot{H}^{s}(\R^d)}
$$
holds for all $f\in \dot{H}^{s}(\R^d)$  if and only if $\frac{2}{q}+\frac{2}{r}\le 1$ and $s=d(\frac{1}{2}-\frac{1}{q})-\frac{2}{r}$.\\
Moreover, with $d=2$, $$
\|e^{t\mathcal{R}_1\Delta} f\|_{L_t^{r}(\R,L_x^q(\R^{2}))}\le C\,\| f\|_{\dot{H}^{s}(\R^2)}
$$
holds for all $f\in \dot{H}^{s}(\R^2)$  if and only if $\frac{10}{q}+\frac{12}{r}\le 5$ and $s=1-\frac{2}{q}-\frac{2}{r}$.
\end{theorem}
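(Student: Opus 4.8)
The plan is to prove the estimate by the standard $TT^*$ machinery combined with a careful analysis of the oscillatory integral kernel, treating the fixed-time decay and the homogeneity separately, and then to establish sharpness via Knapp-type examples and scaling.

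\textbf{The positive direction.} First I would reduce to a fixed-time estimate. By scaling (the symbol $\xi_1|\xi|$ is homogeneous of degree $2$, so $e^{t\mathcal{R}_1\Delta}$ commutes with the parabolic dilations $f\mapsto f(\lambda\cdot)$ up to the correct powers of $\lambda$), the condition $s=d(\tfrac12-\tfrac1q)-\tfrac2r$ is forced and we may localise $f$ to the annulus $|\xi|\sim 1$ using a Littlewood--Paley decomposition, proving a frequency-localised estimate and summing. On the annulus $|\xi|\sim1$ the phase $\Phi(\xi)=\xi\cdot x+t\xi_1|\xi|$ has Hessian in $\xi$ (for fixed $t\neq 0$) whose determinant behaves like $|t|^{d}$ times a bounded quantity away from the set where $\xi_1=0$, but degenerates near $\xi_1=0$; the rank of the Hessian drops by exactly one there. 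A stationary phase / van der Corput analysis then gives the dispersive bound $\|e^{t\mathcal{R}_1\Delta}P_1 f\|_{L^\infty_x}\lesssim |t|^{-(d-1)/2}\,|t|^{-1/3}\|f\|_{L^1}$ for $d\geq 3$ — the $(d-1)/2$ from the nondegenerate directions and the extra $1/3$ from the cubic degeneracy in the $\xi_1$-direction — while for $d=2$ one similarly gets $|t|^{-1/2-1/3}=|t|^{-5/6}$. Interpolating this with the trivial $L^2$ bound, applying the Hardy--Littlewood--Sobolev inequality in $t$ (this is where the restriction $\tfrac2q+\tfrac2r\leq1$ when $d\geq3$, resp. the weighted condition $\tfrac{10}{q}+\tfrac{12}{r}\leq5$ when $d=2$, enters, via the exponent $(d-1)/2+1/3=d/2-1/6$), and using $TT^*$ together with the Christ--Kiselev lemma for the non-endpoint ranges $q,r$ yields the claimed estimate. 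The line $q<\infty$ and the non-sharp corner are handled by real interpolation with the easy estimates (e.g. $r=\infty$, $q=2$).

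\textbf{The role of the curvature degeneracy and the obstruction.} The main obstacle — and the reason the exponents in the $d=2$ statement look unusual — is precisely the degeneration of the Hessian of $\xi_1|\xi|$ along the hyperplane $\xi_1=0$. One cannot simply discard a neighbourhood of $\{\xi_1=0\}$: doing so loses the sharp range. Instead I would dyadically decompose the frequency annulus into slabs $|\xi_1|\sim 2^{-j}$, on each of which the phase, after rescaling, has a nondegenerate Hessian with determinant $\sim 2^{-j}$ in one variable and $\sim1$ in the rest; summing the resulting $j$-dependent dispersive estimates is a geometric series exactly at the endpoint exponent, which is what produces the extra $1/6$ loss and hence the modified scaling line $s=1-\tfrac2q-\tfrac2r$ together with the two-dimensional admissibility condition $\tfrac{10}{q}+\tfrac{12}{r}\leq 5$. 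Keeping track of this sum uniformly, and checking that the endpoint of the geometric sum is genuinely attained (so that no logarithmic loss appears), is the technical heart of the argument.

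\textbf{Sharpness.} For the necessity of the conditions I would use three families of test functions. Scaling (parabolic dilations as above) forces the relation between $s$ and $(q,r)$. A focusing/dispersing example (data with $\widehat f$ a bump on $|\xi|\sim1$, evaluated at $t\to 0$ and $t\to\infty$) rules out any improvement of the decay rate and hence pins down the admissibility line. Finally a Knapp example adapted to the degenerate direction — $\widehat f$ supported on a box of dimensions $1\times\cdots\times1\times\delta$ centred at a point with $\xi_1\sim 1$, or rather on the anisotropic region where the phase is essentially linear, which near $\xi_1=0$ has dimensions $\delta^{1/3}\times\cdots$ reflecting the cubic vanishing — shows that the weighted condition in the $d=2$ case cannot be relaxed to the isotropic one; one computes $\|e^{t\mathcal{R}_1\Delta}f\|_{L^r_tL^q_x}$ and $\|f\|_{\dot H^s}$ explicitly on this example and reads off the constraint. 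For $d\geq3$ the analogous Knapp computation reproduces $\tfrac2q+\tfrac2r\leq1$. I expect the Knapp example in the degenerate direction, and correctly identifying the right anisotropic box, to be the most delicate part of the sharpness argument.
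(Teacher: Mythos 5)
Your overall architecture (Littlewood--Paley reduction to unit frequency, a fixed-time dispersive bound, $TT^*$ with Hardy--Littlewood--Sobolev in time, Sobolev embedding off the sharp line, scaling plus Knapp examples for necessity) is the same as the paper's. However, there is a genuine error at the quantitative heart of the argument: your claimed fixed-time decay $\|e^{t\mathcal{R}_1\Delta}P_1f\|_{L^\infty}\lesssim |t|^{-(d-1)/2-1/3}\|f\|_{L^1}$ for $d\ge 3$ is false. The correct rate is $|t|^{-1}$, \emph{uniformly in $d\ge 3$}; the decay does not improve with dimension. The reason is that you have misread the degeneracy of $\Sigma(\xi)=\xi_1|\xi|$: on the hyperplane $\xi_1=0$ the rank of $\partial^2_{\xi\xi}\Sigma$ does not drop by one, it drops to $2$, because the entire lower-right block $\partial_{\xi_j}\partial_{\xi_k}\Sigma=\xi_1(\delta_{jk}-\xi_j\xi_k/|\xi|^2)/|\xi|$ ($j,k\ge 2$) vanishes there. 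Near $\xi_1=0$ the characteristic surface is essentially flat in all $d-2$ directions transverse to the $(\xi_1,|\xi'|)$ plane --- this is the ``unidimensional'' character of the equation --- and only a rank-$2$ stationary-phase gain, i.e. $|t|^{-1}$, survives. Your rate cannot be repaired, since feeding $|t|^{-(d-1)/2-1/3}$ into $TT^*$/HLS would yield Strichartz estimates strictly beyond the line $\frac2q+\frac2r=1$, contradicting the necessity half of the theorem; that necessity is exactly what the paper's tube example $\widehat f=\chi_{[0,R^{-1}]}(\xi_1)\chi_{[0,1]}(\xi_2)\cdots\chi_{[0,1]}(\xi_d)$ detects (a box thin in $\xi_1$, not a Knapp cap at a point with $\xi_1\sim 1$, where the Hessian is nondegenerate and which therefore produces no constraint beyond the Schr\"odinger-type one). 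The paper obtains the $|t|^{-1}$ bound not by $d$-dimensional stationary phase but by passing to polar coordinates, using the asymptotics of $\widehat{d\sigma}$ on $\mathbb{S}^{d-1}$, and estimating a one-dimensional oscillatory integral in the radial variable with phase $|(tr^2+x_1r,\,x'r)|$.

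Your treatment of $d=2$ also misplaces the degeneracy: $\det\partial^2_{\xi\xi}\Sigma=(2\xi_1^2-\xi_2^2)/|\xi|^2$, so the Hessian degenerates on the lines $\xi_2=\pm\sqrt{2}\,\xi_1$, not near $\xi_1=0$, and a dyadic decomposition into slabs $|\xi_1|\sim 2^{-j}$ does not isolate the bad set. The exponent $5/6=1/2+1/3$ you propose is numerically correct, and the heuristic (one curved direction plus a cubic degeneracy) is of the right flavour, but the cubic vanishing occurs along $2\xi_1^2=\xi_2^2$; correspondingly the sharp Knapp example in the paper is a $\delta^{1/2}\times\delta^{1/3}$ box in the eigenbasis of $\partial^2_{\xi\xi}\Sigma(\zeta)$ at a point $\zeta$ with $2\zeta_1^2=\zeta_2^2$, which is what produces $\frac{10}{q}+\frac{12}{r}\le 5$. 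Finally, note that for the one-dimensional HLS step the off-line cases are handled by spatial Sobolev embedding rather than Christ--Kiselev, and the kernel analysis (even in $d=3$, where one must separately treat the region $|x'|\lesssim|t|$ where the radial phase has nearly coincident critical points) is considerably more delicate than a clean rank count suggests.
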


We will also prove a {\it local smoothing} estimate. This kind of estimate was first proved by Kato~\cite{K} for the Korteweg--de Vries equation, and by Vega~\cite{V} and Constantin--Saut~\cite{CS} for the Schr\"odinger equation. With $d=1$, the following estimate follows from an identity first observed  by  Kenig, Ponce and Vega~\cite[Lemma 2.1]{KPV}.

\begin{theorem}\label{smoothing} Let $d\ge 2$ and $\alpha >1/2$. Then there is a constant $C\equiv C(d,\alpha)$ such that
$$
\int_{\R^{d+1}}|e^{t\mathcal{R}_1\Delta} f(x)|^2 \frac{dxdt}{(1+|x|^2)^\alpha}\le C\,\|f\|_{\dot{H}^{-1/2}(\R^d)}.
$$
\end{theorem}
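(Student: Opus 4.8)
The plan is to prove this by the mechanism behind the identity of Kenig, Ponce and Vega \cite[Lemma 2.1]{KPV}: a Plancherel computation in the variables dual to $(x_2,\dots,x_d,t)$, carried out after freezing $x_1$ and linearising the dispersion relation $\tau = \xi_1|\xi|$ in the variable $\xi_1$. Write $x = (x_1,x')$ with $x' = (x_2,\dots,x_d)$, and likewise $\xi = (\xi_1,\xi')$. Since $(1+|x|^2)^{-\alpha}\le (1+x_1^2)^{-\alpha}$ and $\int_{\R}(1+x_1^2)^{-\alpha}\,dx_1 < \infty$ precisely when $\alpha > 1/2$, Fubini reduces the theorem to the bound
\[
\int_{\R}\int_{\R^{d-1}}|e^{t\mathcal{R}_1\Delta}f(x_1,x')|^2\,dx'\,dt \le C_d\,\|f\|_{\dot{H}^{-1/2}(\R^d)}^2
\]
with $C_d$ independent of $x_1\in\R$. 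Here it suffices to take $f$ in the Schwartz class (the general case follows by density), and for such $f$ the right-hand side is finite precisely because $d\ge 2$.

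To prove the displayed bound, fix $x_1$ and, for each fixed $\xi'$, change variables $\xi_1\mapsto \tau := \xi_1|\xi| = \xi_1\sqrt{\xi_1^2+|\xi'|^2}$ in the solution formula $e^{t\mathcal{R}_1\Delta}f(x_1,x') = c\int_{\R^d}e^{i(x_1\xi_1+x'\cdot\xi'+t\,\xi_1|\xi|)}\widehat{f}(\xi)\,d\xi$. The key computation is $\partial\tau/\partial\xi_1 = (\xi_1^2+|\xi|^2)/|\xi| =: m(\xi)$, which is strictly positive for $\xi\ne 0$; hence $\xi_1\mapsto\tau$ is an increasing bijection of $\R$ with inverse $\xi_1 = \xi_1(\tau,\xi')$, and the solution becomes $c\int_{\R^{d-1}}\int_{\R}e^{i(x'\cdot\xi'+t\tau)}e^{ix_1\xi_1(\tau,\xi')}\widehat{f}(\xi_1(\tau,\xi'),\xi')\,m(\xi_1(\tau,\xi'),\xi')^{-1}\,d\tau\,d\xi'$. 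This exhibits $(x',t)\mapsto e^{t\mathcal{R}_1\Delta}f(x_1,x')$ as the inverse Fourier transform on $\R^{d-1}\times\R$ of a function whose modulus does not depend on $x_1$, so Plancherel in $(x',t)$, followed by undoing the change of variables ($d\tau = m\,d\xi_1$), yields
\[
\int_{\R}\int_{\R^{d-1}}|e^{t\mathcal{R}_1\Delta}f(x_1,x')|^2\,dx'\,dt = c\int_{\R^d}\frac{|\xi|}{\xi_1^2+|\xi|^2}\,|\widehat{f}(\xi)|^2\,d\xi \le c\int_{\R^d}\frac{|\widehat{f}(\xi)|^2}{|\xi|}\,d\xi,
\]
using $\xi_1^2+|\xi|^2\ge|\xi|^2$ in the last step. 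The right-hand side equals $c\,\|f\|_{\dot{H}^{-1/2}(\R^d)}^2$, uniformly in $x_1$, which is what was needed.

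The one point requiring care — and the main obstacle — is justifying the change of variables $\Psi(\xi_1,\xi') = (\xi_1|\xi|,\xi')$: it is a smooth diffeomorphism of $\R^d$ off the null sets $\{\xi = 0\}$ and $\{\xi' = 0\}$, with a.e.\ Jacobian $m(\xi) = (\xi_1^2+|\xi|^2)/|\xi|$, and one must check that this suffices to transport the Plancherel identity between the physical and frequency sides. I expect the mild low-frequency singularity $m(\xi)^{-1}\sim |\xi|^{-1}$ to be the only place the hypothesis $d\ge 2$ is used: it is exactly what makes $\int_{|\xi|\le 1}|\xi|^{-1}|\widehat{f}(\xi)|^2\,d\xi$ finite for Schwartz $f$, which legitimises all of the manipulations above and matches the restriction in the statement. (As displayed, the right-hand side of the theorem should be squared; the argument above proves the estimate with $\|f\|_{\dot{H}^{-1/2}}^2$ on the right.)
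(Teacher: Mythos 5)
Your argument is correct, and it takes a genuinely different route from the paper. The paper applies Plancherel in $t$ alone, realises the resulting $\tau$-slices as measures on the graphs $\xi_1=\gamma_\tau(\xi')$, and then invokes a separately proved weighted trace theorem for graphs of measurable functions (valid for $\alpha>1/2$) in dual form to perform the $x$-integration against the full weight $(1+|x|^2)^{-\alpha}$. You instead freeze $x_1$, use that $\xi_1\mapsto\tau=\xi_1|\xi|$ is, for each fixed $\xi'$, a strictly increasing bijection of $\R$ with Jacobian $m(\xi)=(2\xi_1^2+|\xi'|^2)/|\xi|$, and apply Plancherel in all of $(x',t)$ at once; since the transformed symbol has modulus independent of $x_1$, this yields the identity
\begin{equation*}
\int_{\R}\int_{\R^{d-1}}|e^{t\mathcal{R}_1\Delta}f(x_1,x')|^2\,dx'\,dt \;=\; c\int_{\R^d}\frac{|\xi|}{2\xi_1^2+|\xi'|^2}\,|\widehat{f}(\xi)|^2\,d\xi
\end{equation*}
uniformly in $x_1$, which is exactly the frequency-side quantity the paper also arrives at, and the theorem follows using only $(1+|x|^2)^{-\alpha}\le(1+x_1^2)^{-\alpha}$ and the integrability of the latter for $\alpha>1/2$. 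Your version is the direct higher-dimensional analogue of the Kenig--Ponce--Vega identity that the paper cites for $d=1$; it bypasses the trace theorem entirely and proves the strictly stronger statement with $\sup_{x_1}$ in place of the weighted $x$-integral (indeed an identity rather than an inequality at the level of the weight $m(\xi)^{-1}$). The trace-theorem route is more robust — it would apply to dispersion relations admitting no global monotone parametrisation by a single frequency variable — but for this phase your argument is shorter and sharper. The technical points you flag (legitimacy of the a.e.\ change of variables off the null sets $\{\xi=0\}$, $\{\xi'=0\}$, and finiteness of $\int|\xi|^{-1}|\widehat f|^2$ for Schwartz $f$ when $d\ge2$) are routine: for Schwartz $f$ the transformed symbol lies in $L^1\cap L^2$ of $d\tau\,d\xi'$, so Plancherel applies directly and the general case follows by density. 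You are also right that the right-hand side of the stated theorem should carry $\|f\|_{\dot H^{-1/2}}^2$; that is a typo in the statement.
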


 As remarked in the introduction,  one can prove that~\eqref{HBO-IVP} is locally well-posed in inhomogeneous Sobolev spaces 
 $H^s(\mathbb{R}^d)$ for $s>d/2+1$ using standard compactness methods. These spaces are defined in the same way as the homogeneous spaces but with  $(-\Delta)^{s/2}$ replaced by $J^s:=(\mathrm{I}-\Delta)^{s/2}$. Using the Strichartz estimates  we make the following improvement of the standard result. The Sobolev space $W^{1,\infty}$ is defined as usual with norm $\|f\|_{W^{1,\infty}}:=\|f\|_{L^{\infty}}+\|\nabla f\|_{L^{\infty}}$.  
  
\begin{theorem}\label{imprwellpos}
Let $s>s_d$ where $s_d := d/2+1/2$ for $d\ge 3$ and $s_2 := 5/3$. Then, for any $u_0\in H^s(\mathbb{R}^d)$, there exist a time $T=T(\left\|u_0\right\|_{H^s})$ and a unique solution~$u$ to~\eqref{HBO-IVP} that belongs to
\begin{equation*}
    C\big([0,T);H^s(\mathbb{R}^d)\big)\cap L^1\big([0,T);W^{1,\infty}(\mathbb{R}^d)\big).
\end{equation*}
Moreover, the flow map $u_0 \mapsto u(t)$ is continuous from $H^s(\mathbb{R}^d)$ to $H^s(\mathbb{R}^d)$.
\end{theorem}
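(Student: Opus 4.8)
The plan is to follow the Koch--Tzvetkov philosophy of replacing the Sobolev embedding in the classical compactness scheme by the Strichartz estimates of Theorem~\ref{main}, and then to upgrade the resulting a priori bound to genuine local well-posedness by a Bona--Smith type argument. Mollifying the data, $u_0^\e=\rho_\e\ast u_0$, the classical theory (valid for $H^\sigma$, $\sigma>d/2+1$) provides smooth solutions $u^\e\in C([0,T_\e^*);H^\sigma)$. The central claim is that, for $s>s_d$,
$$
\|\nabla u^\e\|_{L^1([0,T];L^\infty_x)} \le C(T)\,\M(1+\M), \qquad \M:=\sup_{[0,T]}\|u^\e(t)\|_{H^s},
$$
with $C(T)\to 0$ as $T\to 0$. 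Granting this, inserting it into the energy inequality~\eqref{EE-HBO} and running a continuity/bootstrap argument on $T\mapsto\M$ forces $\M\le 2\|u_0\|_{H^s}$ once $T\le T(\|u_0\|_{H^s})$; combined with the blow-up criterion for the classical flow this also gives $T_\e^*\ge T$ uniformly in $\e$. Thus $\{u^\e\}$ is bounded in $L^\infty([0,T];H^s)\cap L^1([0,T];W^{1,\infty})$ independently of $\e$.

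The heart of the matter is the proof of the claim. Decompose $u^\e=P_{\le1}u^\e+\sum_{N\gg1}u^\e_N$ into Littlewood--Paley pieces; the low-frequency part is controlled by Sobolev embedding with a gain of a power of $T$. For $N\gg1$, partition $[0,T]$ into $\sim TN^{a}$ intervals $I$ of length $N^{-a}$ (with $a=1$ the natural choice when $d\ge3$) and, on each $I$, use Duhamel from its left endpoint $t_I$,
$$
u^\e_N(t)=e^{(t-t_I)\mathcal{R}_1\Delta}u^\e_N(t_I)-\int_{t_I}^t e^{(t-t')\mathcal{R}_1\Delta}P_N\partial_{x_1}\big((u^\e)^2/2\big)(t')\,dt'.
$$
Apply Theorem~\ref{main} on $I$ for an admissible pair $(q,r)$ with $q$ large, pass from $L^q_x$ to $L^\infty_x$ by Bernstein (losing $N^{d/q}$, harmless as $q\to\infty$), bound $\|u^\e_N(t_I)\|_{L^2}\le N^{-s}\M$ and $\|P_N((u^\e)^2)\|_{L^2}\lesssim N^{-s}\|(u^\e)^2\|_{\dot H^s}\lesssim N^{-s}\M^2$ (using that $H^s$ is an algebra for $s>d/2$), apply H\"older in time on $I$, and sum over the $\sim TN^a$ intervals and then over the dyadic $N$. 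The derivative loss created by $\partial_{x_1}$ in the Duhamel term is compensated by the short length $N^{-a}$ of each interval, and optimising the balance one checks that the sum over $N$ converges precisely when $s>s_d$, which also produces the factor $C(T)\to0$. In $d=2$ the Strichartz estimate of Theorem~\ref{main} is genuinely weaker---note the more restrictive admissibility condition $\tfrac{10}{q}+\tfrac{12}{r}\le 5$ and the larger regularity $s=1-\tfrac2q-\tfrac2r$---so this step requires a more careful choice of the time-localization scale and yields the slightly larger threshold $s_2=5/3$.

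With the uniform bounds in hand, the remaining three conclusions follow by standard arguments. \emph{Existence}: since $\partial_t u^\e=\mathcal{R}_1\Delta u^\e-u^\e\partial_{x_1}u^\e$ is bounded in $L^\infty([0,T];H^{s-2})$, the Aubin--Lions lemma gives, along a subsequence, $u^\e\to u$ strongly in $C([0,T];H^{s'}_{\mathrm{loc}})$ for every $s'<s$ and weak-$*$ in $L^\infty([0,T];H^s)$; this passes to the limit in the equation, while lower semicontinuity yields $u\in L^1([0,T];W^{1,\infty})$ and $u\in C_w([0,T];H^s)$. \emph{Uniqueness}: for two solutions $u,v$ in this class, $w=u-v$ solves $\partial_t w-\mathcal{R}_1\Delta w+u\partial_{x_1}w+w\partial_{x_1}v=0$; pairing with $w$ in $L^2$, integrating by parts in the first nonlinear term, and using $u,v\in L^1([0,T];W^{1,\infty})$, Gronwall forces $w\equiv0$. \emph{Continuous dependence and strong $H^s$ continuity}: run the Bona--Smith scheme---extend the a priori estimate to level $s+\theta$ to obtain $\|u^\e\|_{L^\infty_T H^{s+\theta}}\lesssim\e^{-\theta}$, combine with the $L^2$ difference bound $\|u^\e-u\|_{L^\infty_T L^2}\lesssim\e^{s}$ coming from the uniqueness estimate, and perform an $H^s$ energy estimate for the difference with the analogous commutator cancellation, to conclude $u^\e\to u$ in $C([0,T];H^s)$; the same estimates applied to two sets of data give continuity of the flow map.

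The main obstacle is the Strichartz a priori estimate of the second paragraph: making the frequency sum converge down to $s>s_d$, and dealing with the weaker two-dimensional Strichartz inequality, which is the source of the larger exponent $5/3$ when $d=2$. Everything afterwards is a by-now-standard regularization/compactness/Bona--Smith package.
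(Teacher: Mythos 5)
Your proposal is correct and follows essentially the same route as the paper: a frequency-localized, time-interval-split Strichartz estimate in the Kenig--Koenig style to control $\|\nabla u\|_{L^1_TL^\infty_x}$, an energy/bootstrap argument giving uniform bounds and a uniform lifespan for the regularized solutions, an $L^2$--Gronwall uniqueness argument in the class $L^1_TW^{1,\infty}$, and a Bona--Smith scheme for existence in $C([0,T];H^s)$ and continuous dependence. The only cosmetic differences are that the paper uses the same time-localization scale $|I_m|\sim 2^{-k}T$ in every dimension (the $d=2$ loss enters solely through the weaker admissible exponents of Theorem~\ref{main}, not through a different choice of scale), and it proves the mollified solutions are Cauchy in $C([0,T];H^s)$ directly rather than first extracting a weak limit by compactness.
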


As mentioned above, we will show that the flow map $u_0 \mapsto u(t)$ is not of class $C^2$ for any $s\in \mathbb{R}$.  In particular, this implies
that~\eqref{HBO-IVP} cannot be solved using
the Duhamel formulation combined with the contraction mapping principle.

\begin{theorem}\label{illpossed}

Let $s\in \mathbb{R}$. Then~\eqref{HBO-IVP} does not admit a  solution $u$  such that the flow map
$u_0 \mapsto u(t)$
is $C^2$-differentiable from $H^s(\mathbb{R}^d)$ to $H^s(\mathbb{R}^d)$. 
\end{theorem}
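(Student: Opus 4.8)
The plan is to follow the standard strategy for proving that a flow map cannot be $C^2$, going back to Bourgain and developed by Molinet--Saut--Tzvetkov for the one-dimensional Benjamin--Ono equation. Suppose for contradiction that for some $s\in\R$ there is a solution map $u_0\mapsto u(t)$ which is $C^2$ from $H^s(\R^d)$ to $H^s(\R^d)$. Consider data of the form $u_0 = \delta\phi$ with $\delta>0$ small and $\phi$ fixed, write $u(\delta) = u(\cdot,t;\delta\phi)$, and Taylor expand in $\delta$ around $\delta=0$. Since $u(0)\equiv 0$, we get $u(\delta) = \delta u_1 + \delta^2 u_2 + o(\delta^2)$ in $H^s$, uniformly for $t$ in a fixed interval, where $u_1 = \partial_\delta u|_{\delta=0}$ solves the linear equation $\partial_t u_1 = \mathcal R_1\Delta u_1$ with $u_1(0)=\phi$, hence $u_1(t) = e^{t\mathcal R_1\Delta}\phi$, and $u_2 = \tfrac12\partial_\delta^2 u|_{\delta=0}$ solves the inhomogeneous linear equation $\partial_t u_2 - \mathcal R_1\Delta u_2 = -\,u_1\partial_{x_1}u_1$ with $u_2(0)=0$. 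By Duhamel,
\begin{equation*}
u_2(t) = -\int_0^t e^{(t-t')\mathcal R_1\Delta}\big(u_1(t')\,\partial_{x_1}u_1(t')\big)\,dt'.
\end{equation*}
The $C^2$ hypothesis forces $\|u_2(t)\|_{H^s} \lesssim \|\phi\|_{H^s}^2$ with a constant independent of $\phi$.

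The next step is to choose $\phi$ so that this bound fails. Following the usual construction, one takes $\widehat\phi$ to be the sum of (normalised) indicator functions of two well-separated, suitably thin boxes located at high frequency $N$, positioned so that the phase function $\xi_1|\xi|$ behaves in a degenerate way under the convolution coming from the product $u_1\partial_{x_1}u_1$ — that is, so that the combined space--time phase $(t-t')\big(\eta_1|\eta|\big) + t'\big(\xi_1|\xi| + (\eta-\xi)_1|\eta-\xi|\big)$ is nearly stationary on the support, producing no oscillation and hence no decay in the $t'$-integral. One should exploit the anisotropy of the symbol $\xi_1|\xi|$: placing the boxes so that the first coordinates of the two frequency pieces nearly cancel in the relevant resonance relation makes the phase $\xi_1|\xi|+(\eta-\xi)_1|\eta-\xi| - \eta_1|\eta|$ small on a set of size comparable to the full box. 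Computing the $H^s$ norm of the resulting $u_2(t)$ against the $H^s$ norm of $\phi$, one finds the ratio blows up as $N\to\infty$ (the precise power of $N$ gained being governed by the measure of the near-resonant set and the size of the boxes), contradicting the uniform quadratic estimate. Since $s$ was arbitrary, this proves the theorem; the argument is dimension-uniform because the obstruction already lives on a two-dimensional slice (the $(\xi_1,|\xi'|)$ variables), so one can take the extra frequencies supported near zero or in a fixed unit box.

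I expect the main obstacle to be the explicit frequency-support construction and the associated stationary-phase bookkeeping: one must choose the boxes so that (i) the resonance function vanishes to the right order on a large enough subset, (ii) the output $\widehat{u_2(t)}$ is supported where its $H^s$ weight $|\xi|^s$ is as large as possible, and (iii) the lower bound on $\|u_2(t)\|_{H^s}$ survives after integrating in $t'$ over a fixed interval — which requires that the non-oscillatory region not be annihilated by cancellation between the two box-interactions. A secondary technical point is justifying the Taylor expansion rigorously: one needs the $C^2$ regularity of the flow to transfer to an estimate on the second Gateaux derivative in the $H^s$ topology, and to identify that derivative with the Duhamel term $u_2$ above; this is routine given the hypothesis but should be stated carefully. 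Everything else — the mapping properties of $e^{t\mathcal R_1\Delta}$ on $H^s$, and the reduction to a single frequency scale $N$ — is standard.
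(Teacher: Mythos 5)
Your overall strategy coincides with the paper's: reduce $C^2$ ill-posedness to the failure of the uniform bilinear estimate
\[
\Big\|\int_0^t e^{(t-t')\mathcal{R}_1\Delta}\partial_{x_1}\big[(e^{t'\mathcal{R}_1\Delta}\phi_1)(e^{t'\mathcal{R}_1\Delta}\phi_2)\big]\,dt'\Big\|_{H^s}\lesssim \|\phi_1\|_{H^s}\|\phi_2\|_{H^s},
\]
via Taylor expansion of the flow at the zero datum and identification of the second derivative with the Duhamel bilinear term; this reduction is done correctly. But the counterexample itself --- which is the entire content of the theorem --- is not constructed, and the configuration you sketch points in the wrong direction. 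You place both frequency boxes at height $N$ with ``the first coordinates of the two frequency pieces nearly cancelling.'' If the two input frequencies have nearly opposite first coordinates, the output frequency $\xi$ has $|\xi_1|$ small, and the factor $\xi_1$ produced by $\partial_{x_1}$ --- the only source of the divergent power of $N$ once the resonance denominator is neutralized --- is then small rather than of size $N$. So the blow-up you need does not occur in that configuration.

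The construction that works (the one in the paper, following Molinet--Saut--Tzvetkov) is a high--low interaction: $\widehat{\phi_{1,N}}$ is a normalized indicator of $D_1=[N,N+\lambda]\times[\lambda^{1/d}/2,\lambda^{1/d}]^{d-1}$ and $\widehat{\phi_{2,N}}$ of $D_2=[3\lambda,4\lambda]\times[\lambda^{1/d}/2,\lambda^{1/d}]^{d-1}$, with $\lambda=N^{-(1+\epsilon)}$ and $0<\epsilon<1/(2d-1)$. Then the output lives at $\xi_1\sim N$ (so $\partial_{x_1}$ contributes a full factor $N$), while the resonance function $\sigma(\xi,\eta)=-\xi_1|\xi|+(\xi_1-\eta_1)|\xi-\eta|+\eta_1|\eta|$ has size $\sim\lambda N=N^{-\epsilon}\to 0$, so that $|(e^{i\sigma t}-1)/\sigma|=|t|+O(N^{-\epsilon})$ and no oscillatory gain is available; one then computes $\|I_N(t)\|_{H^s}\gtrsim N^{(1-\epsilon(2d-1))/(2d)}\,|t|\to\infty$. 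Note also that the anisotropic box dimensions ($\lambda$ in $\xi_1$, $\lambda^{1/d}$ in the remaining variables) and the threshold $\epsilon<1/(2d-1)$ are genuinely dimension-dependent choices needed to make the measure of the convolution set $K_\xi$ comparable to $\lambda^{(2d-1)/d}$ while keeping the data bounded in $H^s$; your claim that the argument is ``dimension-uniform because the obstruction lives on a two-dimensional slice'' glosses over precisely this bookkeeping. A secondary point: since you work with a single datum $\phi=\phi_{1,N}+\phi_{2,N}$ rather than the polarized bilinear form, you must also check that the self-interactions do not cancel the cross term; the paper handles this by frequency-support separation in its Corollary following the theorem.
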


With $d=2$, we use the existence of solitary wave solutions~\cite{M} to show that the flow map cannot be uniformly continuous in $L^2(\R^2)$.

\begin{prop}\label{illpossedl2}
Let $d=2$. Then~\eqref{HBO-IVP} does not admit a solution $u$ such that the flow map $u_0\mapsto u(t)$ is uniformly continuous from $L^2(\mathbb{R}^2)$ to $L^2(\mathbb{R}^2)$.
\end{prop}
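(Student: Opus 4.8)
The plan is to adapt the classical Koch--Tzvetkov argument for ill-posedness via soliton interactions. By Mari\c s~\cite{M} there exists a solitary wave solution $u(x_1,x_2,t)=\varphi(x_1-ct,x_2)$ to~\eqref{HBO-IVP} for some speed $c>0$ with $\varphi\in L^2(\R^2)$; note the scaling symmetry $u_\lambda(x,t):=\lambda u(\lambda x,\lambda^2 t)$ maps solutions to solutions and preserves the $L^2(\R^2)$ norm, so one obtains a two-parameter family of solitary waves
$$
u_{\lambda,c}(x,t)=\lambda\,\varphi\big(\lambda x_1-\lambda c t,\lambda x_2\big)
$$
with $\|u_{\lambda,c}(\cdot,t)\|_{L^2}=\|\varphi\|_{L^2}$ independent of $\lambda$ and $t$. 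First I would fix two nearby speeds of the form $\lambda_j c$ (equivalently, rescale a single soliton by two nearby parameters $\lambda_1,\lambda_2$ with $\lambda_1-\lambda_2$ small) and compare the two exact solutions $u_{\lambda_1,c}$ and $u_{\lambda_2,c}$.

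The key steps, in order: (i) record that $\|u_{\lambda_1,c}(\cdot,0)-u_{\lambda_2,c}(\cdot,0)\|_{L^2}\to 0$ as $\lambda_1-\lambda_2\to 0$, by continuity of the map $\lambda\mapsto \lambda\varphi(\lambda\,\cdot)$ in $L^2(\R^2)$ (density of smooth compactly supported functions plus dominated convergence handles this); (ii) show that at a suitably chosen later time $t$ the two solutions have separated, so that $\|u_{\lambda_1,c}(\cdot,t)-u_{\lambda_2,c}(\cdot,t)\|_{L^2}\geq \tfrac12\sqrt{2}\,\|\varphi\|_{L^2}-o(1)$. Here the two profiles are $\lambda_j\varphi(\lambda_j x_1-\lambda_j c t,\lambda_j x_2)$; their centres in the $x_1$-variable sit at $x_1=ct$ and travel at the \emph{same} physical speed, so separation must instead be extracted from the mismatch in the dilation parameter. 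The cleanest route is to rescale: applying the symmetry $u\mapsto \lambda_1^{-1}u(\lambda_1^{-1}x,\lambda_1^{-2}t)$ we may assume $\lambda_1=1$, and then compare $\varphi(x_1-ct,x_2)$ with $\mu\,\varphi(\mu x_1-\mu ct,\mu x_2)$ for $\mu=\lambda_2/\lambda_1$ close to $1$; choosing $t=T(\mu)$ large (of order $|\mu-1|^{-1}$) forces the argument $\mu x_1-\mu ct$ versus $x_1-ct$ to differ by $(\mu-1)(x_1-ct)$, which is large on the support of either profile, so the two bumps are essentially disjointly supported and the $L^2$ distance of the difference is bounded below by a fixed fraction of $\|\varphi\|_{L^2}$; (iii) conclude that a uniformly continuous flow map is impossible, since uniform continuity would force the left-hand side in (ii) to be small whenever the left-hand side in (i) is small, uniformly in the (arbitrarily large) elapsed time $t$, a contradiction.

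I expect the main obstacle to be step (ii): because the higher-dimensional solitary waves of Mari\c s are not given by an explicit formula and in particular their precise spatial decay is not elementary, one must argue the ``separation after rescaling'' using only that $\varphi\in L^2(\R^2)$ (together, perhaps, with $\varphi\not\equiv 0$ and $\varphi$ localised in an $L^2$ sense). The device is to truncate: pick $R$ so that $\int_{|y|>R}|\varphi|^2<\e$, and note that for $\mu$ close to $1$ and $t\sim \delta/|\mu-1|$ with $\delta$ large depending on $R$, the sets where $|x_1-ct|\le R$ and where $|\mu x_1-\mu ct|\le R$ (i.e. $|x_1-ct|\le R/\mu$) \emph{do} overlap in $x_1$ — so naive translation is not what produces separation. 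Rather, the correct mechanism, as in Koch--Tzvetkov, is to choose the two parameters so that the \emph{speeds} differ: take solitons of speeds $c_1\ne c_2$ (any two distinct admissible speeds, which exist by scaling applied to Mari\c s's solution), with $L^2$ norms made equal by a further dilation, and with $c_1-c_2\to 0$ while keeping the profiles from collapsing; then the centres separate at linear rate $|c_1-c_2|\,t$, and at time $t\sim |c_1-c_2|^{-1}\cdot(\text{large})$ the bumps are disjointly supported up to $\e$. One then has to check that shrinking $|c_1-c_2|$ can be arranged so that the two initial data converge in $L^2$ while the profiles stay in a compact subset of $L^2\setminus\{0\}$; this is where the homogeneity of the scaling (which fixes the $L^2$ norm) is essential, and it is the only genuinely delicate bookkeeping in the argument.
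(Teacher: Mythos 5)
Your overall strategy (decoherence of solitary waves with nearby parameters) is the right one, but there are two concrete problems. First, your scaling formula is wrong: applying $u\mapsto\lambda u(\lambda\cdot,\lambda^2\cdot)$ to $\varphi(x_1-ct,x_2)$ produces $\lambda\varphi(\lambda x_1-c\lambda^2t,\lambda x_2)$, a solitary wave of physical speed $\lambda c$, not $\lambda\varphi(\lambda x_1-\lambda ct,\lambda x_2)$ of speed $c$ as you wrote. Your erroneous conclusion that the two rescaled solitons ``travel at the same physical speed'' is what pushes you away from the mechanism that actually works. Second, and more seriously, your fallback plan --- keep the profiles at a fixed scale, take speeds with $c_1-c_2\to0$, and wait until $t\sim|c_1-c_2|^{-1}$ for the bumps to separate --- does not prove the proposition: the statement concerns uniform continuity of $u_0\mapsto u(t)$ for a \emph{fixed} $t$, and an argument in which the decoherence time blows up as the data converge gives no contradiction at any fixed time. (A further dilation ``to equalize the $L^2$ norms'' is also not available: only the specific scaling above preserves the solution class; fortunately in $d=2$ that scaling is already an $L^2$ isometry, so no normalization is needed.)

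The repair is exactly the point your scaling slip obscured. Set $u_c(x,t)=c\varphi_1(cx_1-c^2t,cx_2)$ and take $c_1=n+1$, $c_2=n$, so the speed difference is fixed ($=1$) while the profiles shrink at scale $1/n$. The initial data converge to each other in $L^2$ because $c_1/c_2\to1$ (the profiles $c\varphi_1(c\,\cdot)$ depend continuously on the dilation ratio, and the scaling is $L^2$-norm preserving), while for any fixed $t>0$ the relative displacement of the centres, measured in units of the shrinking width, is $\sim nt\to\infty$: after a change of variables,
\begin{equation*}
\big\langle u_{c_1}(\cdot,t),u_{c_2}(\cdot,t)\big\rangle_{L^2}
=\frac{c_1}{c_2}\int\varphi_1\Big(\tfrac{c_1}{c_2}\big(x_1-c_2(c_1-c_2)t\big),\tfrac{c_1}{c_2}x_2\Big)\overline{\varphi_1(x)}\,dx\longrightarrow0,
\end{equation*}
by dominated convergence (only $\varphi_1\in L^2$ is needed, via approximation by compactly supported functions --- your truncation device is the right tool here, no decay estimates for $\varphi_1$ required). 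Hence $\|u_{c_1}(\cdot,t)-u_{c_2}(\cdot,t)\|_{L^2}\to\sqrt2\,\|\varphi_1\|_{L^2}$ at every fixed $t>0$, while the data converge, which is the desired contradiction.
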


Some remarks are in order:
\begin{enumerate}
\item If $u$ solves~\eqref{HBO-IVP}, then so does the scaled version $u_{\lambda}$ defined by
$$
u_{\lambda}(x,t):=\lambda u(\lambda x,\lambda^2 t), 
$$
for any positive $\lambda$. On the other hand, one can calculate that 
\begin{equation*}
    \left\|u_{\lambda}(\cdot,t)\right\|_{\dot{H}^s}=\lambda^{1-d/2+s}\left\|u(\cdot,\lambda^2 t)\right\|_{\dot{H}^s}.
\end{equation*}
 As a consequence, the scale-invariant regularity for~\eqref{HBO-IVP}  is $s=d/2-1$. 
 In particular, the $d=2$ problem is $L^2$-critical.  Thus our results are far from reaching the regularity  suggested by scaling.

\item The higher dimensional Benjamin--Ono equation has a Hamiltonian structure. Formally, there are at least three quantities conserved by the flow:
    \begin{align*}
    I(u)&:=\int u(x,t) \, dx, \\
    M(u)&:=\int u^2(x,t) \, dx, \\
    H(u)&:=\int \left|(-\Delta)^{1/4} u(x,t)\right|^2- \frac{1}{3}u^3(x,t) \, dx. 
    \end{align*}
Note that  $H^{1/2}(\mathbb{R}^d)\hookrightarrow L^3(\mathbb{R}^d)$ by Sobolev embedding when $d\le 3$, so that $H(u)$ is well-defined in those cases. Unfortunately our local well-posedness results require too much regularity to be able to take advantage of this.
\item  For the one dimensional Benjamin--Ono equation, Tao~\cite{TaoBO} introduced a gauge transformation which allowed him to establish local and global results in $H^1(\R)$. In the end, it was possible to go all the way to $L^2(\R)$ using this gauge transformation; see~\cite{IoneKenig} and~\cite{molinetPilod}. We do not know if there is
such a gauge transformation for the higher dimensional Benjamin--Ono equation. 
\end{enumerate}

We will begin with the linear equation~\eqref{LHBO}. In the following section, we prove the local smoothing estimate of Theorem~\ref{smoothing}. In the third section, we prove sharp decay rates for certain relevant oscillatory integrals, and in the fourth section we use a well-known variant of an argument due to Tomas~\cite{T} to establish the Strichartz estimates of Theorems~\ref{main}. We then proceed to  consider the initial value problem for the nonlinear equation~\eqref{HBO-IVP}. The fifth section is devoted to proving Theorem~\ref{imprwellpos}.  We conclude the paper with an appendix where we show the ill-posedness results stated in Theorem~\ref{illpossed} and Proposition~\ref{illpossedl2}.


\section{Proof of Theorem~\ref{smoothing}}

We will require the following trace estimate for domains with boundaries that can be written as graphs of measurable functions.

\begin{theorem}\label{trace theorem} Let $d\ge 2$, $\alpha>1/2$ and $\gamma:\R^{d-1}\to \R$ be any measurable function. Then there is a constant $C\equiv C(d,\alpha)$, independent of $\gamma$, such that
$$
\|Tf\|_{L^2(\R^{d-1})}\le C\,\|f\|_{H^\alpha(\R^d)},
$$
where  $T: f\mapsto f\big(\gamma(\cdot),\cdot\big)$ is the trace operator.
\end{theorem}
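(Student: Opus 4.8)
The plan is to reduce the estimate to a one-dimensional trace inequality applied in the first coordinate direction, uniformly in the remaining variables, exploiting the fact that the graph only moves in the $x_1$-direction. Write a point of $\R^d$ as $(x_1, x') \in \R \times \R^{d-1}$, and correspondingly split the Fourier variable as $(\xi_1, \xi')$. For fixed $x'$, the section $x_1 \mapsto f(x_1, x')$ is a function on $\R$, and the trace $Tf(x') = f(\gamma(x'), x')$ is simply its value at the single point $x_1 = \gamma(x')$. The key point is the classical one-dimensional Sobolev embedding $H^\alpha(\R) \hookrightarrow L^\infty(\R)$, valid precisely because $\alpha > 1/2$, with the explicit bound $|g(y)|^2 \le C_\alpha \int_\R (1+|\eta|^2)^\alpha |\widehat g(\eta)|^2\, d\eta$ for every $y \in \R$ and every $g \in H^\alpha(\R)$. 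Applying this with $g(\cdot) = f(\cdot, x')$ and $y = \gamma(x')$ gives, for each fixed $x'$,
\begin{equation*}
|Tf(x')|^2 = |f(\gamma(x'), x')|^2 \le C_\alpha \int_\R (1+|\xi_1|^2)^\alpha \, |\widehat{f_{x'}}(\xi_1)|^2 \, d\xi_1,
\end{equation*}
where $\widehat{f_{x'}}(\xi_1)$ denotes the partial Fourier transform of $f$ in the $x_1$-variable only. Crucially, the constant $C_\alpha$ is independent of $x'$ and hence of $\gamma$, which is exactly the uniformity the statement demands.

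Next I would integrate this pointwise inequality over $x' \in \R^{d-1}$. By Fubini (or rather Plancherel in $x_1$ followed by Fubini), $\int_{\R^{d-1}} \int_\R (1+|\xi_1|^2)^\alpha |\widehat{f_{x'}}(\xi_1)|^2\, d\xi_1\, dx'$ equals $\int_{\R^{d-1}} \int_\R (1+|\xi_1|^2)^\alpha |\widehat{f}(\xi_1, \xi')|^2\, d\xi_1\, d\xi'$ after a further Plancherel in the $x'$-variables. Since $(1+|\xi_1|^2)^\alpha \le (1+|\xi|^2)^\alpha$, this is bounded by $\|f\|_{H^\alpha(\R^d)}^2$, completing the proof. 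One should note that a density argument is needed to make the pointwise evaluation meaningful: the estimate is first proved for Schwartz $f$ (where $Tf$ is genuinely defined pointwise), and then $T$ extends by continuity to all of $H^\alpha$; the measurability of $\gamma$ ensures $Tf$ is a measurable function of $x'$.

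The main subtlety — really the only one — is the measurability issue: because $\gamma$ is merely measurable, one cannot appeal to continuity of $f$ in the $x_1$-variable in a naive way, and one must be slightly careful that $x' \mapsto f(\gamma(x'), x')$ is a well-defined measurable function and that Fubini applies. For Schwartz $f$ this is immediate since $(x', x_1) \mapsto f(x_1, x')$ is jointly continuous and $x' \mapsto (\gamma(x'), x')$ is measurable, so the composition is measurable; the pointwise bound above then holds everywhere, and integrating is legitimate. Passing to general $f \in H^\alpha$ is then routine. Everything else is a direct application of one-dimensional Sobolev embedding and Plancherel, with no loss anywhere, so the argument is short.
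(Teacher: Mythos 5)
Your argument is correct, and it takes a genuinely different route from the paper's. You work on the primal side: freeze $x'$, apply the one-dimensional Sobolev embedding $H^{\alpha}(\R)\hookrightarrow L^{\infty}(\R)$ (valid precisely for $\alpha>1/2$, with a constant that is uniform in the evaluation point and hence in $\gamma$) to the fibre $x_1\mapsto f(x_1,x')$, and then integrate in $x'$ using Plancherel together with the trivial pointwise bound $(1+|\xi_1|^2)^{\alpha}\le(1+|\xi|^2)^{\alpha}$. The paper instead argues by duality: it reduces the claim to the adjoint estimate for the extension operator $h\mapsto\widehat{h\,d\sigma}$ with $d\sigma$ the graph measure, squares out, and bounds the resulting bilinear form using the standard decay of the Bessel kernel $J^{-2\alpha}$ followed by Cauchy--Schwarz and Young's inequality, the hypothesis $\alpha>1/2$ entering through the integrability of $|x'-y'|^{-(d-2\alpha)}$ on $\R^{d-1}$. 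Both proofs exploit the same structural fact --- the graph lies over the $x_1$-direction, so the first variable can be ``ignored'' --- but yours is more elementary (no kernel estimates) and makes explicit the anisotropic improvement that only $H^{\alpha}$ regularity in $x_1$, with $L^2$ in the remaining variables, is actually needed. One practical remark: the paper deliberately records the statement in its dual form~\eqref{dual}, since that is the form invoked verbatim in the proof of Theorem~\ref{smoothing}; your primal estimate yields~\eqref{dual} by the same duality, so nothing is lost. Your treatment of the measurability of $x'\mapsto f(\gamma(x'),x')$ and the density argument for general $f\in H^{\alpha}$ is also adequate.
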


\begin{proof}
Writing $x=(x_1,x')$ and $d\sigma(x_1,x') =\delta\big(x_1-\gamma(x')\big)$, by duality, it will suffice to prove
\begin{equation*}
\int_{\R^{d}} |\widehat{h d\sigma}(\xi)|^2 \frac{d\xi}{(1+|\xi|^2)^\alpha} \le C\,\|h\|^2_{L^2(d\sigma)}.
\end{equation*}
Writing $g(x') := h(\gamma(x'),x')$, this can be rewritten as
\begin{equation}\label{dual}
\int_{\R^{d}} \Big|\int_{\R^{d-1}}g(x')e^{-i[\xi_1\gamma(x')+\xi'\cdot x']}dx' \Big|^2 \frac{d\xi}{(1+|\xi|^2)^\alpha} \le C\,\|g\|_{L^2(\R^{d-1}}).
\end{equation}

Thus, by squaring out and Fubini's theorem, it would suffice to prove
$$
\int_{\R^{d-1}}\int_{\R^{d-1}}g(x') \overline{g(y')}\int_{\R^{d}}  e^{-i[\xi_1(\gamma(x')-\gamma(y'))+\xi'\cdot(x'-y')]} \frac{d\xi}{(1+|\xi|^2)^\alpha} dx'dy' \le C\,\|g\|^2_{L^2(\R^{d-1})}.
$$
The inner integral is a Fourier transform, and so the left-hand side of this can be written as
$$
(2\pi)^d\int_{\R^{d-1}}\int_{\R^{d-1}}g(x') \overline{g(y')} J^{-2\alpha}\big(\gamma(y')-\gamma(x'), y'-x'\big) dx'dy',
$$
where the Bessel potential $J^{-2\alpha}$ is well-known to  satisfy
$$
|J^{-2\alpha}(x)|\le  C_{d,\alpha}\left\{  
\begin{array}{lcl}
|x|^{-(d-2\alpha)},  & \mbox{when} & 
|x|\le 1 \\
|x|^{-d},  & \mbox{when} &  
|x|>1; \\
\end{array} \right.
$$
see, for example, \cite[Section 6.1.2]{Graf}.
In particular, by simply ignoring the $x_1$-variable, we have that 
$$
|J^{-2\alpha}\big(\gamma(y')-\gamma(x'),y'-x'\big)|\le \phi_{\alpha}(x'-y')
$$
where $\phi_{\alpha}:\R^{d-1}\to \R$ is an integrable function as long as $\alpha>1/2$. Thus it remains to prove
$$
\int_{\R^{d-1}}\int_{\R^{d-1}}|g(x') g(y')| \phi_\alpha (x'-y')\,dx'dy' \le C\,\|g\|^2_{L^2({\color{red}\R^{d-1}})},
$$
which follows by the Cauchy--Schwarz inequality and then Young's inequality.
\end{proof}

\begin{proof}[Proof of Theorem~\ref{smoothing}] By Plancherel's identity, 
$$
(2\pi)^{2d+1}\int|e^{t\mathcal{R}_1\Delta} f(x)|^2 dt=\int \Big| \int_{\R^d} \widehat{f}(\xi)\delta(\xi_1|\xi|-\tau)e^{ix\cdot\xi}\,d\xi\Big|^2d\tau,
$$
so by writing $\xi=(\xi_1,\xi')$ and changing variables $u=\xi_1|\xi|$, we see that
\begin{align*}
(2\pi)^{2d+1}\int|e^{t\mathcal{R}_1\Delta} f(x)|^2 dt&=\int \Big| \int_{\R^d} \widehat{f}(\xi)\delta(u-\tau)e^{ix\cdot\xi}\,\frac{|\xi|dud\xi'}{2\xi_1^2+|\xi'|^2}\Big|^2d\tau\\
&\!\!\!\!\!\!\!\!\!\!\!\!\!\!\!\!=\int \left|\int_{\R^{d-1}} \widehat{f}(\gamma_\tau(\xi'),\xi'\,)e^{i[x_1\gamma_\tau(\xi')+x'\cdot \xi']}\,\frac{\sqrt{\gamma^2_\tau(\xi')+|\xi'|^2}}{2\gamma_\tau^2(\xi')+|\xi'|^2}d\xi'\right|^2d\tau,
\end{align*}
where $\gamma^2_\tau(\xi') := \sqrt{\tau^2+(\frac{1}{2}|\xi'|^2)^2}-\frac{1}{2}|\xi'|^2$. Now by integrating both sides with respect to $(1+|x|^2)^{-\alpha}dx$, applying Fubini's theorem and the trace theorem in dual form~\eqref{dual}, we obtain 
$$
\int_{\R^{d+1}}|e^{t\mathcal{R}_1\Delta} f(x)|^2 \frac{dxdt}{(1+|x|^2)^\alpha}
\le C\!\int \int_{\R^{d-1}} \left| \widehat{f}(\gamma_\tau(\xi'),\xi')\frac{\sqrt{\gamma^2_\tau(\xi')+|\xi'|^2}}{2\gamma_\tau^2(\xi')+|\xi'|^2}\right|^2d\xi'd\tau.
$$
Finally, we reverse the change of variables so that
$$
\int_{\R^{d+1}}|e^{t\mathcal{R}_1\Delta} f(x)|^2 \frac{dxdt}{(1+|x|^2)^\alpha}
\le C\!\int_{\R^d} |\widehat{f}(\xi)|^2\frac{|\xi|d\xi}{2\xi_1^2+|\xi'|^2},
$$
which is slightly better than the desired bound.
\end{proof}

\section{The oscillatory integral}

An analogous estimate to the following with $d=1$ was proven by Ponce and Vega~\cite[Corollary 2.3]{PV} with decay of order $|t|^{-1/2}$.  

\begin{prop}\label{thelemma} Let $d\ge 3$ and $\psi:\mathbb{R}\to \mathbb{R}$ be a  Schwartz function supported in~$[1/2,2]$. Then there is a constant $C\equiv C(d,\psi)$ such that
$$
\Big|\int_{\R^d} \psi(|\xi|)\, e^{i[t\xi_1|\xi| +x\cdot\xi]}d\xi\Big|\le C|t|^{-1},\quad x\in \R^d.
$$
Moreover, with $d=2$, 
$$
\Big|\int_{\R^2} \psi(|\xi|)\, e^{i[t\xi_1|\xi| +x\cdot\xi]}d\xi\Big|\le C|t|^{-5/6},\quad x\in \R^2.
$$
Both rates of decay are optimal.
\end{prop}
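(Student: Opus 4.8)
The plan is to estimate the oscillatory integral $I(x,t) = \int_{\R^d} \psi(|\xi|)\, e^{i\phi(\xi)}\,d\xi$ with phase $\phi(\xi) = t\xi_1|\xi| + x\cdot\xi$ by the method of stationary phase, exploiting the curvature of the hypersurface $\{(\xi,\xi_1|\xi|)\}$ that is relevant to \eqref{LHBO}. By scaling (replacing $t$ by a bounded quantity when $|t|\le 1$) and rotational symmetry in the $\xi' = (\xi_2,\dots,\xi_d)$ variables, one reduces to $|t|\ge 1$ and to understanding the Hessian of $\xi\mapsto \xi_1|\xi|$. A direct computation shows that this Hessian is, up to the factor $|\xi|^{-1}$, the matrix whose behaviour is governed by $\xi_1$ and $|\xi'|$: on the support of $\psi$ it is nondegenerate away from the set where $\xi_1 = 0$, and on $\{\xi_1 = 0\}$ exactly one eigenvalue degenerates. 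I would first dispose of the region where $|\xi_1|$ is bounded below (say $|\xi_1| \gtrsim \delta$) by the standard nondegenerate stationary phase estimate in all $d$ variables, which gives decay $|t|^{-d/2}$, more than enough.

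The main work is the region near $\xi_1 = 0$. Here I would integrate first in the $\xi'$ variables, where the phase $t\xi_1|\xi| + x'\cdot\xi'$ has nondegenerate Hessian in $\xi'$ of size $\sim |t\xi_1|$ (one checks $\partial_{\xi'}^2(\xi_1|\xi|) \sim \xi_1/|\xi| \cdot \mathrm{Id}$ plus a rank-one correction), producing a gain of $(|t\xi_1|)^{-(d-1)/2}$ together with a new phase in $\xi_1$ obtained from the critical point. One is then left with a one-dimensional integral in $\xi_1 \in [-\delta,\delta]$ of an amplitude of size $(|t\xi_1|)^{-(d-1)/2}$ against an oscillatory factor. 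For $d\ge 3$ the exponent $(d-1)/2 \ge 1$ makes the amplitude itself non-integrable near $\xi_1 = 0$, so one must also use the $\xi_1$-oscillation: a van der Corput estimate in $\xi_1$ (the remaining phase has a nonvanishing derivative or second derivative uniformly, after localising dyadically in $|\xi_1|\sim 2^{-k}$) yields, on each dyadic piece $|\xi_1|\sim 2^{-k}$, a bound $\sim \min\big((|t|2^{-k})^{-(d-1)/2}\cdot 2^{-k},\; (|t|2^{-k})^{-(d-1)/2}\cdot (|t|2^{-k})^{-1/2}\big)$ or similar; summing the geometric-type series over $k$ with the two competing bounds balanced at $|t|2^{-k}\sim 1$ gives the sharp $|t|^{-1}$ when $d\ge 3$. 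I expect the bookkeeping of these dyadic sums, and verifying that the second-derivative bounds in $\xi_1$ survive after the $\xi'$-integration, to be the most delicate point.

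For $d = 2$ the above scheme degenerates: after integrating in the single variable $\xi_2$ near $\xi_1 = 0$ one gains only $(|t\xi_1|)^{-1/2}$, and the remaining $\xi_1$-integral must be analysed more carefully because the stationary point in $\xi_2$ and the degeneracy in $\xi_1$ interact — effectively one is looking at a phase with an $\mathrm{A}_2$-type (cusp) singularity near $\xi_1 = 0$, which is why the sharp exponent drops to $5/6$ rather than $1$. Concretely I would localise to $|\xi_1|\sim 2^{-k}$, apply one-dimensional van der Corput twice (in $\xi_2$ then in $\xi_1$), and on the critical scale $2^{-k}\sim |t|^{-1/3}$ the worst contribution is $|t|^{-1/2}\cdot |t|^{-1/3} = |t|^{-5/6}$; the other scales sum to something no larger. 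The optimality claim in both cases is verified by exhibiting the appropriate $x$ (choose $x$ so that the stationary point lies exactly on the degenerate set $\xi_1 = 0$) and showing the integral is genuinely of the asserted size there, e.g. by a lower bound via a non-stationary-phase-free region of the integrand; this is a routine matching computation. The hardest conceptual step is the correct handling of the coupled degeneracy for $d = 2$, and the hardest technical step is the dyadic summation for $d\ge 3$.
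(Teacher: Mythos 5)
There is a genuine gap: you have misidentified the degeneracy locus of the Hessian of $\Sigma(\xi) := \xi_1|\xi|$, and your whole decomposition is organised around the wrong set. A direct computation (carried out for $d=2$ in the paper's proof of Theorem~\ref{main}) gives
\[
\det \partial^2_{\xi\xi}\Sigma(\xi) \;=\; \frac{\xi_1^{\,d-2}\,\bigl(2\xi_1^2-|\xi'|^2\bigr)}{|\xi|^{d}},
\]
so the Hessian degenerates on the cone $\{2\xi_1^2=|\xi'|^2\}$ in every dimension, and only \emph{additionally} on $\{\xi_1=0\}$ when $d\ge 3$. In particular, for $d=2$ the Hessian at $\xi_1=0$, $|\xi|=1$ has determinant $-1$: it is nondegenerate there, and two-dimensional stationary phase already yields $|t|^{-1}$ in that region. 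The $|t|^{-5/6}$ rate is forced by the cusp ($A_2$) degeneracy on the cone $2\xi_1^2=\xi_2^2$ --- exactly where the paper's sharpness example for Theorem~\ref{main} places its wave packet, with the anisotropic $\delta^{1/2}\times\delta^{1/3}$ scaling along the eigenvectors of the rank-one Hessian. Your scheme (nondegenerate stationary phase for $|\xi_1|\gtrsim\delta$, then $\xi'$-integration near $\xi_1=0$) therefore never isolates the worst-case contribution, and your optimality construction (``put the stationary point on $\xi_1=0$'') would not produce the $5/6$ exponent. A secondary quantitative error: the $\xi'$-block of the Hessian is $\tfrac{\xi_1}{|\xi|}\bigl(I-\xi'\otimes\xi'/|\xi|^2\bigr)$, whose determinant is $\sim\xi_1^{d+1}$ rather than $\xi_1^{d-1}$, so even in the region you do treat the stationary-phase gain is $|t|^{-(d-1)/2}|\xi_1|^{-(d+1)/2}$, not $(|t\xi_1|)^{-(d-1)/2}$, and the deferred dyadic bookkeeping would have to be redone from scratch.

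For contrast, the paper sidesteps the Hessian analysis entirely by integrating over spheres first: since the amplitude is radial and the phase restricted to $|\xi|=r$ is linear in $\omega=\xi/r$, the angular integral is exactly $\widehat{\sigma}(y(r))$ with $y(r)=(tr^2+x_1r,\,x'r)$, and the curvature of the sphere supplies the full $|y|^{-(d-1)/2}$ decay irrespective of the cone degeneracy. Matters then reduce to one-dimensional oscillatory integrals in $r$ with phase $|y(r)|$: for $d\ge4$ the amplitude decay alone suffices, for $d=3$ one integration by parts is needed, and for $d=2$ a case analysis with van der Corput applied to the second and third derivatives of $r\mapsto|y(r)|/|t|$ yields the extra $|t|^{-1/3}$ on top of the $|t|^{-1/2}$ amplitude factor; optimality is then deduced from the sharpness of the Strichartz estimates. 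If you wish to salvage a Cartesian/Hessian approach, the analysis must be centred on the cone $2\xi_1^2=|\xi'|^2$, not on $\{\xi_1=0\}$.
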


\begin{proof} We first remark that the decay rates cannot be improved as otherwise the estimates of Theorem~\ref{main} would hold in larger ranges, which is not possible.

To prove the estimates, we write the integral as a two-fold iterated integral in radial and spherical
coordinates;
\begin{equation}\label{osci1}
    \begin{aligned}
    I(x,t):=\int_{\R^d} \psi(|\xi|)\, e^{i[t\xi_1|\xi| +x\cdot\xi]}d\xi &\ = \ \int^\infty_0 \rho(r) \Bigl[ \int_{{\mathbb{S}}^{d-1}} 
e^{i[t r^2 \omega_1  + x\cdot r \omega ]}   d\sigma(\omega) \Bigr] \, dr \\
&\ = \ \int_0^\infty \rho(r) {\widehat{\sigma}}(y(r)) dr
    \end{aligned}
\end{equation}
where $\rho(r) := \psi(r) r^{d-1}$ and $y(r) :=: y_{x,t}(r) := (t r^2 + x_1 r, x_2 r, \ldots, x_d r)$. We consider first the easier higher dimensional cases, when $d\ge 4$, then $d=3$, finally treating the harder $d=2$ case.


{\bf Dimensions $d\ge 4$}. Consider the second iterated integral in~\eqref{osci1} where the integral over the
sphere ${\mathbb{S}}^{d-1}$ is performed first and computed as
a Fourier transform ${\widehat{\sigma}}(y(r))$. We use the
stationary phase estimate
$$|{\widehat{\sigma}}(y(r))| \le C_d \min(1, |y(r)|^{-(d-1)/2}) \le
C \min(1, |t r^2 + x_1 r|^{-(d-1)/2})$$ (see, for example, \cite[Corollary 4.16]{MS}) to give a proof of
the desired uniform estimate for $I(x,t)$ for $d\ge 4$.
We split
the radial integration via $\supp(\rho) \subseteq E^1 \cup E^2$
where 
\begin{align*}
    E^1 &:= \{ 1/2 \leq r \leq 2 : |r + x_1/t| \le 1/|t| \}, \\
    E^2 &:= \{ 1/2 \leq r \leq 2 : |r + x_1/t| \ge 1/|t| \}
\end{align*}
and decompose $I(x,t)$ accordingly.
On the one hand, we have the uniform estimate
$|E^1| \le C|t|^{-1}$ for the measure of $E^1$.
On the other hand, 
$$
\int_{E^2} |t r^2 + x_1 r|^{-(d-1)/2} dr \le C_d \, |t|^{-(d-1)/2}
\int_{E^2} |r + (x_1/t)|^{-(d-1)/2} dr \le C |t|^{-1}
$$
when $d\ge 4$,
giving us the uniform estimate $|I(x,t)| \le C |t|^{-1}$
in that case. Note that the definition of $E^2$ keeps the integrand away from the singularity.

{\bf Preliminaries for $d=2,3$}. For the lower dimensional cases, we adjust the argument in the following way.
We proceed as before, splitting the radial integral into
two parts via a decomposition $F^1$ and $F^2$ where $F^1$ contains
 $E^1$ (and hence $F^2$ will be
contained in $E^2$) but will nevertheless still satisfy the uniform bound $|F^1| \le C |t|^{-1}$.
Thus we can estimate the part of $I(x,t)$ defined over $F^1$ by a constant multiple of
$|t|^{-1}$ as before -- this
only uses the trivial bound $|{\widehat{\sigma}}(y(r))| \lesssim 1$\footnote{We write $A\lesssim B$ if
there exists a constant $c>0$ such that $A \le c B$. Also we write $A \sim B$ if both $A\lesssim B$
and $B \lesssim A$ hold.}
and the estimate $|F^1| \lesssim |t|^{-1}$.
For the part over~$F^2$,
we need the more refined asymptotic expansion
$$
{\widehat{\sigma}}(y(r)) \ = \ c_1 \frac{e^{i |y(r)|}}{|y(r)|^{(d-1)/2}}
+ c_2 \frac{e^{-i |y(r)|}}{|y(r)|^{(d-1)/2}} + {\mathcal E}(y(r)),
$$
(see, for example, \cite[Example 10.4.3]{Graf})
valid for large $|y(r)|$,
where $c_1$ and $c_2$ are constants depending only on $d$,
and the uniform estimate $|{\mathcal E}(y(r))| \le C_d |y(r)|^{-(d+1)/2}$
holds. The argument above works as before for the integral
of ${\mathcal E}(y(r))$ over~$F^2$ (since $F^2$ is contained
in $E^2$) but is now valid for $d=2$ and $d=3$.

Hence matters are reduced to bounding integrals of the form
\begin{equation}\label{osciint}
\int_{F^2} e^{\pm i\phi(r)} \rho(r) \, {\phi(r)}^{-(d-1)/2} dr
\end{equation}
where $\phi(r) := |y(r)|$.
We will need to examine the phase $\phi(r)$ more closely;
write $\phi(r) = \sqrt{f(r)}$ where
$f(r) := (t r^2 + x_1 r)^2 + r^2 |x'|^2$ and
$x' = (x_2,\ldots, x_d)$ so that $x = (x_1, x')$.

{\bf Dimension $d=3$}. We may assume that $|x_1| \sim |t|$ and $|x'|/|t| \le c_0$
for any small absolute constant $c_0$ (to be determined
later); otherwise, $\phi(r) \gtrsim |t|$
on the support of $\rho$. Then, no matter how we define $F^1$ and $F^2$ (in this case,
we can take $F^1 = E^1$ and $F^2 = E^2$), we obtain
$$
\Bigl|\int_{F^2} e^{\pm i\phi(r)} \rho(r) \, {\phi(r)}^{-1} dr \Bigr| \ \lesssim \
\int_{F^2} |\rho(r)|\,|\phi(r)|^{-1} dr \ \lesssim \ |t|^{-1}.
$$
Now, when $d=3$, we can write the integral
in~\eqref{osciint} as
\begin{equation}\label{osciint1}
\mp 2i\int_{F^2} \frac{d}{dr}\bigl[ e^{\pm i\phi(r)} \bigr] \rho(r)
f'(r)^{-1} dr.    
\end{equation}
Although the derivative $f'$ can vanish to order two on the support of $\rho$, this cannot happen when $|x_1| \sim |t|$ and $|x'|/|t|$ is small. In this case, we
enlarge the set $E^1$
to  $F^1 := E^1 \cup \{ 1/2 \leq r \leq 2: |f'(r)| \le |t| \}$ and so
$$
F^2 \ := \ \{ 1/2 \leq r \leq 2 : |r + (x_1/t)|\ge 1/|t| \ {\rm and} \
|f'(r)|\ge |t| \}
$$
which is contained in $E^2$. We will see that the
uniform estimate $|F^1| \lesssim |t|^{-1}$ still holds.
On the other hand,  the set $F^2$ can be written as a finite union of intervals such that $\rho / f'$ is monotone on each subinterval. Taking one such interval $(a,b)$, we then integrate by parts:
\begin{align}\label{polkadot}
&\ \Big|\int_{a}^b \frac{d}{dr} \bigl[e^{\pm i\phi(r)} \bigr] \rho(r)
f'(r)^{-1} dr\Big|\\\nonumber
\le &\ \Big|\Big[e^{\pm i\phi(r)}\rho(r)
f'(r)^{-1}\Big]_a^b\Big|+\int_{a}^b \Big|e^{\pm i\phi(r)} \Big(\rho(r)
f'(r)^{-1}\Big)'\Big| dr\\\nonumber
\lesssim&\ \big[\min_{r\in (a,b)}|f'(r)|\big]^{-1}+\Big|\int_{a}^b \Big(\rho(r)
f'(r)^{-1}\Big)' dr\Big|\\\nonumber
\lesssim&\ \big[\min_{r\in (a,b)}|f'(r)|\big]^{-1} \lesssim
\ |t|^{-1},\end{align}
which yields the desired bound.
To prove the bound on $F^1$, a computation shows that $f'$
has two real roots away from $r=0$;
$f'(r) =  4r t^2 (r - r_{+})(r - r_{-})$
where
$$
r_{\pm} \ := \ - \frac{3}{4}\frac{x_1}{t} \ \pm \ \frac{1}{4}
\sqrt{\frac{x_1^2}{t^2} - 8\frac{|x'|^2}{t^2}}.
$$
The constant $c_0 >0$ above is chosen small enough to guarantee
that $r_{\pm}$ are real and $|r_{+} - r_{-}| \sim 1$.
From these observations we find that either $|r(r - r_{+})|\gtrsim 1$ or $|r(r - r_{-})|\gtrsim 1$ so that $|f'(r)|\le |t|$ forces either $|r - r_+| \lesssim |t|^{-1}$ or $|r - r_-| \lesssim |t|^{-1}$ and therefore $|F^1| \lesssim |t|^{-1}$ (if the roots $r_{+}$ and
$r_{-}$ were not separated, we would be stuck with
the bad bound $|F^1| \lesssim |t|^{-1/2}$).
This finishes the case $d=3$.

For the case $d=2$, observe that the above arguments break down 
and in particular the argument reducing to the case where $|x'|/|t|$ is small is not valid when $d=2$.  

{\bf Dimension $d=2$}. It remains to establish the uniform bound $|I(x,t)| \lesssim |t|^{-5/6}$. To do this, we go back to~\eqref{osciint} with the choice $F^1 = E^1$ and $F^2 = E^2$ and write the integral
appearing there as
\begin{equation}\label{osciint2}
\int_{E^2} e^{\pm i\phi(r)} \rho(r) \, {\phi(r)}^{-(d-1)/2} dr
\ = \ \frac{1}{\sqrt{t}} \int_{E^2} e^{\pm i{t\sqrt{g(r)}}} \rho(r) \, {g(r)}^{-1/4} dr    
\end{equation}
where $g(r) := [\phi(r)/t]^2$. Hence  matters are reduced to showing that
\begin{equation}\label{osciint3}
I \ := \  \int_{E^2} e^{\pm i{t\sqrt{g(r)}}} \rho(r) \, {g(r)}^{-1/4} dr
\ = \ O(|t|^{-1/3})    
\end{equation}
holds uniformly in $x$. We begin by reducing the parameters to the region $|x_1| \sim |t|$
and $|x_2| \lesssim |t|$. Recall
that $g(r)/r^2 = [r + x_1/t]^2 + x_2^2/t^2$ and so
\begin{equation}\label{g'}
\frac{g'(r)}{2r} \ = \ \big(r + \frac{x_1}{t}\big) \big(2r + \frac{x_1}{t}\big) \ + \
\frac{x_2^2}{t^2}.
\end{equation}
Hence if $|x_1| \ll |t|$, we have $g'(r) \sim 2r^2 + x_2^2/t^2 \gtrsim 1$
and $g(r) \sim r^2 + x_2^2/t^2$ on the support of $\rho$. We
write
\begin{equation}\label{osciint4}
I=\pm\frac{2}{it}\int_{E^2}  \frac{d}{dr}\bigl[e^{\pm i{t\sqrt{g(r)}}}\bigr] \rho(r) \, \frac{{g(r)}^{1/4}}{g'(r)} dr,
\end{equation}
and breaking up $E^2$ into intervals where ${g}^{1/4}/g'$ is monotonic as before, this yields the bound $I = O(|t|^{-1})$ via integration by parts, noting that $|{g}^{1/4}/g'|\lesssim 1.$

 Next, if $|x_1| \gg |t|$, we have
$g'(r), g(r) \sim (x_1^2 + x_2^2)/t^2 \gtrsim 1$ which again
leads to the bound $I = O(|t|^{-1})$. Finally if $|x_1| \sim |t|$
and $|x_2| \gg |t|$, we have $g'(r), g(r) \sim x_2^2/t^2 \gtrsim 1$, again
leading to the bound $I = O(|t|^{-1})$,  all of which are better than
the claimed bound for $I$ in \eqref{osciint3}.

Therefore we may assume that $|x_1| \sim |t|$ and $|x_2| \lesssim |t|$
(recall that when $d\ge 3$, the
reduction to $|x'| \gtrsim |t|$ was straightforward -- this is
not the case when $d=2$).
Recall that $E^2 := \{1/2 \leq r \leq 2 : |r + x_1/t| \ge 1/|t| \}$. 
If $\Phi(r) := \sqrt{g(r)}$, then $\Phi'(r) = g'(r)/2\sqrt{g(r)}$ and
$$
2 \Phi''(r) \ = \ \frac{g''(r) g(r) + [g'(r)]^2/2}{g(r)^{3/2}}
$$
where $g'(r)$ is given in \eqref{g'} and 
\begin{equation}\label{roots-2nd}
g''(r)/2 = r^2 + x_2^2/t^2 + 5 (r + x_1/t) (r + x_1/5t) \ = \ 6 (r - r_{+}) ( r - r_{-}).
\end{equation}
Here the roots $r_{\pm}$ may be complex but when $|x_2| \ll |t|$, the roots are real and 

\begin{equation*}
   \Big|r_{\pm}  -\Big( -\frac{x_1}{2t} \pm \frac{1}{\sqrt{3}}\cdot\frac{|x_1|}{2|t|}\Big)\Big| \ll 1. 
\end{equation*}

We split $E^2 = E^2_1 \cup E^2_2$ where 
\begin{equation*}
    E^2_1 := \{ 1/2 \leq r  \leq 2 : 1/|t| \le | r + x_1/t| \le \epsilon_0\}
\end{equation*}
for some small $\epsilon_0>0$ and decompose $I = I_1 + I_2$ accordingly.
Hence on $E^2_1$, $g''(r) \sim 1$ if $\epsilon_0$ is suffficently small (less than $1/100$ is enough). 
To analyse $I_1$ we consider two cases: $|x_2| \sim |t|$ and $|x_2| \ll |t|$.  In the first case, provided $\epsilon_0 > 0$ is sufficiently small, both $g(r), g'(r) \sim 1$
and so an integration by parts argument, as shown in \eqref{osciint4}, shows $|I_1| \lesssim |t|^{-1}$ as before. 
In the second case,  fix a small positive (absolute) 
$\delta>0$ such that $|x_2| \le \delta |t|$. In this case, we
split $E^2_1 = E^2_{1,1} \cup E^2_{1,2}$ further where $E^2_{1,1} := \{ r \in E^2_1 :
|r + x_1/t|^2 \le \epsilon x_2^2/t^2 \}$ for some small $\epsilon > 0$ and write $I_1 = I_{1,1} + I_{1,2}$
accordingly. 

On $E^2_{1,1}$, 
we have $g(r) \sim x_2^2/t^2$ and, by the definition of $E_1^2$, $$
|g'(r)| \le 10 |r + x_1/t| + x_2^2/t^2.
$$
The latter implies that $g'(r)^2 \le 10^{-2} x_2^2/t^2$
if $\epsilon, \delta >0$ are small enough. Therefore 
$$
|\Phi''(r) | \gtrsim \ 1/\sqrt{g(r)} \ \sim \ |t|/|x_2|
$$ 
on $E^2_{1,1}$. Since the amplitude $\rho(r)/g(r)^{1/4}$ in \eqref{osciint2} is $ \sim |t|^{1/2}/|x_2|^{1/2}$ on $E^2_{1,1}$,
an application of van der Corput's lemma, together with
an integration by parts argument (see for example \cite[pp. 234]{Stein}),  gives the bound
$|I_{1,1}| \lesssim |t|^{-1/2}$.

On $E^2_{1,2}$, $g(r) \sim |r + x_1/t|^2$ and 
$$
|g'(r)| \ \gtrsim \ |r+ x_1/t| - 10 \epsilon^{-1} |r + x_1/t|^2 \gtrsim \ |r + x_1/t|
$$
if we choose $\epsilon_0 \le 10^{-2} \epsilon$. Hence by integrating by parts, as in \eqref{osciint4}, we have
$$
|I_{1,2}|  \ \lesssim \ |t|^{-1} \max_{r \in E^2} |r + x_1/t|^{-1/2} \ \lesssim \ |t|^{-1/2}
$$
which, together with our bound on $I_{1,1}$,  implies $|I_1| \lesssim |t|^{-1/2}$ in this second case.
Hence in either case, we have $I_1 = O(|t|^{-1/2})$. 

To estimate $I_2$, we note that $g(r) \sim 1$ on $E^2_2$. 
By splitting the set $E^2_2$ into two parts where $g'(r)$
is respectively small and large, one can easily take
care of the part where $g'(r)$ is bounded away from zero
by integrating by parts (recall now $g(r) \sim 1$) as in \eqref{osciint4}, obtaining
a contribution of $O(|t|^{-1})$ for this part of the  integral $I_2$.
Hence we may assume that on $E^2_2$, we have the further restriction
that $|g'(r)| \le \eta$ for some small absolute constant $\eta>0$.

To understand
the size of $g''(r)$, we
split $E^2_2 = E^2_{2,1} \cup E^2_{2,2}$ where
$$E^2_{2,1} := \{ r \in E^2_2 : |r + x_1/2t| \le \epsilon_0\}$$ and
write $I_2 = I_{2,1} + I_{2,2}$ accordingly. On $E^2_{2,1}$, since  $|g'(r)| \le \eta$, we see from \eqref{g'} that
$|x_2|^2/t^2 \le 10(\epsilon_0 + \eta)$. Hence by \eqref{roots-2nd}, we see that $|g''(r)| \sim 1$ on $E^2_{2,1}$.
Therefore, since $g \sim 1$ on $E^2_2$, we see that $|\Phi''(r)| \gtrsim 1$ on $E^2_{2,1}$ and now an application of van der Corput's lemma as before  shows that $|I_{2,1}| \lesssim |t|^{-1/2}$.

Finally we turn to $I_{2,2}$ where we are integrating
over $E^2_{2,2}$ and in particular we have $|g'''(r)| \ge 24 \epsilon_0$ since
$g'''(r) = 24 (r + x_1/2t)$. We also have
$|g'(r)| \le \eta$ and $g(r) \sim 1$ for $r\in E^2_2$ and so we compute
$$
\frac{d^3}{dr^3}\sqrt{g(r)} \ = \ 
\frac{g'''(r)g(r)^2 - (3/2)g'(r)[g''(r)g(r) - (1/2)g'(r)^2]}{2g(r)^{5/2}}
$$
and deduce the bound $|d^3/dr^3 \sqrt{g(r)}| \gtrsim 1$ on $E^2_{2,2}$
if $\epsilon_0$ and $\eta$ are chosen so that $\eta \ll \epsilon_0$. 
Hence a final application of van der Corput's lemma, together with
an integration by parts argument,  gives the bound $|I_{2,2}| \lesssim |t|^{-1/3}$
which implies that $|I_2| \lesssim |t|^{-1/3}$ and this completes the proof of the claim $I = O(|t|^{-1/3})$.
\end{proof}

\section{Proof of Theorem~\ref{main}}

A dispersive estimate of the following type with $d=1$ was proven by Ponce and Vega~\cite[Lemma 2.4]{PV} with decay of the order $|t|^{-(1/2-1/p)}$.  Thus one could argue that we have more dispersion in the higher dimensional cases, however the decay rate does not continue to increase with the dimension. This is somehow consistent with the notion that the higher dimensional equation continues to model unidimensional behaviour. 

\begin{prop}\label{disp}
Let $d\ge 3$ and $2\le p<\infty$. Then there is a constant $C\equiv C(d,p)$ such that
$$
\big\|e^{t\mathcal{R}_1\Delta} f\big\|_{L^p(\R^{d})}\le C|t|^{-2(\frac{1}{2}-\frac{1}{p})}\big\|(-\Delta)^{(d-2)(\frac{1}{2}-\frac{1}{p})} f\big\|_{L^{p'}\!(\R^d)}.
$$
Moreover, with $d=2$, 
$$
\big\|e^{t\mathcal{R}_1\Delta} f\big\|_{L^p(\R^{2})}\le C|t|^{-\frac{5}{3}(\frac{1}{2}-\frac{1}{p})}\big\|(-\Delta)^{(d-\frac{5}{3})(\frac{1}{2}-\frac{1}{p})} f\big\|_{L^{p'}\!(\R^2)}.
$$
Both rates of decay are optimal.
\end{prop}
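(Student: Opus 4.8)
The plan is to combine Proposition~\ref{thelemma} with the trivial $L^2\to L^2$ bound through a frequency-localized interpolation, and then reassemble the dyadic pieces using Littlewood--Paley theory. Fix a homogeneous Littlewood--Paley family $\{P_N\}_{N\in 2^{\Z}}$ adapted to the annuli $\{|\xi|\sim N\}$, and let $\tilde P_N$ be a slightly fattened version whose symbol equals $1$ on the support of the symbol of $P_N$. Since the multiplier $e^{it\xi_1|\xi|}$ of $e^{t\mathcal{R}_1\Delta}$ has modulus $1$, each $P_N e^{t\mathcal{R}_1\Delta}$ is bounded on $L^2$ uniformly in $N$ and $t$. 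On the other hand, $P_N e^{t\mathcal{R}_1\Delta}$ is convolution with the kernel $K_{N,t}(x)=(2\pi)^{-d}\int \psi(|\xi|/N)\,e^{i[t\xi_1|\xi|+x\cdot\xi]}\,d\xi$; rescaling $\xi=N\eta$ and applying Proposition~\ref{thelemma} with parameters $N^2 t$ and $Nx$ yields
\[
\|K_{N,t}\|_{L^\infty}\ \lesssim\ N^{d}(N^2|t|)^{-1}=N^{d-2}|t|^{-1}\quad(d\ge3),\qquad \|K_{N,t}\|_{L^\infty}\ \lesssim\ N^{2}(N^2|t|)^{-5/6}=N^{1/3}|t|^{-5/6}\quad(d=2),
\]
and hence the corresponding $L^1\to L^\infty$ bounds for $P_N e^{t\mathcal{R}_1\Delta}$.

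Interpolating (Riesz--Thorin) these $L^1\to L^\infty$ bounds with the $L^2\to L^2$ bound, for $2\le p<\infty$ and $\theta:=2(\tfrac12-\tfrac1p)$ we obtain $\|P_N e^{t\mathcal{R}_1\Delta}h\|_{L^p}\lesssim |t|^{-2(\frac12-\frac1p)}N^{2s}\|h\|_{L^{p'}}$ for every $h$, where $s:=(d-2)(\tfrac12-\tfrac1p)$ when $d\ge3$ and $s:=(d-\tfrac53)(\tfrac12-\tfrac1p)$ when $d=2$ (the exponent of $N$ being $(d-2)\theta$, resp. $\tfrac13\theta$). Since the multipliers commute and $P_N e^{t\mathcal{R}_1\Delta}=P_N e^{t\mathcal{R}_1\Delta}\tilde P_N$, applying this with $h=\tilde P_N f$ gives the frequency-localized dispersive estimate
\[
\|P_N e^{t\mathcal{R}_1\Delta}f\|_{L^p}\ \lesssim\ |t|^{-2(\frac12-\frac1p)}\,N^{2s}\,\|\tilde P_N f\|_{L^{p'}}.
\]

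To sum in $N$, first use that $p\ge2$: the (reverse) Littlewood--Paley inequality followed by Minkowski's inequality in $\ell^2$ gives $\|e^{t\mathcal{R}_1\Delta}f\|_{L^p}\lesssim \big\|\big(\sum_N |P_N e^{t\mathcal{R}_1\Delta}f|^2\big)^{1/2}\big\|_{L^p}\le \big(\sum_N \|P_N e^{t\mathcal{R}_1\Delta}f\|_{L^p}^2\big)^{1/2}$. Inserting the frequency-localized bound and then using, since $p'\le2$, Minkowski's inequality in the reverse sense together with the Littlewood--Paley characterization of the homogeneous Sobolev space $\dot W^{s,p'}$,
\[
\Big(\sum_N N^{4s}\|\tilde P_N f\|_{L^{p'}}^2\Big)^{1/2}\ \le\ \Big\|\Big(\sum_N N^{4s}|\tilde P_N f|^2\Big)^{1/2}\Big\|_{L^{p'}}\ \sim\ \|(-\Delta)^{s}f\|_{L^{p'}},
\]
which delivers both claimed inequalities. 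For optimality, note that a faster rate of decay would, through the Tomas-type $TT^{*}$ argument of Section~4, force the Strichartz estimates of Theorem~\ref{main} to hold on a strictly larger range of exponents, which is impossible.

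\textbf{Main obstacle.} The delicate point is precisely the passage from the frequency-localized estimate to the global one. One cannot simply sum the $L^p$-norms of the Littlewood--Paley pieces, since that series diverges (each dyadic block contributes a comparable amount), and this is exactly why the endpoint $p=\infty$ is excluded. The fix is to run the square-function inequality at \emph{both} ends, exploiting $p\ge2$ on the output side and $p'\le2$ on the input side, and to absorb the resulting $\ell^2$-sum of frequency pieces via the Littlewood--Paley description of the Sobolev norm; this also handles the low-frequency contributions (where the homogeneous weight $|\xi|^{2s}$ is harmless) without any separate argument. Verifying the precise constants in the rescaling step and the two Minkowski inequalities is routine once this structure is in place.
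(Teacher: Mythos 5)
Your proposal is correct and follows essentially the same route as the paper: a Littlewood--Paley decomposition, a frequency-localized $L^{p'}\to L^p$ bound obtained by interpolating the $L^2$ bound with the $L^1\to L^\infty$ kernel estimate from Proposition~\ref{thelemma} (the paper reduces to $k=0$ by scale invariance rather than rescaling the kernel, which is the same computation), and reassembly via square functions on both the $L^p$ and $L^{p'}$ sides. The optimality argument is also the one the paper gives.
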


\begin{proof} The loss of Sobolev regularity is necessary given the decay rates, by scaling.  The decay rates cannot be improved as otherwise the estimates of Theorem~\ref{main} would hold in larger ranges, which is not possible. 

To prove the estimates, we consider $\psi:\mathbb{R}\to \mathbb{R}$ a  Schwartz function supported in~$[1/2,2]$ for which $\sum_{k\in\mathbb{Z}} \psi(2^{-k} |\cdot|)=1$. Defining the projection operators~$P_k$  by 
\begin{equation}\label{littp}
  P_k f := \big(\psi(2^{-k} |\cdot|)\widehat{f}\,\big)^\vee,  
\end{equation}
the Littlewood--Paley inequality yields
\begin{align*}
\| e^{t\mathcal{R}_1\Delta} f\|_{L^p(\R^{d})}&\le C \Big\| \Big(\sum_{k\in\mathbb{Z}} |e^{t\mathcal{R}_1\Delta}P_k f|^2\Big)^{1/2}\Big\|_{L^p(\R^{d})}\\
&\le C \Big(\sum_{k\in\mathbb{Z}} \big\|e^{t\mathcal{R}_1\Delta}P_k f\big\|^2_{L^p(\R^{d})}\Big)^{1/2}, 
\end{align*}
where in the final estimate we use the triangle inequality in $L^{p/2}$. Thus, with $d\ge3$, it will suffice to prove that, for all $k\in \mathbb{Z}$,
\begin{equation}\label{this}
\| e^{t\mathcal{R}_1\Delta}P_k f\|_{L^p(\R^{d})}\le C|t|^{-2(\frac{1}{2}-\frac{1}{p})}2^{(d-2)(1-\frac{2}{p})k}\|f\|_{L^{p'}\!(\R^d)}.
\end{equation}
Then, with $\widetilde{P}_k$ defined in a similar way as $P_k$ but with the cut-off function equal to one on the support of $\psi$, we would obtain
\begin{align*}
\| e^{t\mathcal{R}_1\Delta} f\|_{L^p(\R^{d})}&\le C|t|^{-2(\frac{1}{2}-\frac{1}{p})}\Big(\sum_{k\in\mathbb{Z}} 2^{(d-2)(1-\frac{2}{p})k}\|\widetilde{P}_kf\|^2_{L^{p'}\!(\R^d)}\Big)^{1/2}\\
&\le C|t|^{-2(\frac{1}{2}-\frac{1}{p})}\Big\|\Big(\sum_{k\in\mathbb{Z}} 2^{(d-2)(1-\frac{2}{p})k}|\widetilde{P}_kf|^2\Big)^{1/2}\Big\|_{L^{p'}\!(\R^d)},
\end{align*}
and the desired inequality would follow by a further Littlewood--Paley application.

By scaling invariance, it will suffice to prove~\eqref{this} with $k=0$. That is to say, an estimate for the operator $T$ defined by 
\begin{align*}
Tf(x,t)&:= \int_{\R^d} \psi(|\xi|)\widehat{f} (\xi)\, e^{i[t\xi_1|\xi| +x\cdot\xi]}d\xi
\end{align*}
with data $f\in L^{p'}\!(\R^d)$.  
By Plancherel's theorem, we have that 
\begin{equation}\label{l2}\|Tf(\cdot,t)\|_2\le C\,\|f\|_2.\end{equation} 
On the other hand, recalling that the Fourier transform is defined by $$\widehat{f}(\xi):=\int_{\R^d} f(y)\, e^{-iy\cdot\xi}\, dy,$$ by Fubini's theorem, we can write
\begin{equation}\label{bla}
Tf(x,t)=\int_{\R^d} f(y) \int_{\R^d} \psi(|\xi|)\, e^{i[t\xi_1|\xi| +(x-y)\cdot\xi]}d\xi \,dy.
\end{equation}
Applying Lemma~\ref{thelemma}, we obtain 
$$
\Big|\int_{\R^d} \psi(|\xi|) e^{i[t\xi_1|\xi| +(x-y)\cdot\xi]}d\xi\Big| \le C|t|^{-1}.
$$
Substituting into~\eqref{bla}, this yields a dispersive estimate, 
\begin{equation*}
\|Tf(\cdot,t)\|_{L^\infty(\R^d)}\le C|t|^{-1}\|f\|_{L^1(\R^d)}, 
\end{equation*}
so that by Riesz--Thorin interpolation with our $L^2$ estimate~\eqref{l2}, we obtain
\begin{equation}\label{dis}
\|Tf(\cdot,t)\|_{L^p(\R^d)}\le \frac{C}{|t|^{1-2/p}}\|f\|_{L^{p'}\!(\R^d)},\qquad p\ge 2, 
\end{equation}
which is~\eqref{this} with $k=0$, and so we are done.

When $d=2$, the same argument applies. In that case, the estimate~\eqref{dis} takes the form
$$
\|Tf(\cdot,t)\|_{L^p(\R^2)}\le \frac{C}{|t|^{\frac{5}{6}(1-2/p)}}\|f\|_{L^{p'}\!(\R^2)},\qquad p\ge 2, 
$$
and so~\eqref{this} is changed accordingly.
\end{proof}

We now continue with the proof of the main Theorem~\ref{main}.

\begin{proof}[Proof of Theorem~\ref{main}]
First we calculate the necessary condition when $d\ge 3$. By the change of variables $\xi\to R\xi$, $x\to R^{-1} x$ and $t\to R^{-2} t$, then letting $R$ tend to zero and infinity, one can calculate that it is necessary that $s=d(\frac{1}{2}-\frac{1}{q})-\frac{2}{r}$ for such an estimate to hold.  
Now define $f$ by $$\widehat{f}(\xi):=\chi_{[0,R^{-1}]}(\xi_1)\chi_{[0,1]}(\xi_2)\ldots\chi_{[0,1]}(\xi_d),$$ so that 
\begin{equation}\label{one}
\| f\|_{\dot{H}^{s}}\le cR^{-1/2}. 
\end{equation}
On the other hand,
$$
e^{t\mathcal{R}_1\Delta} f(x)=\frac{1}{(2\pi)^d} \int_{0}^{R^{-1}}\!\!\! \int_{0}^1\ldots \int_{0}^1 e^{it|\xi|\xi_1}e^{i[x_1\xi_1+\ldots + x_d\xi_d]}\,d\xi_1\ldots d\xi_d.
$$
We see that as long as $|t|,|x_1|\le \frac{R}{2d}$ and $|x_2|,\ldots,|x_d|\le \frac{1}{2d}$, then 
$$
|e^{t\mathcal{R}_1\Delta} f(x)|\ge \frac{1}{(2\pi)^d} \int_{0}^{R^{-1}}\!\!\! \int_{0}^1\ldots \int_{0}^1 \cos (1)\, d\xi_1\ldots d\xi_d\ge c R^{-1}.
$$
Thus, 
\begin{equation}\label{two}
\| e^{t\mathcal{R}_1\Delta} f\|_{L_t^{r}(\R,L_x^q(\R^{d}))}\ge c R^{1/q+1/r-1}
\end{equation}
and so by comparing~\eqref{one} and~\eqref{two} and letting $R$ tend to infinity, we see that the
condition $\frac{2}{q} + \frac{2}{r} \le 1$ is necessary.

 With $d=2$, the condition $\frac{10}{q} + \frac{12}{r} \leq 5$ is deduced via an example of a different nature. It will be useful to let $\Sigma$ denote the phase function $\Sigma \colon \xi \mapsto \xi_1|\xi|$. This map has an associated hessian given by
\begin{equation*}
\partial_{\xi\xi}^2\Sigma(\xi) = \frac{1}{|\xi|^3}
\begin{bmatrix}
2 \xi_1^3 + 3 \xi_1 \xi_2^2 & \xi_2^3 \\
\xi_2^3 & \xi_1^3
\end{bmatrix}
\end{equation*}
and, moreover,
\begin{equation*}
    \det \partial_{\xi\xi}^2\Sigma(\xi) = \frac{2 \xi_1^2 - \xi_2^2}{|\xi|^2}.
\end{equation*}
Fix $\delta >0$ and some $\zeta \in \R^2$ bounded away from 0 satisfying $2 \zeta_1^2 = \zeta_2^2$ so that $\det\partial_{\xi\xi}^2\Sigma(\zeta) = 0$. Define the matrices
\begin{equation*}
    A := \frac{1}{(\zeta_1^6 + \zeta_2^6)^{1/2}}
    \begin{bmatrix}
    \zeta_2^3 & -\zeta_1^3 \\
    \zeta_1^3 & \zeta_2^3
    \end{bmatrix}, \qquad   D_{\delta} := \begin{bmatrix}
    \delta^{1/2} & 0 \\
    0 & \delta^{1/3}
    \end{bmatrix}.
\end{equation*}
The columns of $A$ form an orthonormal basis of eigenvectors for $\partial_{\xi\xi}^2\Sigma(\zeta)$ with corresponding eigenvalues $\lambda := 3\zeta_1(\zeta_1^2 + \zeta_2^2)$ and $0$. Thus, $A^{\!\top}\partial_{\xi\xi}^2\Sigma(\zeta)A$ is a rank 1 diagonal matrix with top-left entry equal to $\lambda$. 

Take $\vartheta$ to be a non-negative Schwartz function on $\R^2$ with support in the unit ball and $\vartheta(0) \neq 0$. Let $f$ be defined by 
\begin{equation*}
    \widehat{f}(\xi) := \vartheta \circ D_{\delta}^{-1}A^{-1}(\xi - \zeta)
\end{equation*} 
so that 
\begin{equation}\label{2d sharp 1}
    \|f\|_{\dot{H}^{s}} \leq c \delta^{5/12}.
\end{equation}
By a change of variables,
\begin{align*}
    e^{t\mathcal{R}_1\Delta} f(x) &= \frac{\delta^{5/6}}{(2\pi)^2} \int_{\R^2} \vartheta(\xi)e^{it\Sigma(\zeta + A D_{\delta}\xi)} e^{ix \cdot (\zeta + A D_{\delta}\xi)} d \xi \\
    &= \frac{\delta^{5/6}}{(2\pi)^2}e^{it\Sigma(\zeta)} e^{ix \cdot \zeta} \int_{\R^2} \vartheta(\xi)e^{i t (\lambda \delta\xi_1^2/2 + E_{\delta}(\xi))} e^{iD_{\delta}A^{\!\top}(x+\partial_{\xi}\Sigma(\zeta)) \cdot \xi} d \xi
\end{align*}
where, by Taylor's theorem,
\begin{equation*}
    E_{\delta}(\xi) := \Sigma(\zeta + A D_{\delta}\xi) - \Sigma(\zeta) - \partial_{\xi}\Sigma(\zeta)\cdot A D_{\delta}\xi - \tfrac{1}{2}\partial_{\xi\xi}^2\Sigma(\zeta) A D_{\delta}\xi \cdot A D_{\delta}\xi
\end{equation*}
satisfies $|E_{\delta}(\xi)| = O(\delta)$ on the support of $\vartheta$. Here we have used the properties of $A$ to write the final term in the definition of $E_{\delta}(\xi)$ as $\lambda \delta \xi_1^2/2$. If $c_0 > 0$ is a suitably small (universal) constant, then it follows that $|e^{t\mathcal{R}_1\Delta} f(x)| \gtrsim \delta^{5/6}$ whenever 
\begin{equation*}
    |t| \leq c_0 \delta^{-1} \quad \textrm{and} \quad |D_{\delta}A^{\!\top}(x+\partial_{\xi}\Sigma(\zeta))| \leq c_0.
\end{equation*}
Consequently,
\begin{equation}\label{2d sharp 2}
    \|e^{t\mathcal{R}_1\Delta} f\|_{L_t^{r}(\R,L_x^q(\R^{2}))} \geq c \delta^{5/6(1-1/q) - 1/r}
\end{equation}
and the remaining necessary condition follows by comparing \eqref{2d sharp 1} and \eqref{2d sharp 2} and letting $\delta$ tend to zero.

To prove the estimates,  the argument is completed using a variant of the $TT^*$ argument of Tomas~\cite{T}.  We consider first the estimates on the sharp line $\frac{2}{q}+\frac{2}{r}=1$ for $d \geq 3$. Note that the $r=\infty$ estimate is a consequence of~\eqref{l2}, so we can suppose $r$ is finite.
By duality it will suffice to prove
$$
\Big\|(-\Delta)^{-s/2}\int e^{-t\mathcal{R}_1\Delta} F(\cdot,t)\,dt \Big\|_{L_x^2(\R^d)}\le
C\,\|F\|_{L_t^{r'}L_x^{q'}},
$$
where $s=d(\frac{1}{2}-\frac{1}{q})-\frac{2}{r}$, which  is equivalent to
\begin{equation*}\label{bil} \int\!\!\int\Big\langle
(-\Delta)^{-s}e^{-t\mathcal{R}_1\Delta} F(\cdot,\tau),e^{-\tau\mathcal{R}_1\Delta}
G(\cdot,t)\Big\rangle_{\!x}dtd\tau\le
C\,\|F\|_{L_t^{r'}L_x^{q'}}\|G\|_{L_t^{r'}L_x^{q'}}.
\end{equation*}
This can be rewritten as
\begin{equation*}\label{TTstar}
 \int  \Big\langle
\int (-\Delta)^{-s}e^{-(t-\tau)\mathcal{R}_1\Delta} F(\cdot,\tau)\,d\tau,
G(\cdot,t)\Big\rangle_{\!x}\,dt\le C\,\|F\|_{L_t^{r'}L_x^{q'}}\|G\|_{L_t^{r'}L_x^{q'}}, 
\end{equation*}
which would follow, by H\"older's inequality, from
\begin{equation*}\label{TTstar2}
\Big\|\int (-\Delta)^{-s}e^{-(t-\tau)\mathcal{R}_1\Delta} F(\cdot,\tau)\,d\tau\Big\|_{L_t^{r}L_x^{q}}\le
C\,\|F\|_{L_t^{r'}L_x^{q'}}\end{equation*}
Noting that on the sharp line $s=(d-2)(\frac{1}{2}-\frac{1}{q})$, this is a consequence of Minkowski's integral inequality, the decay estimate of Proposition~\ref{disp},  and the one dimensional Hardy--Littlewood--Sobolev inequality (see for example \cite[Proposition 7.8]{MS});
\begin{align*}
\Big\|\int (-\Delta)^{-s}e^{-(t-\tau)\mathcal{R}_1\Delta} F(\cdot,\tau)\,d\tau\Big\|_{L_t^{r}L_x^{q}}&\le \Big\|\int \big\|(-\Delta)^{-s}e^{-(t-\tau)\mathcal{R}_1\Delta} F(\cdot,\tau)\big\|_{L_x^{q}}\,d\tau\Big\|_{L^r_t}\\
&\le \Big\|\int \frac{\|F(\cdot,\tau)\|_{L_x^{q'}}}{|t-\tau|^{1-\frac{2}{q}}}\,d\tau\Big\|_{L^r_t}\\&
\le C\,\|F\|_{L_t^{r'}L_x^{q'}}.
\end{align*}
For the application of the Hardy--Littlewood--Sobolev inequality we require that $0<1-\frac{2}{q}=\frac{2}{r}<1$, which follows from the fact that $q$ and $r$ are finite. 

Finally, in order to take $q$ larger than those on the sharp line $\frac{1}{2}=\frac{1}{q}+\frac{1}{r}$,  we can apply the Hardy--Littlewood--Sobolev inequality (see \cite[Corollary 7.9]{MS}), in the form 
$$
\|e^{t\mathcal{R}_1\Delta} f\|_{L^{\tilde{q}}(\R^d)}\le C\,\|(-\Delta)^{s/2}e^{t\mathcal{R}_1\Delta} f\|_{L^{q}(\R^d)},\quad s=d\big(\tfrac{1}{q}-\tfrac{1}{\tilde{q}}\big),
$$
and so the proof for $d\ge3$ is complete. 

When $d=2$, the only thing that changes from above is that we initially consider estimates on the line $\frac{10}{q}+\frac{12}{r}=5$, so that $s=d(\frac{1}{2}-\frac{1}{q})-\frac{2}{r}$ becomes $$s=(d-5/3)\big(\tfrac{1}{2}-\tfrac{1}{q}\big)$$and the condition $0<1-\frac{2}{q}=\frac{2}{r}<1$ becomes $0<\frac{5}{6}-\frac{10}{6q}=\frac{2}{r}<1$.
This completes the proof.
\end{proof}
 

\section{Proof of Theorem~\ref{imprwellpos}}

After using our linear estimates to prove some {\it a priori }nonlinear estimates, we present the proof of Theorem~\ref{imprwellpos} in the final three subsections. In the first of these three parts we prove uniqueness, using the fact that $u\in L^1([0,T),W^{1,\infty}(\mathbb{R}^d))$. Then, we prove existence using a compactness argument. Finally we deduce continuous dependence.


\subsection{Linear estimates}

By an application of Sobolev embedding in the spatial variable and an application of H\"older's inequality in the time variable, our Strichartz estimates of Theorems~\ref{main} yield the following corollary. 

\begin{cor}\label{liestCoro}
Let $s>s_d-3/2$ where $s_d := d/2+1/2$ for $d\ge 3$ and $s_2 := 5/3$.  Then 
\begin{equation*}
    \Big(\int_0^1\left\|e^{t\mathcal{R}_1 \Delta}f\right\|^2_{L^{\infty}}dt\Big)^{1/2}\le C_{s}\left\|f\right\|_{H^s}.
\end{equation*}
\end{cor}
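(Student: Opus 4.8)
The plan is to deduce Corollary~\ref{liestCoro} from Theorem~\ref{main} by combining a Sobolev embedding in the spatial variable with H\"older's inequality in time, being careful to spend the scaling gain efficiently so that one lands at the advertised regularity $s > s_d - 3/2$. First I would fix a finite Lebesgue exponent $q$ large enough that $L^q(\R^d)$ barely fails to control $L^\infty(\R^d)$, and choose a time exponent $r$ for which the pair $(q,r)$ lies in the admissible range of Theorem~\ref{main}. Concretely, by Sobolev embedding $\|g\|_{L^\infty(\R^d)} \le C \|g\|_{W^{\sigma,q}(\R^d)}$ whenever $\sigma > d/q$, so that for a fixed time slice
\begin{equation*}
\|e^{t\mathcal{R}_1\Delta} f\|_{L^\infty_x} \le C \|e^{t\mathcal{R}_1\Delta} J^{\sigma} f\|_{L^q_x},
\end{equation*}
and then the homogeneous Strichartz estimate applied to $J^\sigma f$ (after splitting into low and high frequencies to pass between homogeneous and inhomogeneous norms) gives, for $r$ finite,
\begin{equation*}
\Big(\int_\R \|e^{t\mathcal{R}_1\Delta} J^\sigma f\|_{L^q_x}^r\, dt\Big)^{1/r} \le C\,\|J^{\sigma+s_*} f\|_{L^2} = C\,\|f\|_{H^{\sigma + s_*}},
\end{equation*}
where $s_*$ is the Strichartz exponent $d(\tfrac12-\tfrac1q)-\tfrac2r$ for $d\ge3$, and $1-\tfrac2q-\tfrac2r$ for $d=2$.

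Next I would convert the $L^r_t$ bound on all of $\R$ into the $L^2_t([0,1])$ bound that appears in the statement: since $r \ge 2$ can be arranged (indeed $r$ can be taken as close to $2$ as we like by the admissibility condition, at the cost of taking $q$ correspondingly large), H\"older's inequality on the finite interval $[0,1]$ gives
\begin{equation*}
\Big(\int_0^1 \|e^{t\mathcal{R}_1\Delta} f\|_{L^\infty_x}^2\, dt\Big)^{1/2} \le \Big(\int_0^1 \|e^{t\mathcal{R}_1\Delta} f\|_{L^\infty_x}^r\, dt\Big)^{1/r} \le C\,\|f\|_{H^{\sigma + s_*}}.
\end{equation*}
The whole game is then to choose $q$ (hence the forced $r$ on the sharp line, or slightly off it) so that the total regularity $\sigma + s_* = d/q + s_*$ can be made as close as desired to $s_d - 3/2$. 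On the sharp line $\tfrac2q + \tfrac2r = 1$ for $d\ge3$ one has $s_* = (d-2)(\tfrac12-\tfrac1q)$ and $\tfrac1r = \tfrac12 - \tfrac1q$, so letting $q\to\infty$ (equivalently $r\to2$) makes $\sigma = d/q \to 0$ while $s_* \to (d-2)/2$, giving total regularity $\to (d-2)/2 = s_d - 3/2$; taking $q$ finite but large yields any $s$ strictly above this. For $d=2$ the analogous computation on the line $\tfrac{10}{q}+\tfrac{12}{r}=5$ gives $s_* = (d - 5/3)(\tfrac12-\tfrac1q) = \tfrac13(\tfrac12-\tfrac1q)$, and as $q\to\infty$ the regularity tends to $\tfrac16 = 5/3 - 3/2 = s_2 - 3/2$, as required.

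The main obstacle, such as it is, is bookkeeping rather than a genuine difficulty: one must verify that the pair $(q,r)$ with $r$ close to $2$ and $q$ large genuinely lies in the admissible region of Theorem~\ref{main} (for $d=2$ this means checking $\tfrac{10}{q}+\tfrac{12}{r}\le5$ is compatible with $r$ near $2$, which forces $q$ to be taken large — it is, since $12/r \to 6 < 5$ fails, so in fact one needs $r$ bounded below by $12/5 \cdot$ something; the correct statement is that one takes $r$ slightly larger than $12/5$ and $q$ large, and still $\sigma+s_*$ approaches $5/3-3/2=1/6$ from above), and that the passage from the homogeneous Sobolev norm in Theorem~\ref{main} to the inhomogeneous $H^s$ norm in the corollary is handled by the usual Littlewood--Paley splitting: for high frequencies $\|f\|_{\dot H^{\sigma+s_*}} \lesssim \|f\|_{H^{\sigma+s_*}}$ directly, while for low frequencies one uses instead a cruder exponent (e.g. Bernstein's inequality plus the trivial $L^2$ bound~\eqref{l2}, or a second admissible Strichartz pair) to absorb the negative powers of frequency. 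Assembling these pieces and optimising in $q$ gives the stated range $s > s_d - 3/2$.
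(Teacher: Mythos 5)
Your argument is correct and is exactly the route the paper takes: the authors prove the corollary in one line by combining Sobolev embedding in $x$ with H\"older in $t$ applied to Theorem~\ref{main}, optimising the admissible pair $(q,r)$ with $q$ large (and $r\to 2$ for $d\ge 3$, $r\to 12/5$ for $d=2$) so that the total regularity $d/q+s_*$ approaches $s_d-3/2$. Your bookkeeping of the sharp lines and the homogeneous-to-inhomogeneous passage (which in fact only needs $\|f\|_{\dot H^a}\le\|f\|_{H^a}$ for $a\ge 0$) is accurate.
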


This will be a key ingredient in the proof of the following inhomogeneous Strichartz estimate.

\begin{lemma} \label{refinStri}
Let $s>s_d-1$ where $s_d := d/2+1/2$ for $d\ge 3$ and $s_2 := 5/3$.  Then 
\begin{equation*}
   \int_0^T \left\|w(\cdot,t)\right\|_{L^{\infty}} dt \le C_{s} T^{1/2} \left(\sup_{t\in[0,T]}\left\|w(\cdot,t)\right\|_{H^s}+\int_0^T \left\|F(\cdot,t) \right\|_{H^{s-1}} dt\right)
\end{equation*}
whenever $T\le 1$ and $w$ is a solution to
$ \partial_t w- \mathcal{R}_1 \Delta w= F.$
\end{lemma}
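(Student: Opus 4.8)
The plan is to use the Duhamel (variation of constants) formula to reduce the inhomogeneous estimate to the homogeneous smoothing-type estimate already recorded in Corollary~\ref{liestCoro}. Writing the solution to $\partial_t w - \mathcal{R}_1\Delta w = F$ with data $w(\cdot,0)$ as
\begin{equation*}
    w(\cdot,t) = e^{t\mathcal{R}_1\Delta} w(\cdot,0) + \int_0^t e^{(t-t')\mathcal{R}_1\Delta} F(\cdot,t')\,dt',
\end{equation*}
we would estimate $\int_0^T \|w(\cdot,t)\|_{L^\infty}\,dt$ by splitting into the linear part and the Duhamel part. For the linear part, applying the Cauchy--Schwarz inequality in $t$ on $[0,T]$ gives
\begin{equation*}
    \int_0^T \|e^{t\mathcal{R}_1\Delta} w(\cdot,0)\|_{L^\infty}\,dt \le T^{1/2}\Big(\int_0^T \|e^{t\mathcal{R}_1\Delta} w(\cdot,0)\|_{L^\infty}^2\,dt\Big)^{1/2} \le C_s T^{1/2}\|w(\cdot,0)\|_{H^s}
\end{equation*}
by Corollary~\ref{liestCoro} (valid since $s > s_d - 1 > s_d - 3/2$, and $T\le 1$ so the time interval sits inside $[0,1]$), and $\|w(\cdot,0)\|_{H^s} \le \sup_{t\in[0,T]}\|w(\cdot,t)\|_{H^s}$.

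For the Duhamel term, I would first move the $L^\infty$-norm inside and use Minkowski's integral inequality together with Cauchy--Schwarz in the outer time variable:
\begin{equation*}
    \int_0^T \Big\|\int_0^t e^{(t-t')\mathcal{R}_1\Delta} F(\cdot,t')\,dt'\Big\|_{L^\infty}\,dt \le \int_0^T \int_0^T \mathbf{1}_{\{t'<t\}}\|e^{(t-t')\mathcal{R}_1\Delta} F(\cdot,t')\|_{L^\infty}\,dt'\,dt.
\end{equation*}
Exchanging the order of integration, the inner integral in $t$ over $(t',T)\subseteq(t',t'+1)$ is again handled by Cauchy--Schwarz and Corollary~\ref{liestCoro}: for fixed $t'$,
\begin{equation*}
    \int_{t'}^T \|e^{(t-t')\mathcal{R}_1\Delta} F(\cdot,t')\|_{L^\infty}\,dt \le T^{1/2}\Big(\int_0^1 \|e^{\tau\mathcal{R}_1\Delta} F(\cdot,t')\|_{L^\infty}^2\,d\tau\Big)^{1/2} \le C_s T^{1/2}\|F(\cdot,t')\|_{H^s}.
\end{equation*}
Here there is a gain: since the hypothesis is $s > s_d - 1$, we have room to apply Corollary~\ref{liestCoro} with a slightly smaller regularity index, so one should first invoke $\|F(\cdot,t')\|_{H^{s'}}$ for some $s_d - 3/2 < s' < s$; but in fact the statement to be proved only asks for $\|F(\cdot,t')\|_{H^{s-1}}$ on the right, so after applying Corollary~\ref{liestCoro} one must still compare $H^{s'}$ with $H^{s-1}$. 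This is the one genuinely delicate bookkeeping point.

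The main obstacle is therefore the regularity accounting in the Duhamel term: we need the final bound to feature $\|F\|_{H^{s-1}}$, not $\|F\|_{H^{s}}$, so a naive application of Corollary~\ref{liestCoro} to $e^{\tau\mathcal{R}_1\Delta}F(\cdot,t')$ costs one derivative too many. The resolution is that Corollary~\ref{liestCoro} itself has slack: it is stated for all $s > s_d - 3/2$, and here we are assuming $s > s_d - 1 = (s_d - 3/2) + 1/2$, which does not immediately give a full derivative of room — so one must instead revisit the proof of Corollary~\ref{liestCoro} (Sobolev embedding in space plus Hölder in time applied to Theorem~\ref{main}) and observe that the relevant Strichartz estimate with a non-endpoint Lebesgue exponent $q<\infty$ actually allows trading integrability for regularity. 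Concretely, one picks the Strichartz pair so that $s - 1 > s_d - 3/2 - (\text{gain from }q)$, absorbing the extra derivative into the Sobolev embedding step. Once this is set up, integrating the pointwise-in-$t'$ bound $C_s T^{1/2}\|F(\cdot,t')\|_{H^{s-1}}$ over $t'\in[0,T]$ yields the claimed $\int_0^T\|F(\cdot,t)\|_{H^{s-1}}\,dt$ term, and combining with the linear part completes the proof.
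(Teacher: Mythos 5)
Your outline correctly handles the linear part and correctly identifies the danger point, but the resolution you propose for the Duhamel term does not exist, and without it the argument loses half a derivative relative to the statement. Concretely: your scheme bounds the Duhamel contribution by $C T^{1/2}\int_0^T\|F(\cdot,t')\|_{H^{s'}}\,dt'$ for some $s'>s_d-3/2$, so to land on $\|F\|_{H^{s-1}}$ you would need $s-1>s_d-3/2$, i.e.\ $s>s_d-1/2$, whereas the hypothesis is only $s>s_d-1$. The suggested repair --- revisiting Corollary~\ref{liestCoro} and ``trading integrability for regularity'' in the Strichartz pair --- cannot close this gap: the threshold $s_d-3/2$ in Corollary~\ref{liestCoro} is already what one gets by optimising over all admissible pairs (taking $r\to 2^+$ and absorbing the Sobolev embedding cost $d/q$ into the Strichartz regularity $d(\tfrac12-\tfrac1q)-\tfrac2r$), and the paper stresses that the underlying Strichartz estimates of Theorem~\ref{main} are sharp. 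So there is no non-endpoint pair giving $L^2_tL^\infty_x$ control below $H^{s_d-3/2}$, and the global-in-time Duhamel splitting genuinely fails to prove the lemma as stated. Note the full derivative on $F$ is not cosmetic: the application in Lemma~\ref{apriEstS} needs $\|u\partial_{x_1}u\|_{H^{s-1}}\lesssim\|u\|_{W^{1,\infty}}\|u\|_{H^s}$, which would break with $H^{s-1/2}$ in its place.

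The missing idea is the Koch--Tzvetkov/Kenig--K\"onig refinement, which is what the paper uses. One first performs a Littlewood--Paley decomposition $w=\sum_k P_kw$ and proves the estimate for each dyadic piece; for the piece at frequency $2^k$ one splits $[0,T]$ into $2^k$ subintervals $I_m$ of length $\sim 2^{-k}T$ and applies Duhamel \emph{from the left endpoint of each subinterval}, not from $t=0$. Cauchy--Schwarz in time then produces the short factor $(2^{-k}T)^{1/2}$ rather than $T^{1/2}$, and Corollary~\ref{liestCoro} is applied at regularity $s-1/2$ (admissible precisely because $s>s_d-1$). For the homogeneous terms the sum over the $2^k$ endpoints costs a factor $2^k$, and $(2^{-k}T)^{1/2}\cdot 2^k\cdot\|P_kw\|_{H^{s-1/2}}\sim T^{1/2}\|P_kw\|_{H^{s}}$ by frequency localisation; for the forcing terms the sum over $m$ simply reassembles $\int_0^T$, so the surviving factor $2^{-k/2}$ converts $\|P_kF\|_{H^{s-1/2}}$ into $\|P_kF\|_{H^{s-1}}$. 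This frequency-dependent time localisation is exactly the mechanism that gains the extra half derivative your argument is missing, and it cannot be recovered from the global Strichartz estimates alone.
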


\begin{proof}
We again consider the smooth partition of unity $\sum_{k\in\mathbb{Z}} \psi(2^{-k} |\cdot|)=1$ and the Littlewood--Paley projection operators~$P_k$ defined by 
\begin{equation*}
  P_k f:= \big(\psi(2^{-k} |\cdot|)\widehat{f}\,\big)^\vee,\quad k\ge 2.   
\end{equation*}
This time we consider the lower frequencies together; $P_1f:=\sum_{k\le 1} \big(\psi(2^{-k} |\cdot|)\widehat{f}\,\big)^\vee$.  
By the triangle inequality and summing a geometric series, it will suffice to prove that, for all $k\ge 1$ and $s>s_d-1$,  
\begin{equation*}
   \int_0^T \left\|P_kw(\cdot,t)\right\|_{L^{\infty}} dt \le C_{s} T^{1/2} \left(\sup_{t\in[0,T]}\left\|P_kw(\cdot,t)\right\|_{H^s}+\int_0^T \left\|P_kF(\cdot,t) \right\|_{H^{s-1}} dt\right).
\end{equation*}

Following the arguments of~\cite{KenigKo}, we split the interval $[0,T]=\bigcup_m I_m$ into $2^{k}$ intervals $I_m=[a_m,b_m]$ with $(b_m-a_m) \sim 2^{-k}T$ and use the Cauchy--Schwarz inequality to estimate
\begin{equation}\label{pop}
   \int_0^T \left\| P_kw(\cdot,t)\right\|_{L^{\infty}} dt  \lesssim (2^{-k}T)^{1/2} \sum_{m} \Big(\int_{I_m} \left\|P_kw(\cdot,t)\right\|^2_{L^{\infty}} dt\Big)^{1/2}.\end{equation}
Now to estimate the right-hand side, we employ Duhamel's principle on each~$I_m$, so that
\begin{equation*}
    P_kw(\cdot,t)=e^{(t-a_m)\mathcal{R}_1 \Delta}P_kw(\cdot,a_m)+\int_{a_m}^t e^{(t-t')\mathcal{R}_1 \Delta} P_kF(\cdot,t')\, dt'
\end{equation*}
whenever $t\in I_m$.
Then, by an application of Minkowski's integral inequality in order to treat the second term as the first, and applications of Corollary~\ref{liestCoro}, we find that \eqref{pop} can be bounded by
\begin{equation*}
    \begin{aligned}
 &\lesssim (2^{-k}T)^{1/2}\sum_{m}  \left( \left\|P_kw(\cdot,a_m)\right\|_{H^{s-1/2}} + \int_{I_m}  \left\|P_kF(\cdot,t')\right\|_{H^{s-1/2}}\, dt'\right) \\
  &  \lesssim  T^{1/2}  \left( \sup_{t\in [0,T]} \left\|P_kw(\cdot,t)\right\|_{H^{s}}+\int_0^T \left\|P_kF(\cdot,t')\right\|_{H^{s-1}}\, dt'\right),
    \end{aligned}
\end{equation*}
which completes the proof.
\end{proof}


\subsection{Energy estimates}

We will need the Kato--Ponce commutator estimates.

\begin{lemma}\cite{FRAC, KP}\label{conmKP}
Let $s>0$ and write $[J^s,f]g:=J^s(fg)-fJ^sg$. Then
\begin{equation*}
    \big\|[J^s,f]g\big\|_{L^2} \lesssim   \left\|\nabla f\right\|_{L^{\infty}} \left\|J^{s-1}g\right\|_{L^2}+ \left\|J^s f\right\|_{L^2} \left\|g\right\|_{L^{\infty}}.
\end{equation*}
Moreover 
\begin{align*}
     \left\|J^s(fg)\right\|_{L^2} & \lesssim  \left\|J^s f\right\|_{L^{2}} \left\|g\right\|_{L^{\infty}}+ \left\|f\right\|_{L^{\infty}} \left\|J^s g\right\|_{L^{2}}. \label{eqfraLR}
\end{align*}
\end{lemma}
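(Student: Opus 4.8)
These commutator and product estimates are classical: the commutator bound is due to Kato and Ponce \cite{KP}, and the fractional Leibniz rule in the stated form is contained in \cite{FRAC}. For completeness I indicate the proof, which proceeds through a Littlewood--Paley paraproduct decomposition and treats the two inequalities in tandem. Using projections $P_k$ as in \eqref{littp} (with $P_{\le k}:=\sum_{l\le k}P_l$), write
$$
fg \;=\; \Pi(f,g) \;+\; H(f,g) \;+\; \Pi(g,f), \qquad \Pi(f,g) := \sum_{k}(P_k f)\,P_{\le k-3}g, \quad H(f,g) := \sum_{|j-k|\le 2}(P_j f)(P_k g),
$$
so that in $\Pi(f,g)$ the first factor carries the high frequency, the $k$-th term of $\Pi(f,g)$ has Fourier support in an annulus $|\xi|\sim 2^k$, and the $j$-th term of $H(f,g)$ has Fourier support in a ball $|\xi|\lesssim 2^j$.

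\emph{Fractional Leibniz rule.} For the high--low piece, the frequency localisation and Bernstein's inequality give $\|J^s((P_k f)\,P_{\le k-3}g)\|_{L^2}\lesssim 2^{ks}\|P_k f\|_{L^2}\|P_{\le k-3}g\|_{L^\infty}\lesssim 2^{ks}\|P_k f\|_{L^2}\|g\|_{L^\infty}$; by almost orthogonality of the outputs in $k$ and the square-function characterisation of $\|J^s f\|_{L^2}$, summing yields $\|J^s f\|_{L^2}\|g\|_{L^\infty}$. The low--high piece $\Pi(g,f)$ is symmetric and contributes $\|f\|_{L^\infty}\|J^s g\|_{L^2}$, while for $H(f,g)$ one uses Bernstein's inequality together with $s>0$ to sum the (now overlapping) output frequencies and again obtains $\|J^s f\|_{L^2}\|g\|_{L^\infty}$. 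This proves the second inequality.

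\emph{Commutator estimate.} Write $[J^s,f]g = J^s(fg) - fJ^sg$ and decompose $fJ^sg = \Pi(f,J^sg) + H(f,J^sg) + \Pi(J^sg,f)$ in the same way. The terms $J^s\Pi(f,g)$, $J^s H(f,g)$, $\Pi(f,J^sg)$ and $H(f,J^sg)$ are each bounded by $\|J^s f\|_{L^2}\|g\|_{L^\infty}$ exactly as above — for $\Pi(f,J^sg)$ using $\|P_{\le k-3}J^sg\|_{L^\infty}\lesssim 2^{ks}\|g\|_{L^\infty}$, valid since $s>0$ — so these contributions are acceptable. The only delicate term is the remaining difference
$$
\sum_k\Big( J^s\big((P_k g)\,P_{\le k-3}f\big) - \big(J^s P_k g\big)\,P_{\le k-3}f\Big) \;=\; \sum_k [J^s, P_{\le k-3}f]\,P_k g.
$$
Each summand is a bilinear Fourier multiplier applied to $(P_{\le k-3}f, P_k g)$ with symbol $\langle\xi\rangle^s - \langle\eta\rangle^s$ restricted to $|\eta|\sim 2^k$, $|\xi-\eta|\lesssim 2^{k-3}$. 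Taylor's theorem gives $\langle\xi\rangle^s - \langle\eta\rangle^s = (\xi - \eta)\cdot\int_0^1 \nabla\big(\langle\cdot\rangle^s\big)\big(\eta + \theta(\xi-\eta)\big)\,d\theta$, so the symbol factors as $i(\xi - \eta)\cdot\sigma_k(\xi,\eta)$ with $\sigma_k$ a Coifman--Meyer symbol adapted to scale $2^k$ and of order $s-1$, uniformly in $k$. Hence, by the Coifman--Meyer bilinear multiplier theorem (equivalently, a direct kernel estimate),
$$
\big\|[J^s,P_{\le k-3}f]\,P_k g\big\|_{L^2}\;\lesssim\; 2^{k(s-1)}\,\|\nabla f\|_{L^\infty}\,\|P_k g\|_{L^2},
$$
and, the $k$-th output being supported in $|\xi|\sim 2^k$, summing in $\ell^2$ produces $\|\nabla f\|_{L^\infty}\|J^{s-1}g\|_{L^2}$. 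Together with the contributions above, this gives the commutator bound.

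The substantive step is the symbol estimate underlying the last display: extracting the cancellation $\langle\xi\rangle^s - \langle\eta\rangle^s = O\big(|\xi-\eta|\,\langle\eta\rangle^{s-1}\big)$ with enough regularity for the Coifman--Meyer theorem to apply uniformly in the frequency scale $2^k$. The remaining estimates are routine Littlewood--Paley bookkeeping, and one may alternatively simply invoke \cite{KP, FRAC}.
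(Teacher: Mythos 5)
The paper gives no proof of this lemma at all: it is quoted from \cite{KP} and \cite{FRAC}, which is also the fallback you allow yourself in your last sentence. Your paraproduct sketch follows the standard route, and most of it is sound: the two unbalanced paraproducts in the Leibniz rule, the term $J^sH(f,g)$ (where the outer $J^s$ sees the output frequency $2^l\le 2^{j+O(1)}$, so the factor $2^{(l-j)s}$, summable precisely because $s>0$, restores $\ell^2$ summability over the overlapping outputs), the term $\Pi(f,J^sg)$ via $\|P_{\le k-3}J^sg\|_{L^\infty}\lesssim 2^{ks}\|g\|_{L^\infty}$, and the genuinely commutator-type term $\sum_k[J^s,P_{\le k-3}f]P_kg$ with its Taylor-factored symbol are all handled correctly.

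The gap is the claim that $H(f,J^sg)=\sum_{|j-k|\le 2}(P_jf)(P_kJ^sg)$ is bounded by $\|J^sf\|_{L^2}\|g\|_{L^\infty}$ ``exactly as above.'' It is not: here $J^s$ acts on $g$ at its own frequency $2^k\sim 2^j$, so the weight is $2^{js}$ irrespective of the output frequency, and the mechanism that saved $J^sH(f,g)$ --- the decaying factor $2^{(l-j)s}$ when the output frequency $2^l$ is much smaller than $2^j$ --- is simply absent. Crude estimates give only $\|P_l H(f,J^sg)\|_{L^2}\lesssim \sum_{j\ge l-C}2^{js}\|P_jf\|_{L^2}\|g\|_{L^\infty}$ with no gain in $j-l$, and $\sum_l\bigl(\sum_{j\ge l}c_j\bigr)^2$ is not controlled by $\sum_j c_j^2$; the triangle inequality instead yields an $\ell^1_j$ sum of $2^{js}\|P_jf\|_{L^2}$ (a Besov $B^{s}_{2,1}$ norm of $f$), which does not follow from $\|J^sf\|_{L^2}$. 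Bounding the factor $P_jf$ in $L^\infty$ instead runs into the same $\ell^1$ versus $\ell^2$ obstruction against $\|\nabla f\|_{L^\infty}\|J^{s-1}g\|_{L^2}$. This high--high (resonant) term is in fact the hardest piece of the commutator estimate: writing it as $\sum_j (\tilde P_j J^sf)(Q_jg)$ with $Q_j:=2^{-js}J^s\tilde P_j$ a uniformly bounded, mean-zero projection at scale $2^j$, what one needs is the resonant paraproduct bound $\bigl\|\sum_j(P_jF)(Q_jg)\bigr\|_{L^2}\lesssim\|F\|_{L^2}\|g\|_{\mathrm{BMO}}$, i.e.\ the Carleson embedding theorem applied to the square function of $g\in L^\infty\subset\mathrm{BMO}$ (equivalently, in Kato--Ponce's original argument, a Coifman--Meyer-type analysis on the whole region $|\xi-\eta|\gtrsim\langle\eta\rangle$ rather than scale by scale). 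That is a substantive missing ingredient, not routine Littlewood--Paley bookkeeping. The rest of your sketch, and the attribution to \cite{KP,FRAC}, is fine.
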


Using Lemma~\ref{conmKP}, we deduce the following {\it a priori} energy estimate for smooth solutions which we take in $H^{d+1}(\mathbb{R}^{d})$ from now on for definiteness.

\begin{lemma}\label{apriEST} Let $s\in(0, d+1]$. There is a constant~$c_{s}>0$ such that
\begin{equation*}
   \sup_{t\in[0,T]} \left\|u(t)\right\|_{H^{s}}^2 \le \left\|u_0\right\|_{H^s}^2+c_s \sup_{t\in[0,T]} \left\|u(t)\right\|_{H^s}^2  \int_{0}^T\left\|\nabla u(t)\right\|_{L^{\infty}}dt
\end{equation*}
whenever $u_0\in H^{d+1}(\mathbb{R}^{d})$  and $u\in C\big([0,T]; H^{d+1}(\mathbb{R}^{d})\big)$ solves \eqref{HBO-IVP}.
\end{lemma}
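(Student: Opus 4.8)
The plan is to run the standard energy method at the level of the $H^s$-norm, differentiating $\|J^s u(t)\|_{L^2}^2$ in time and using the Kato--Ponce commutator estimate to absorb the nonlinear term. First I would apply $J^s$ to the equation $\partial_t u = \mathcal{R}_1\Delta u - u\partial_{x_1}u$ and pair with $J^s u$ in $L^2$. The contribution of the linear term $\langle J^s\mathcal{R}_1\Delta u, J^s u\rangle$ vanishes because $\mathcal{R}_1\Delta$ is skew-adjoint (its Fourier multiplier $i\xi_1|\xi|$ is purely imaginary), so
\begin{equation*}
\frac{1}{2}\frac{d}{dt}\|J^s u(t)\|_{L^2}^2 = -\big\langle J^s(u\,\partial_{x_1}u), J^s u\big\rangle.
\end{equation*}

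Next I would split the right-hand side using the commutator: write $J^s(u\,\partial_{x_1}u) = u\,J^s\partial_{x_1}u + [J^s,u]\partial_{x_1}u$. For the first piece, integrate by parts in $x_1$ to move the derivative off $J^s u$, producing $\tfrac12\int (\partial_{x_1}u)\,(J^s u)^2$, which is bounded by $\|\nabla u\|_{L^\infty}\|J^s u\|_{L^2}^2$. For the commutator piece, apply the first inequality in Lemma~\ref{conmKP} with $f = u$ and $g = \partial_{x_1}u$ to get $\|[J^s,u]\partial_{x_1}u\|_{L^2} \lesssim \|\nabla u\|_{L^\infty}\|J^{s-1}\partial_{x_1}u\|_{L^2} + \|J^s u\|_{L^2}\|\partial_{x_1}u\|_{L^\infty} \lesssim \|\nabla u\|_{L^\infty}\|J^s u\|_{L^2}$, and then Cauchy--Schwarz against $J^s u$ gives a bound $\lesssim \|\nabla u\|_{L^\infty}\|J^s u\|_{L^2}^2$. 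Combining, $\frac{d}{dt}\|J^s u(t)\|_{L^2}^2 \le c_s \|\nabla u(t)\|_{L^\infty}\|J^s u(t)\|_{L^2}^2$.

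Finally I would integrate this differential inequality from $0$ to any $t\in[0,T]$, bound $\|J^s u(t)\|_{L^2}^2$ on the right by its supremum over $[0,T]$, pull that supremum out of the time integral, and then take the supremum over $t\in[0,T]$ on the left; using $\|u_0\|_{H^s} = \|J^s u_0\|_{L^2}$ this yields exactly the stated inequality. The hypothesis $u\in C([0,T];H^{d+1})$ with $s\le d+1$ ensures all the quantities appearing are finite and that the time-differentiation and integration-by-parts manipulations are justified (one can first work with the genuinely smooth solutions guaranteed for $H^{d+1}$ data). The only mild subtlety — not really an obstacle — is making the formal $\frac{d}{dt}$ computation rigorous for merely $H^{d+1}$-valued solutions; this is handled routinely by noting $\partial_t u = \mathcal{R}_1\Delta u - u\partial_{x_1}u \in C([0,T];H^{d-1})$, so $t\mapsto \|J^s u(t)\|_{L^2}^2$ is $C^1$ for $s \le d$, and the endpoint case $s=d+1$ follows by a standard mollification/limiting argument, or one simply restricts $s$ to $(0,d+1]$ and invokes the regularity of the approximating smooth solutions.
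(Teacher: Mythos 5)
Your argument is correct and is essentially identical to the paper's proof: the same splitting of $J^s(u\partial_{x_1}u)$ into a commutator term handled by Lemma~\ref{conmKP} and a term $u J^s\partial_{x_1}u$ handled by writing $J^s\partial_{x_1}u\,J^su=\tfrac12\partial_{x_1}(J^su)^2$ and integrating by parts, followed by integration in time. No discrepancies to report.
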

\begin{proof}
Applying $J^s$ to the higher dimensional Benjamin--Ono equation, multiplying by $J^s u$ and integrating in space yields
\begin{equation*}
    \frac{1}{2}\frac{d}{dt}\int_{\mathbb{R}^d} (J^s u)^2 \, dx=-\int_{\mathbb{R}^d} [J^s,u]\partial_{x_1} u J^s u \, dx-\int_{\mathbb{R}^d} u J^s \partial_{x_1} u J^s u \, dx.
\end{equation*}
The Riesz transform term  is easily seen to be zero using the fact that it is skew-self-adjoint. The first term on the right-hand side is bounded by the Cauchy--Schwarz inequality and the commutator estimate of  Lemma~\ref{conmKP}. Noting that $J^s\partial_{x_1}uJ^su=\frac{1}{2}\partial_{x_1}(J^su)^2$, the second
term is controlled by integrating by parts and using H\"older's inequality. Together we deduce
\begin{equation*}
    \frac{d}{dt}\left\|J^s u\right\|_{L^2}^2 \lesssim \left\|\nabla u \right\|_{L^{\infty}}\left\|J^s u\right\|_{L^2}^2.
\end{equation*}
Integrating in time, the desired inequality follows.
\end{proof}

\subsection{Nonlinear estimates}

As in the previous section we will prove {\it a priori} estimates for smooth solutions with respect to norms of lower regularity. The following estimate for the Lipschitz norm is a consequence of the inhomogeneous Strichartz estimate of Lemma~\ref{refinStri}.

\begin{lemma}\label{apriEstS}Let $s\in(s_d,d+1]$ where $s_d := d/2+1/2$ for $d\ge 3$ and $s_2 := 5/3$. For $T\le 1$, let
\begin{equation*}\label{kdef}
K(T):=\int_0^T \left\|u(t)\right\|_{L^\infty}+\left\|\nabla u(t)\right\|_{L^{\infty}} dt.
\end{equation*}
Then there is a constant~$C_{s}>0$ such that
\begin{equation*}
    K(T)\le C_sT^{1/2}\sup_{t\in[0,T]}\left\|u(t)\right\|_{H^s}\big(1+K(T)\big),
\end{equation*}
whenever $u_0\in H^{d+1}(\mathbb{R}^{d})$ and $u\in C\big([0,T]; H^{d+1}(\mathbb{R}^{d})\big)$ solves \eqref{HBO-IVP}.
\end{lemma}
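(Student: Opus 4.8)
The strategy is to apply the inhomogeneous Strichartz estimate of Lemma~\ref{refinStri} to the solution $u$ of \eqref{HBO-IVP}, viewing the nonlinearity as a forcing term. Write the equation as $\partial_t u - \mathcal{R}_1\Delta u = F$ with $F := -u\,\partial_{x_1} u = -\tfrac12\partial_{x_1}(u^2)$. Since $s > s_d$, we have $s - 1 > s_d - 1$, so Lemma~\ref{refinStri} applies with this $s$ and gives
\begin{equation*}
\int_0^T \|u(t)\|_{L^\infty}\,dt \le C_s T^{1/2}\Big(\sup_{t\in[0,T]}\|u(t)\|_{H^s} + \int_0^T \|u\partial_{x_1}u(t)\|_{H^{s-1}}\,dt\Big).
\end{equation*}
For the gradient term, one applies $\partial_{x_j}$ to the equation; then $\partial_{x_j}u$ solves the same linear equation with forcing $\partial_{x_j}F = -\partial_{x_j}(u\partial_{x_1}u)$, and Lemma~\ref{refinStri} again (noting that the $H^s$ norm of $\partial_{x_j}u$ is controlled by the $H^{s+1}$-norm of $u$, which is available since we assume $u\in C([0,T];H^{d+1})$ and $s\le d$, hence $s+1\le d+1$) yields the analogous bound for $\int_0^T\|\nabla u(t)\|_{L^\infty}\,dt$. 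Actually it is cleaner to run Lemma~\ref{refinStri} once with $s$ replaced by a value in $(s_d, d+1]$ large enough to dominate both $\|u\|_{L^\infty}$ and $\|\nabla u\|_{L^\infty}$ via the $W^{1,\infty}$-relevant regularity; but the honest route is to treat the two pieces of the $W^{1,\infty}$-norm separately and sum.

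The core of the argument is then the bilinear estimate bounding $\|u\partial_{x_1}u\|_{H^{s-1}}$ (and its $\partial_{x_j}$-differentiated analogue) in terms of $\|u\|_{H^s}$ and the $W^{1,\infty}$-norm. Using the fractional Leibniz rule from Lemma~\ref{conmKP} in the form $\|J^{s-1}(fg)\|_{L^2}\lesssim \|J^{s-1}f\|_{L^2}\|g\|_{L^\infty} + \|f\|_{L^\infty}\|J^{s-1}g\|_{L^2}$, applied with $f = u$, $g = \partial_{x_1}u$, one gets
\begin{equation*}
\|u\partial_{x_1}u\|_{H^{s-1}} \lesssim \|u\|_{H^{s-1}}\|\partial_{x_1}u\|_{L^\infty} + \|u\|_{L^\infty}\|\partial_{x_1}u\|_{H^{s-1}} \lesssim \|u\|_{H^s}\big(\|u\|_{L^\infty} + \|\nabla u\|_{L^\infty}\big),
\end{equation*}
since $\|u\|_{H^{s-1}}, \|\partial_{x_1}u\|_{H^{s-1}} \le \|u\|_{H^s}$. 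Integrating in $t$ and taking the supremum of $\|u(t)\|_{H^s}$ outside the time integral turns $\int_0^T\|u\partial_{x_1}u(t)\|_{H^{s-1}}\,dt$ into $\lesssim \sup_{t\in[0,T]}\|u(t)\|_{H^s}\cdot K(T)$. The differentiated nonlinearity $\partial_{x_j}(u\partial_{x_1}u)$ is handled the same way at regularity level $H^{s-1}$ but costs one extra derivative, i.e.\ it is bounded by $\|u\partial_{x_1}u\|_{H^s}$, which by the same Leibniz estimate is $\lesssim \|u\|_{H^{s+1}}(\|u\|_{L^\infty}+\|\nabla u\|_{L^\infty})$; here one needs $s+1\le d+1$, which holds. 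Assembling, and noting the "$1$" in $(1+K(T))$ comes from the $\sup\|u\|_{H^s}$ term already present in Lemma~\ref{refinStri}, one obtains
\begin{equation*}
K(T) \le C_s T^{1/2}\sup_{t\in[0,T]}\|u(t)\|_{H^s}\big(1 + K(T)\big).
\end{equation*}

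The main obstacle is bookkeeping the regularity: one must make sure that every application of Lemma~\ref{refinStri} and of Lemma~\ref{conmKP} stays within the window where $u$ is known to be smooth ($H^{d+1}$), which forces the upper constraint $s\le d+1$ on the statement and requires the differentiated estimate to use at most $H^{s+1}\subseteq H^{d+1}$ — hence the slight asymmetry in the hypotheses. A secondary, minor point is ensuring the constant in Lemma~\ref{refinStri} does not degenerate as we differentiate; since $P_k$-localised pieces gain a factor $2^k$ per derivative which is absorbed into the $H^{s}\to H^{s+1}$ comparison, this is routine. No estimate requires anything beyond the three ingredients already in hand: Lemma~\ref{refinStri}, the fractional product rule of Lemma~\ref{conmKP}, and the trivial Sobolev embeddings $H^{s-1},H^{s+1}\hookrightarrow$ the relevant spaces with $s\in(s_d,d+1]$.
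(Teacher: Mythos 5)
Your overall strategy is the paper's: apply the inhomogeneous Strichartz estimate of Lemma~\ref{refinStri} to $u$ and to $\nabla u$, then control the nonlinear forcing via the Leibniz-rule half of Lemma~\ref{conmKP}. The first piece ($\int_0^T\|u\|_{L^\infty}$) and the bilinear estimate $\|u\partial_{x_1}u\|_{H^{s-1}}\lesssim \|u\|_{H^s}(\|u\|_{L^\infty}+\|\nabla u\|_{L^\infty})$ are exactly right.

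However, your treatment of the gradient piece has a genuine gap. You apply Lemma~\ref{refinStri} to $w=\nabla u$ \emph{at regularity level $s$}, which forces you to control $\sup_t\|\nabla u(t)\|_{H^{s}}\le \sup_t\|u(t)\|_{H^{s+1}}$ and $\|\partial_{x_j}(u\partial_{x_1}u)\|_{H^{s-1}}\le\|u\partial_{x_1}u\|_{H^{s}}\lesssim\|u\|_{H^{s+1}}(\cdots)$. What you then "assemble" is a bound on $K(T)$ involving $\sup_t\|u(t)\|_{H^{s+1}}$, not $\sup_t\|u(t)\|_{H^s}$ as the lemma asserts; the two are not comparable, and the qualitative fact that $u\in C([0,T];H^{d+1})$ gives no quantitative control of $\|u\|_{H^{s+1}}$ by $\|u\|_{H^s}$. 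This matters downstream: Lemma~\ref{apriESTlower} closes its bootstrap precisely because the right-hand side depends only on $\sup_t\|u(t)\|_{H^s}$ and $K(T)$. The correct move — and the reason Lemma~\ref{refinStri} is stated for the threshold $s>s_d-1$ rather than $s>s_d$ — is to apply it to $w=\nabla u$, $F=-\nabla(u\partial_{x_1}u)$ \emph{at level $s-1$} (admissible since $s>s_d$ gives $s-1>s_d-1$). Then
\begin{equation*}
\int_0^T\|\nabla u(t)\|_{L^\infty}\,dt\;\lesssim\;T^{1/2}\Bigl(\sup_{t\in[0,T]}\|\nabla u(t)\|_{H^{s-1}}+\int_0^T\|\nabla(u\partial_{x_1}u)(t)\|_{H^{s-2}}\,dt\Bigr)\;\lesssim\;T^{1/2}\Bigl(\sup_{t\in[0,T]}\|u(t)\|_{H^{s}}+\int_0^T\|(u\partial_{x_1}u)(t)\|_{H^{s-1}}\,dt\Bigr),
\end{equation*}
and the same bilinear estimate you already proved finishes the argument with the correct $H^s$ dependence. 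With that one-line correction your proof coincides with the paper's.
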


\begin{proof}
   Applying Lemma~\ref{refinStri} with $w:=u$ and $F:=-u\partial_{x_1}u$ and also with $w:=\nabla u$ and $F := -\nabla(u\partial_{x_1}u)$, it follows that
\begin{equation}\label{liest8}
  K(T) \le C_{s} T^{1/2} \left(\sup_{t\in[0,T]}\left\|u(t)\right\|_{H^s}+\int_0^T \left\|(u\partial_{x_1}u)(t') \right\|_{H^{s-1}} dt'\right).
\end{equation}
Then using the second estimate of Lemma~\ref{conmKP}, 
\begin{equation*}\label{liest10}
    \begin{aligned}
   \left\|(u\partial_{x_1}u)(t) \right\|_{H^{s-1}}  &\lesssim \left\| u(t)\right\|_{L^{\infty}}\left\|J^{s-1}\partial_{x_1}u(t)\right\|_{L^{2}}+\left\|J^{s-1} u(t)\right\|_{L^2}\left\|\partial_{x_1}u(t)\right\|_{L^{\infty}} \\
    &\lesssim \big(\left\|u(t)\right\|_{L^{\infty}}+\left\|\nabla u(t)\right\|_{L^{\infty}}\big)\sup_{t\in[0,T]}\left\|u(t)\right\|_{H^s},
    \end{aligned}
\end{equation*}
which can be plugged into \eqref{liest8} to yield the desired inequality.
\end{proof}

With $s>d/2+1$, Theorem~\ref{imprwellpos} follows by a parabolic regularization of~\eqref{HBO-IVP};  an additional term $-\mu \Delta u$ is added, after which the limit $\mu \to 0$ is taken. The argument also yields a blow-up criteria. The result follows directly from the arguments of Iorio~\cite{Iorio} and so is omitted. 

\begin{lemma}\label{comwellp}
Let $s> d/2+1$. Then, for any $u_0\in H^s(\mathbb{R}^d)$, there exists a time $T=T(\left\|u_0\right\|_{H^s})$ and a unique maximal solution~$u$ to \eqref{HBO-IVP} in $C([0,T^\ast);H^s(\mathbb{R}^d))$.
Moreover, if the maximal time of existence $T^{\ast}$ is finite, then
\begin{equation*}
    \lim_{t \to T^{\ast}} \left\|u(t)\right\|_{H^s}=\infty,
\end{equation*}
and the flow map $u_0 \mapsto u(t)$ is continuous from $H^s(\mathbb{R}^d)$ to $H^s(\mathbb{R}^d)$.
\end{lemma}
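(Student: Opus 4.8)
The plan is to prove Lemma~\ref{comwellp} by the classical parabolic regularisation scheme, following Iorio~\cite{Iorio}, so I will only sketch the structure. First I would introduce the regularised problem
\begin{equation*}
\partial_t u^\mu - \mathcal{R}_1\Delta u^\mu + u^\mu \partial_{x_1} u^\mu = -\mu\Delta u^\mu, \qquad u^\mu(\cdot,0) = u_0,
\end{equation*}
with $\mu>0$. Since $-\mu\Delta$ generates the heat semigroup, which is a strongly continuous contraction semigroup on every $H^s(\mathbb{R}^d)$ and is smoothing, a standard fixed-point argument in $C([0,T_\mu);H^s)$ on the Duhamel formulation gives, for each $\mu>0$, a unique local solution $u^\mu$; moreover the parabolic smoothing upgrades $u^\mu$ to a solution lying in $C([0,T_\mu);H^{s'})$ for every $s'\geq s$, so it is genuinely smooth for positive time. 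The point of working at regularity $s>d/2+1$ is that then $H^{s-1}\hookrightarrow L^\infty$, so $\|\nabla u\|_{L^\infty}\lesssim \|u\|_{H^s}$ by Sobolev embedding, and the energy estimate of Lemma~\ref{apriEST} (which is proved for the clean equation but whose proof carries over verbatim since the extra term $-\mu\Delta u^\mu$ contributes $-\mu\|\nabla J^s u^\mu\|_{L^2}^2\leq 0$ to $\tfrac{d}{dt}\|J^s u^\mu\|_{L^2}^2$) yields
\begin{equation*}
\frac{d}{dt}\|u^\mu(t)\|_{H^s}^2 \lesssim \|u^\mu(t)\|_{H^s}^3.
\end{equation*}
Integrating this differential inequality gives a time $T=T(\|u_0\|_{H^s})>0$, \emph{independent of} $\mu$, and a uniform bound $\sup_{[0,T]}\|u^\mu(t)\|_{H^s}\leq 2\|u_0\|_{H^s}$.

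Next I would pass to the limit $\mu\to 0$. The uniform $H^s$ bound gives, after passing to a subsequence, a weak-$\ast$ limit $u\in L^\infty([0,T];H^s)$. To get strong convergence, estimate the difference $u^\mu-u^{\mu'}$ in the lower norm $L^2$ (or $H^{s-1}$): using the equation, $\|u^\mu-u^{\mu'}\|_{L^2}$ satisfies a Gronwall inequality with a term $O(\mu+\mu')$ coming from the regularising terms and a term controlled by $\|\nabla u^\mu\|_{L^\infty}+\|\nabla u^{\mu'}\|_{L^\infty}\lesssim \|u_0\|_{H^s}$, so $\{u^\mu\}$ is Cauchy in $C([0,T];L^2)$. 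Interpolating with the uniform $H^s$ bound, $u^\mu\to u$ strongly in $C([0,T];H^{s'})$ for every $s'<s$, which is enough to pass to the limit in the (nonlinear) equation and identify $u$ as a solution of~\eqref{HBO-IVP} belonging to $L^\infty([0,T];H^s)$; the Bona--Smith style argument (or the abstract result of Iorio) then upgrades this to $u\in C([0,T];H^s)$, with right-continuity at $t=0$ obtained from weak continuity plus norm convergence.

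Uniqueness in $C([0,T];H^s)$ follows by the same $L^2$ energy estimate applied directly to the difference of two solutions $u_1,u_2$: writing $v=u_1-u_2$, one gets $\tfrac{d}{dt}\|v\|_{L^2}^2 \lesssim (\|\nabla u_1\|_{L^\infty}+\|\nabla u_2\|_{L^\infty})\|v\|_{L^2}^2$ (the $\mathcal{R}_1\Delta$ term drops by skew-adjointness, and the nonlinear difference is handled by integration by parts exactly as in Lemma~\ref{apriEST}), so Gronwall forces $v\equiv 0$ since the right-hand coefficient is in $L^1_t$ by Sobolev embedding. The maximal solution and the blow-up alternative are then obtained in the usual way: let $T^\ast$ be the supremum of times of existence; if $T^\ast<\infty$ and $\liminf_{t\to T^\ast}\|u(t)\|_{H^s}<\infty$, the uniform-in-data local existence time would let us restart the solution past $T^\ast$, a contradiction, so $\lim_{t\to T^\ast}\|u(t)\|_{H^s}=\infty$. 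Finally, continuous dependence of the flow map: given $u_0^n\to u_0$ in $H^s$, the common existence time is bounded below, $\|u^n-u\|_{C([0,T];L^2)}\to 0$ by the Gronwall difference estimate, and the Bona--Smith argument (mollify the data, use the uniform $H^s$ bound and the continuity of the mollified flows) promotes this to convergence in $C([0,T];H^s)$. The main obstacle is the last point, continuous dependence at the top regularity $H^s$: weak lower semicontinuity of the norm only gives one-sided control, so one genuinely needs the Bona--Smith splitting of the data into low- and high-frequency parts to show $\limsup_n \sup_{[0,T]}\|u^n(t)\|_{H^s}\leq \sup_{[0,T]}\|u(t)\|_{H^s}$; everything else is routine Gronwall and compactness. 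Since these arguments are exactly those of Iorio~\cite{Iorio}, I would state the lemma and refer there for the details.
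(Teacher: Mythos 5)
Your proposal is the same parabolic-regularisation argument of Iorio that the paper invokes (the paper simply cites \cite{Iorio} and omits the details), and the structure — uniform-in-$\mu$ energy bounds via $H^{s-1}\hookrightarrow L^{\infty}$ for $s>d/2+1$, a Cauchy estimate in a lower norm to pass to the limit, the standard blow-up alternative, and a Bona--Smith argument for continuity of the flow — is correct. One small slip: as written, $\partial_t u^{\mu}-\mathcal{R}_1\Delta u^{\mu}+u^{\mu}\partial_{x_1}u^{\mu}=-\mu\Delta u^{\mu}$ places the \emph{backward} heat operator in the evolution; the regularising term should be $+\mu\Delta u^{\mu}$ on that side (equivalently, $-\mu\Delta u^{\mu}$ added to the left-hand side), which is what your subsequent computation of the dissipative contribution $-\mu\|\nabla J^{s}u^{\mu}\|_{L^{2}}^{2}\le 0$ actually assumes.
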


Given this lemma, we first prove that the smooth solutions exist long enough for our purposes, taking advantage of the blow-up criteria. We then provide another nonlinear estimate. The proof  follows closely the arguments of~\cite{LinarFKP}.

\begin{lemma}\label{apriESTlower}
Let $s\in(s_d,d+1]$ where $s_d := d/2+1/2$ for $d\ge3$ and $s_2 := 5/3$.  Then there is a constant $A_s>0$ such that, for all $u_0\in H^{d+1}(\mathbb{R}^d)$,   there is a solution $u\in C([0,T^\ast); H^{d+1}(\mathbb{R}^d))$ of \eqref{HBO-IVP} with $T^\ast \ge (1+A_s\left\|u_0\right\|_{H^s})^{-2}$. Moreover, there is a constant $K_s>0$ such that 
\begin{equation*}
    \sup_{t\in[0,T]}\left\|u(t)\right\|_{H^s} \le 2 \left\|u_0\right\|_{H^s}, \quad\text{and}\quad K(T)\le K_s
\end{equation*}
whenever $T\le(1+A_s\left\|u_0\right\|_{H^s})^{-2}$.
\end{lemma}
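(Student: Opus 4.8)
The plan is to run a continuity (bootstrap) argument on the quantities $\sup_{t\in[0,T]}\|u(t)\|_{H^s}$ and $K(T)$, combining the local existence and blow-up criterion of Lemma~\ref{comwellp} (applied at regularity $d+1$), the energy estimate of Lemma~\ref{apriEST}, and the nonlinear Strichartz estimate of Lemma~\ref{apriEstS}. Fix $u_0\in H^{d+1}(\R^d)$ and let $u\in C([0,T^\ast);H^{d+1}(\R^d))$ be the maximal solution from Lemma~\ref{comwellp}. I would work at the fixed lower regularity $s\in(s_d,d+1]$ throughout, so that the constants $c_s$, $C_s$ of Lemmas~\ref{apriEST} and~\ref{apriEstS} are available.

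First I would set up the bootstrap. Define, for $T<\min\{T^\ast,1\}$, the quantity $N(T):=\sup_{t\in[0,T]}\|u(t)\|_{H^s}$ and recall $K(T)=\int_0^T \|u(t)\|_{L^\infty}+\|\nabla u(t)\|_{L^\infty}\,dt$; both are continuous nondecreasing functions of $T$ with $N(0)=\|u_0\|_{H^s}$ and $K(0)=0$. Suppose, as the bootstrap hypothesis on a subinterval, that $N(T)\le 2\|u_0\|_{H^s}$. Then Lemma~\ref{apriEstS} gives $K(T)\le C_s T^{1/2}\cdot 2\|u_0\|_{H^s}(1+K(T))$, so choosing $T$ small enough that $2C_sT^{1/2}\|u_0\|_{H^s}\le 1/2$ yields $K(T)\le 1$; this is where the exponent $2$ in $(1+A_s\|u_0\|_{H^s})^{-2}$ comes from, since $T^{1/2}\lesssim(1+\|u_0\|_{H^s})^{-1}$ is the required smallness. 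Feeding $K(T)\le 1$ into Lemma~\ref{apriEST} gives $N(T)^2\le \|u_0\|_{H^s}^2+c_s N(T)^2\cdot 1$; but this alone is not quite enough, so instead I would keep $K(T)$ explicit: $N(T)^2\le \|u_0\|_{H^s}^2+c_sN(T)^2K(T)$, and then re-run the estimate of $K(T)$ with the bootstrap bound to get $K(T)\le 3C_sT^{1/2}\|u_0\|_{H^s}$ once $T$ is small. Choosing $T$ so that additionally $c_s\cdot 4\|u_0\|_{H^s}^2\cdot 3C_sT^{1/2}\|u_0\|_{H^s}\le 3\|u_0\|_{H^s}^2$, i.e.\ $12 c_s C_s T^{1/2}\|u_0\|_{H^s}\le 3$, forces $N(T)^2\le 4\|u_0\|_{H^s}^2$, which improves the hypothesis to a strict inequality $N(T)<2\|u_0\|_{H^s}$ provided we track constants slightly more carefully (e.g.\ aim for $N(T)^2\le \tfrac32\|u_0\|_{H^s}^2$). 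A standard continuity argument then shows the improved bounds $N(T)\le 2\|u_0\|_{H^s}$ and $K(T)\le K_s$ persist for all $T\le(1+A_s\|u_0\|_{H^s})^{-2}$ with $A_s$ chosen to encode all the smallness thresholds above and $K_s$ a fixed constant.

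Finally I would use the blow-up criterion to conclude $T^\ast\ge(1+A_s\|u_0\|_{H^s})^{-2}$. Indeed, the bootstrap gives $\sup_{t\in[0,T]}\|u(t)\|_{H^s}\le 2\|u_0\|_{H^s}$ on every $T<\min\{T^\ast,(1+A_s\|u_0\|_{H^s})^{-2}\}$; since $K(T)\le K_s$ bounds $\int_0^T\|\nabla u\|_{L^\infty}$, the energy inequality~\eqref{EE-HBO} (or Lemma~\ref{apriEST} at regularity $d+1$) propagates an $H^{d+1}$ bound $\sup_{t\in[0,T]}\|u(t)\|_{H^{d+1}}\le \|u_0\|_{H^{d+1}}e^{cK_s}$, which stays finite. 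By the blow-up alternative of Lemma~\ref{comwellp}, $\|u(t)\|_{H^{d+1}}$ cannot blow up before $(1+A_s\|u_0\|_{H^s})^{-2}$, hence $T^\ast\ge(1+A_s\|u_0\|_{H^s})^{-2}$.

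The main obstacle is bookkeeping the constants so that the bootstrap genuinely closes: Lemma~\ref{apriEST} is quadratic in $N(T)$ and only gains from the small factor $K(T)$, while Lemma~\ref{apriEstS} gains from $T^{1/2}$ but is itself only controlled once $K(T)$ is already known to be bounded, so the two estimates must be interleaved in the correct order (first bound $K$ assuming the $N$-bound, then improve the $N$-bound using the resulting small $K$), and the smallness of $T$ must simultaneously beat the constant in the $K$-estimate and the constant $c_s$ in the energy estimate — absorbing everything into a single $A_s$. A minor secondary point is that Lemmas~\ref{apriEST},~\ref{apriEstS} are stated for smooth solutions in $C([0,T];H^{d+1})$, which is exactly the class provided by Lemma~\ref{comwellp}, so no approximation is needed here; one just has to make sure the maximal time $T^\ast$ in Lemma~\ref{comwellp} is the same object throughout.
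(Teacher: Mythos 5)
Your proposal is correct and follows essentially the same route as the paper: a continuity/bootstrap argument under the hypothesis $\sup_{t\in[0,T]}\|u(t)\|_{H^s}\le 2\|u_0\|_{H^s}$, first closing the $K(T)$ bound via Lemma~\ref{apriEstS} with $T^{1/2}\lesssim(1+\|u_0\|_{H^s})^{-1}$, then feeding the resulting smallness of $c_sK(T)$ into Lemma~\ref{apriEST} to get the strict improvement $N(T)^2\le\tfrac32\|u_0\|_{H^s}^2$, and finally propagating the $H^{d+1}$ bound (the paper uses Lemma~\ref{apriEST} at regularity $d+1$, whence $A_s$ involves $\max\{c_s,c_{d+1}\}$) to invoke the blow-up alternative of Lemma~\ref{comwellp}. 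The constant-tracking caveats you flag are exactly the ones the paper resolves by choosing $A_s=8(1+\max\{c_s,c_{d+1}\})C_s$ so that $K(T)\le(3\max\{c_s,c_{d+1}\})^{-1}$.
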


\begin{proof} Set $A_s := 8(1+\max\{c_s,c_{d+1}\})C_s$ where $c_s$ and $c_{d+1}$ are the constants appearing in  Lemma~\ref{apriEST} and $C_s$ is the constant of  Lemma~\ref{apriEstS}. We consider any time $T\le(1+A_s\left\|u_0\right\|_{H^s})^{-2}$ for which
\begin{equation}\label{beg}
\sup_{t\in[0,T]}\left\|u(t)\right\|_{H^s}\le 2\left\|u_0\right\|_{H^s}
\end{equation}
for all $u_0\in H^{d+1}$. Then, by Lemma~\ref{apriEstS},
$$K(T)\le 2 C_s T^{1/2}\left\|u_0\right\|_{H^s}\big(1+K(T)\big).$$
Calculating we find
$$K(T)\le \frac{1}{3\max\{c_s,c_{d+1}\}}.$$
From the energy estimates of Lemma~\ref{apriEST} we deduce that both
\begin{equation*}
\sup_{t\in[0,T]}\left\|u(t)\right\|_{H^{s}}^2 \le \frac{3}{2} \left\|u_0\right\|_{H^{s}}^2
\quad\text{and}\quad
\sup_{t\in[0,T]}\left\|u(t)\right\|_{H^{d+1}}^2 \le \frac{3}{2} \left\|u_0\right\|_{H^{d+1}}^2
\end{equation*}
for all $u_0\in H^{d+1}$. In view of the blow-up criteria of Lemma~\ref{comwellp},  the latter estimate implies that we can take $T^{\ast}>T$. On the other hand, the former estimate and continuity implies that $T$ was not the largest time for which \eqref{beg} holds.
We conclude that the largest such $T$ must be as least as large as $(1+A_s\left\|u_0\right\|_{H^s})^{-2}$ which completes the proof. 
\end{proof}


\subsection{Uniqueness}\label{equniqu1}

Let $u_1$ and $u_2$ be two solutions of~\eqref{HBO-IVP}  in our class
$$
C\big([0,T);H^s(\mathbb{R}^d)\big)\cap L^1\big([0,T);W^{1,\infty}(\mathbb{R}^d)\big),
$$
with respective initial data $u_1(\cdot,0)=\phi_{1}$ and $u_2(\cdot,0)=\phi_2$.
By setting $v:=u_1-u_2$, we find that
\begin{equation*}
    \partial_t v-\mathcal{R}_1 \Delta v +u_1\partial_{x_1}u_1-u_2\partial_{x_1}u_2=0,
\end{equation*}
which can be rewritten as 
\begin{equation*}
    \partial_t v-\mathcal{R}_1 \Delta v +\tfrac{1}{2}\partial_{x_1}\big((u_1+u_2)v\big)=0.
\end{equation*}
Taking the inner product in $L^2(\mathbb{R}^d)$ with $v$, we arrive at
\begin{equation*}
  \frac{1}{2}\frac{d}{dt} \int_{\mathbb{R}^d} v^2 \, dx=-\frac{1}{4}\int_{\mathbb{R}^d} \partial_{x_1}(u_1+u_2)v^2 \, dx.   
\end{equation*}
Again by skew-adjointness, the Riesz transform term is zero, and one can arrive to the form of the right-hand side by integrating by parts twice. 
Thus,
\begin{equation*}
\frac{1}{2}\frac{d}{dt}\left\|v\right\|_{L^2}^2 \lesssim \big(\left\|\nabla u_1\right\|_{L^{\infty}}+\left\|\nabla u_2\right\|_{L^{\infty}}\big)\left\|v\right\|_{L^2}^2.
\end{equation*}
An application of Gronwall's inequality (see for example~\cite[Theorem 1.12]{taodisp}) gives
\begin{equation*}
    \sup_{t\in[0,T]}\left\|u_1(t)-u_2(t)\right\|_{L^2} \le \left\|\phi_1-\phi_2\right\|_{L^2}\exp\Big(c\int_0^T \left\|\nabla u_1\right\|_{L^{\infty}}+\left\|\nabla u_2\right\|_{L^{\infty}}dt\Big)
\end{equation*}
 from which uniqueness follows.

\subsection{Existence}

We mollify the initial datum as in the Bona--Smith argument~\cite{BS}.  Consider a radial and positive $\rho\in C_0^{\infty}(\mathbb{R}^d)$ such that $\rho(\xi)=1$ for $|\xi|\le 1/2$ and $\rho(\xi)=0$ for $|\xi|\ge1$, and define
$$u_{0,n}:=\big(\rho(n^{-1}\cdot)\widehat{u}_0\big)^{\vee}$$
for any integer $n\ge 1$.
First we state some properties of the regularized initial data.

\begin{lemma}\label{eqexist2}
Let $m\ge n \ge 1$ and $\alpha\le s$. Then
\begin{equation*}
    \left\|u_{0,n}-u_{0,m}\right\|_{H^{\alpha}}\lesssim n^{-(s-\alpha)}\|u_0\|_{H^{s}}.
\end{equation*}
Moreover, with $u_0\in H^s$,  
\begin{equation*}
    \left\|u_{0,n}-u_{0,m}\right\|_{H^{s}}\underset{n\to \infty}{\rightarrow} 0.
\end{equation*}
\end{lemma}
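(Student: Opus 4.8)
The plan is to work entirely on the Fourier side, using Plancherel's theorem. Writing $\widehat{u_{0,n}}(\xi) = \rho(n^{-1}\xi)\widehat{u}_0(\xi)$, the difference has Fourier transform $\widehat{u_{0,n}-u_{0,m}}(\xi) = \big(\rho(n^{-1}\xi)-\rho(m^{-1}\xi)\big)\widehat{u}_0(\xi)$, so that
\begin{equation*}
\|u_{0,n}-u_{0,m}\|_{H^\alpha}^2 = \int_{\mathbb{R}^d} (1+|\xi|^2)^\alpha \big|\rho(n^{-1}\xi)-\rho(m^{-1}\xi)\big|^2 |\widehat{u}_0(\xi)|^2\, d\xi.
\end{equation*}

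For the first estimate, I would exploit the frequency localisation of the multiplier $\rho(n^{-1}\xi)-\rho(m^{-1}\xi)$. Since $\rho(\xi)=1$ for $|\xi|\le 1/2$ and $m\ge n$, the difference $\rho(n^{-1}\xi)-\rho(m^{-1}\xi)$ vanishes for $|\xi|\le n/2$ and is bounded by $2\|\rho\|_{L^\infty}$ everywhere. Hence on the region where the integrand is supported we have $(1+|\xi|^2)^{\alpha-s} \lesssim n^{-2(s-\alpha)}$ (using $\alpha\le s$ and $|\xi|\gtrsim n$, together with $n\ge 1$ so that $1+|\xi|^2 \sim |\xi|^2 \gtrsim n^2$). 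Therefore
\begin{equation*}
\|u_{0,n}-u_{0,m}\|_{H^\alpha}^2 \lesssim n^{-2(s-\alpha)}\int_{\mathbb{R}^d} (1+|\xi|^2)^s |\widehat{u}_0(\xi)|^2\, d\xi = n^{-2(s-\alpha)}\|u_0\|_{H^s}^2,
\end{equation*}
and taking square roots gives the claimed bound.

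For the second assertion, I would apply the dominated convergence theorem to $g_n(\xi):= (1+|\xi|^2)^s\big|\rho(n^{-1}\xi)-\rho(m^{-1}\xi)\big|^2 |\widehat{u}_0(\xi)|^2$ (or more simply, bound $\|u_{0,n}-u_{0,m}\|_{H^s} \le \|u_{0,n}-u_0\|_{H^s}+\|u_0-u_{0,m}\|_{H^s}$ and show each term tends to $0$). For the single term $\|u_{0,n}-u_0\|_{H^s}^2 = \int (1+|\xi|^2)^s|\rho(n^{-1}\xi)-1|^2|\widehat{u}_0(\xi)|^2\,d\xi$, the integrand is dominated by the integrable function $4(1+\|\rho\|_{L^\infty}^2)(1+|\xi|^2)^s|\widehat{u}_0(\xi)|^2$ (using $u_0\in H^s$) and converges pointwise to $0$ as $n\to\infty$, since $\rho(n^{-1}\xi)\to\rho(0)=1$ for every fixed $\xi$. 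Dominated convergence then gives $\|u_{0,n}-u_0\|_{H^s}\to 0$, and similarly for $m$. This part is entirely routine; the only mild subtlety — which I do not anticipate to be a real obstacle — is being careful that the first estimate is uniform in $m\ge n$ (which it is, since the multiplier is supported where $|\xi|\gtrsim n$ regardless of $m$, and is uniformly bounded).
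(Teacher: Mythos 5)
Your proof is correct and follows essentially the same route as the paper: both pass to the Fourier side via Plancherel, use the support observation that $\rho(\xi/n)-\rho(\xi/m)$ vanishes for $|\xi|\le n/2$ to trade $\langle\xi\rangle^{\alpha}$ for $n^{-(s-\alpha)}\langle\xi\rangle^{s}$, and then invoke dominated convergence for the $\alpha=s$ case. Your remark about uniformity in $m$ (and the triangle-inequality variant of the convergence step) is a correct and welcome clarification of a point the paper leaves implicit.
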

\begin{proof}
By support considerations, we observe
\begin{equation*}
    \begin{aligned}
    \left|\langle \xi\rangle^{\alpha}(\rho(\xi/m)-\rho(\xi/n))\widehat{u}_{0}(\xi)\right|^2 & \lesssim  n^{-2(s-\alpha)}\big|(\rho(\xi/m)-\rho(\xi/n))\big|^2   \left|\langle \xi\rangle^{s} \widehat{u}_0(\xi)\right|^2
        \end{aligned}
\end{equation*}
and so the first estimate follows by integrating and Plancherel's identity. With $\alpha=s$, the result follows by the Lebesgue dominated convergence theorem.
\end{proof}

Let $s\in (s_d,d+1]$ where $s_d := d/2+1/2$ for $d\ge 3$ and $s_2 := 5/3$ and take $\alpha\in (s_d,s]$. For integers $n\ge 1$, we consider solutions $u_n\in C([0,T];H^{d+1}(\mathbb{R}^{d}))$ of the higher dimensional Benjamin--Ono equation with mollified initial data $u_{0,n}$; 
\begin{equation}\label{RegulHBO}
 \left\{
\begin{aligned}
&\partial_t u_n-\mathcal{R}_1 \Delta u_n +u_n\partial_{x_1}u_{n}=0, \hskip 15pt x\in \R^d,\,  t\in \R, \\&u_n(x,0)=u_{0,n}(x).
\end{aligned}
\right.
\end{equation}
As  $\|u_{0,n}\|_{H^{\alpha}}\le \|u_{0,n}\|_{H^{s}}\le \|u_{0}\|_{H^{s}}$, from Lemma~\ref{apriESTlower}, we find that
\begin{equation}\label{eqexist3}
    \sup_{t\in[0,T]}\left\|u_n(t)\right\|_{H^\alpha} \le 2 \left\|u_0\right\|_{H^\alpha}, \quad \alpha\in(s_d,s]   \end{equation}
 whenever $T\le(1+A_s\left\|u_0\right\|_{H^s})^{-2}$,  and
\begin{equation}\label{eqexist4}
  K:=  \sup_{n\ge 1}
\int_0^T \left\|u_n(t)\right\|_{L^\infty}+\left\|\nabla u_n(t)\right\|_{L^{\infty}} dt <\infty.
\end{equation}

Now we set $v_{n,m}:=u_n-u_m$, so that $v_{n,m}$ satisfies 
\begin{equation}\label{eqexiscauchy}
    \partial_t v_{n,m}-\mathcal{R}_1 \Delta v_{n,m} +u_n\partial_{x_1}u_n-u_m\partial_{x_1}u_m=0,
\end{equation}
with initial datum $v_{n,m}(\cdot, 0)=u_{0,n}-u_{0,m}$.  Arguing as in the uniqueness Subsection~\ref{equniqu1} and using Lemma~\ref{eqexist2}, we deduce
\begin{equation*}
    \sup_{t\in[0,T]}\left\|v_{n,m}(t)\right\|_{L^2}  \le e^{cK} \left\|u_{0,n}-u_{0,m}\right\|_{L^2} \lesssim n^{-s}\|u_0\|_{H^{s}}.
\end{equation*}
Interpolating with~\eqref{eqexist3} we immediately get that
\begin{equation}\label{eqexist5}
    \sup_{t\in[0,T]}\left\|v_{n,m}(t)\right\|_{H^\alpha} \lesssim n^{-(s-\alpha)}\|u_0\|_{H^{s}}
\end{equation}
 so that $\left\{v_{n,m}\right\}$ is a Cauchy sequence in $C([0,T];H^\alpha(\mathbb{R}^d))$ for all $0\le\alpha< s$. 

Below we will find that $\left\{v_{n,m}\right\}$ is also Cauchy in $C([0,T];H^s(\mathbb{R}^d))$, but first we show that the sequence is Cauchy in $L^{1}([0,T];W^{1,\infty}(\mathbb{R}^d))$. In fact we prove something stronger that will help later.

\begin{lemma} \label{exislemma1} 
 Let $m\ge n \ge 1$. Then
 \begin{equation*}
 \int_0^T n\left\|v_{n,m}(t)\right\|_{L^\infty}+\left\|\nabla v_{n,m}(t)\right\|_{L^{\infty}} dt\underset{n\to \infty}{\rightarrow} 0.
\end{equation*}
\end{lemma}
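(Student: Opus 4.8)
The plan is to apply the inhomogeneous Strichartz estimate of Lemma~\ref{refinStri}, once to $v_{n,m}$ and once (componentwise) to $\nabla v_{n,m}$, feeding in the quantitative convergence \eqref{eqexist5}, namely $\sup_{[0,T]}\|v_{n,m}(t)\|_{H^{\alpha}}\lesssim n^{-(s-\alpha)}\|u_0\|_{H^s}$ for $0\le\alpha<s$, together with the uniform bounds \eqref{eqexist3} and \eqref{eqexist4} for the approximating solutions. First I would fix an auxiliary exponent $\sigma$ with $s_d-1<\sigma<s-1$; this range is nonempty precisely because $s>s_d$, and one checks that $\sigma>0$ and $0\le\sigma+1<s$. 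Rewriting \eqref{eqexiscauchy} using $u_n\partial_{x_1}u_n-u_m\partial_{x_1}u_m=\tfrac12\partial_{x_1}\big((u_n+u_m)v_{n,m}\big)$, the difference $v_{n,m}$ solves $\partial_t v_{n,m}-\mathcal{R}_1\Delta v_{n,m}=F_{n,m}$ with $F_{n,m}:=-\tfrac12\partial_{x_1}\big((u_n+u_m)v_{n,m}\big)$, and $\nabla v_{n,m}$ solves the same equation with forcing term $\nabla F_{n,m}$.

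The central nonlinear estimate I would establish is, by moving the $x_1$-derivative outside, applying the Kato--Ponce product estimate of Lemma~\ref{conmKP} (valid since $\sigma>0$), and using $\sup_{[0,T]}\|u_n(t)\|_{H^{\sigma}}\le\|u_n(t)\|_{H^s}\le 2\|u_0\|_{H^s}$ from \eqref{eqexist3},
\begin{equation*}
\|F_{n,m}(t)\|_{H^{\sigma-1}}\lesssim \|u_0\|_{H^s}\,\|v_{n,m}(t)\|_{L^\infty}+\big(\|u_n(t)\|_{L^\infty}+\|u_m(t)\|_{L^\infty}\big)\|v_{n,m}(t)\|_{H^{\sigma}},
\end{equation*}
and the same bound with $(\sigma-1,\sigma)$ replaced by $(\sigma,\sigma+1)$. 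The decisive point is that, because the derivative is pulled out \emph{before} Kato--Ponce, no term $\|\nabla v_{n,m}(t)\|_{L^\infty}$ appears on the right-hand side, which is what allows the scheme to close.

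For the $n\|v_{n,m}\|_{L^\infty}$ contribution I would apply Lemma~\ref{refinStri} to $v_{n,m}$ at regularity $\sigma$, multiply through by $n$, substitute the nonlinear estimate, bound $\int_0^T\big(\|u_n\|_{L^\infty}+\|u_m\|_{L^\infty}\big)\,dt$ by $2K$ via \eqref{eqexist4}, and bound $n\sup_{[0,T]}\|v_{n,m}(t)\|_{H^{\sigma}}\lesssim n^{1-(s-\sigma)}\|u_0\|_{H^s}\to0$ via \eqref{eqexist5} and $\sigma<s-1$. This produces an inequality of the form $X_n\le CT^{1/2}\big(\varepsilon_n+C'\|u_0\|_{H^s}X_n\big)$ with $X_n:=n\int_0^T\|v_{n,m}(t)\|_{L^\infty}\,dt$ and $\varepsilon_n\to0$. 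Since $T\le(1+A_s\|u_0\|_{H^s})^{-2}$ gives $T^{1/2}\|u_0\|_{H^s}\le A_s^{-1}$, after enlarging the constant $A_s$ of Lemma~\ref{apriESTlower} if necessary — which only shrinks the guaranteed existence time — the self-referential term can be absorbed, yielding $X_n\to0$, and in particular $\int_0^T\|v_{n,m}(t)\|_{L^\infty}\,dt\to0$.

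Finally, for the $\|\nabla v_{n,m}\|_{L^\infty}$ contribution I would apply Lemma~\ref{refinStri} componentwise to $\nabla v_{n,m}$ at regularity $\sigma$, using $\|\nabla F_{n,m}(t)\|_{H^{\sigma-1}}\le\|F_{n,m}(t)\|_{H^{\sigma}}$ and the analogue of the nonlinear estimate above; the right-hand side is then controlled by $\sup_{[0,T]}\|v_{n,m}(t)\|_{H^{\sigma+1}}\lesssim n^{-(s-\sigma-1)}\|u_0\|_{H^s}\to0$ and by $\|u_0\|_{H^s}\int_0^T\|v_{n,m}(t)\|_{L^\infty}\,dt\to0$ (just obtained), so that no further absorption is required. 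Summing the two limits gives the claim. I expect the main obstacle to be exactly this self-referential term in the $v_{n,m}$-estimate: one cannot bypass it through Sobolev embedding, because the admissible $\sigma$ may lie below $d/2$, so one is forced to run the Strichartz estimate and absorb, which is why the smallness of $T$ (equivalently, the size of $A_s$) must enter the argument.
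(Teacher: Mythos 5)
Your argument is correct and follows the same blueprint as the paper: rewrite the nonlinearity as $\tfrac12\partial_{x_1}\big((u_n+u_m)v_{n,m}\big)$, apply Lemma~\ref{refinStri} once to $v_{n,m}$ and once to $\nabla v_{n,m}$ at one unit lower regularity for the former (your $\sigma$ is the paper's $\alpha-1$), estimate the forcing by Lemma~\ref{conmKP}, and conclude from \eqref{eqexist5} together with \eqref{eqexist3} and \eqref{eqexist4}. The one genuine difference is how the Kato--Ponce term $\|u_n+u_m\|_{H^{\sigma}}\|v_{n,m}\|_{L^\infty}$ is handled in the $n\|v_{n,m}\|_{L^\infty}$ estimate: the paper simply asserts the bound $\|u_0\|_{H^{\alpha-1}}\|v_{n,m}\|_{H^{\alpha-1}}$ via ``Sobolev embedding,'' which at that regularity would require $\alpha-1>d/2$ and hence costs the extra factor of $n^{-1}$ when only $\alpha<s$ is available; you instead keep this term as $\|u_0\|_{H^s}\|v_{n,m}(t)\|_{L^\infty}$ and absorb it into the left-hand side using $T^{1/2}\|u_0\|_{H^s}\lesssim A_s^{-1}$, after checking (as you should state explicitly, via $X_n\le 2nK<\infty$) that the quantity being absorbed is finite. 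This absorption step is legitimate --- enlarging $A_s$ only shrinks the existence time and is harmless for the compactness scheme --- and it actually supplies a cleaner justification of the extra $n^{-1}$ gain than the paper's one-line appeal to Sobolev embedding. The remaining ingredients (the choice $\sigma\in(s_d-1,s-1)$, the verification that $\sigma>0$ so Lemma~\ref{conmKP} applies, and the use of $\int_0^T\|v_{n,m}\|_{L^\infty}\,dt\to0$ as input to the gradient estimate so that no second absorption is needed) are all in order.
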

\begin{proof}
 Let $\alpha\in(s_d,s)$ and rewrite the nonlinear term in \eqref{eqexiscauchy} as
 $$
 u_n\partial_{x_1}u_n-u_m\partial_{x_1}u_m=\tfrac{1}{2}\partial_{x_1}\big((u_n+u_m)v_{n,m}\big).
 $$ Taking  $w=\nabla v_{n,m}$ and $F=-\tfrac{1}{2}\nabla\partial_{x_1}\big((u_n+u_m)v_{n,m}\big)$, by Lemma~\ref{refinStri}, 
 \begin{align*}
 \int_0^T \|\nabla &v_{n,m}(t)\|_{L^{\infty}} dt\\
 &\lesssim T^{1/2} \left(\sup_{t\in[0,T]}\left\|v_{n,m}(\cdot,t)\right\|_{H^\alpha}+\int_0^T \left\|\partial_{x_1}\big((u_n+u_m)v_{n,m}\big)(\cdot,t) \right\|_{H^{\alpha-1}} dt\right)\\
&\lesssim T^{1/2} \left(\sup_{t\in[0,T]}\left\|v_{n,m}(\cdot,t)\right\|_{H^\alpha}+\int_0^T \|u_0\|_{H^\alpha}\|v_{n,m}(\cdot,t)\|_{H^\alpha}\, dt\right),
\end{align*}
where the second inequality follows from Lemma~\ref{conmKP} and Sobolev embedding. The desired convergence for this part then follows by applying \eqref{eqexist5}. 

On the other hand, taking $w=v_{n,m}$ and $F=-\tfrac{1}{2}\partial_{x_1}\big((u_n+u_m)v_{n,m}\big)$ in Lemma~\ref{refinStri}, we also have
 \begin{align*}
 \int_0^T \|&v_{n,m}(t)\|_{L^{\infty}} dt\\
&\lesssim T^{1/2} \left(\sup_{t\in[0,T]}\left\|v_{n,m}(\cdot,t)\right\|_{H^{\alpha-1}}+\int_0^T \|u_0\|_{H^{\alpha-1}}\|v_{n,m}(\cdot,t)\|_{H^{\alpha-1}}\, dt\right),
\end{align*}
and so we obtain an extra factor of $n^{-1}$  when applying \eqref{eqexist5}.
\end{proof}

\begin{prop}\label{exisprop1} 
Let $m\ge n \ge 1$. Then
\begin{equation*}
    \sup_{t\in[0,T]}\left\|v_{n,m}(t)\right\|_{H^s}\underset{n\to \infty}{\rightarrow} 0.
\end{equation*}
\end{prop}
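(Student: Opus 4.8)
The plan is to run an energy estimate for $v_{n,m}$ at the top regularity $H^s$, following the Bona--Smith scheme. Apply $J^s$ to equation~\eqref{eqexiscauchy}, written in the symmetrised form
$$
\partial_t v_{n,m}-\mathcal{R}_1\Delta v_{n,m}+\tfrac12\partial_{x_1}\big((u_n+u_m)v_{n,m}\big)=0,
$$
multiply by $J^s v_{n,m}$ and integrate in space. The Riesz term vanishes by skew-adjointness. The nonlinear contribution splits: writing $b:=\tfrac12(u_n+u_m)$, we have $\langle J^s\partial_{x_1}(bv_{n,m}),J^s v_{n,m}\rangle=\langle[J^s,b]\partial_{x_1}v_{n,m},J^sv_{n,m}\rangle+\langle b\,\partial_{x_1}J^sv_{n,m},J^sv_{n,m}\rangle+(\text{terms from }\partial_{x_1}b)$. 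The commutator term is handled by Lemma~\ref{conmKP} and Cauchy--Schwarz; the $b\,\partial_{x_1}J^sv_{n,m}\cdot J^sv_{n,m}$ term is integrated by parts to pick up $\|\partial_{x_1}b\|_{L^\infty}\|J^sv_{n,m}\|_{L^2}^2$. This yields an inequality of the shape
$$
\frac{d}{dt}\|v_{n,m}(t)\|_{H^s}^2\lesssim \big(\|u_n(t)\|_{W^{1,\infty}}+\|u_m(t)\|_{W^{1,\infty}}\big)\|v_{n,m}(t)\|_{H^s}^2 + R_{n,m}(t),
$$
where $R_{n,m}(t)$ collects the genuinely problematic terms in which $s$ derivatives land on $u_n$ or $u_m$ — precisely, terms like $\|J^s b(t)\|_{L^2}\|v_{n,m}(t)\|_{L^\infty}\|J^sv_{n,m}(t)\|_{L^2}$ coming from the second estimate of Lemma~\ref{conmKP}, and similarly $\|\nabla v_{n,m}(t)\|_{L^\infty}\|J^{s-1}b(t)\|_{L^2}\|J^sv_{n,m}(t)\|_{L^2}$ type expressions.

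The key point — the heart of the Bona--Smith method — is to control $R_{n,m}$ using the gains already proved. We do \emph{not} have a uniform $H^s$ bound that is small, but $\|v_{n,m}\|_{L^\infty}$, $\|\nabla v_{n,m}\|_{L^\infty}$ are controlled after time integration by Lemma~\ref{exislemma1}, with $\int_0^T \|v_{n,m}(t)\|_{L^\infty}\,dt = o(n^{-1})$ and $\int_0^T\|\nabla v_{n,m}(t)\|_{L^\infty}\,dt = o(1)$. Meanwhile $\|J^s u_n(t)\|_{L^2}$ is \emph{not} bounded uniformly in $n$ — since $u_0\in H^s$ only — but $\|J^s u_{0,n}\|_{L^2}\lesssim n^{s-\alpha}\|u_0\|_{H^\alpha}$ for $\alpha<s$, no wait: more usefully, $u_{0,n}$ is a Fourier cutoff at frequency $n$, so $\|u_{0,n}\|_{H^{d+1}}\lesssim n^{d+1-s}\|u_0\|_{H^s}$ and hence by Lemma~\ref{apriESTlower} (applied with the high regularity $d+1$), $\sup_{[0,T]}\|u_n(t)\|_{H^{\sigma}}\lesssim n^{\sigma-s}\|u_0\|_{H^s}$ for $s\le\sigma\le d+1$; in particular $\sup_{[0,T]}\|u_n(t)\|_{H^{s+1}}\lesssim n\|u_0\|_{H^s}$. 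Thus in $R_{n,m}$ the factor $\|J^s b(t)\|_{L^2}$, if we can afford one extra derivative on $b$, is at worst $O(n)$, and it multiplies $\|v_{n,m}(t)\|_{L^\infty}=o(n^{-1})$ after integration — the powers of $n$ cancel. More precisely, one should arrange the commutator estimates so that whenever $s$ derivatives fall on $b=\tfrac12(u_n+u_m)$, the companion factor is an $L^\infty$-type norm of $v_{n,m}$ or $\nabla v_{n,m}$ whose time integral carries the compensating negative power of $n$; then $\int_0^T R_{n,m}(t)\,dt\to 0$.

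Having obtained $\int_0^T R_{n,m}(t)\,dt\to0$ and the uniform bound $\int_0^T(\|u_n\|_{W^{1,\infty}}+\|u_m\|_{W^{1,\infty}})\,dt\le 2K<\infty$ from~\eqref{eqexist4}, Gronwall's inequality gives
$$
\sup_{t\in[0,T]}\|v_{n,m}(t)\|_{H^s}^2 \lesssim e^{cK}\Big(\|u_{0,n}-u_{0,m}\|_{H^s}^2 + \int_0^T R_{n,m}(t)\,dt\Big),
$$
and both terms on the right tend to zero as $n\to\infty$ — the first by the second part of Lemma~\ref{eqexist2} — uniformly in $m\ge n$. This is exactly the claimed conclusion. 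The main obstacle, and the step requiring the most care, is the bookkeeping in the previous paragraph: one must verify that every occurrence of a top-order derivative on $u_n$ or $u_m$ in the energy identity is paired with a low-regularity norm of $v_{n,m}$ that enjoys the $n^{-1}$ (or $o(1)$) gain, and that the extra derivative needed to bound $\|J^s u_n\|_{L^2}$ or $\|J^{s+1}u_n\|_{L^2}$ stays within the range $[s,d+1]$ where Lemma~\ref{apriESTlower} supplies the polynomial-in-$n$ bound; the choice of the auxiliary exponent $\alpha\in(s_d,s)$ and the interpolation~\eqref{eqexist5} are what make the balance work.
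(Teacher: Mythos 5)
Your proposal is correct and follows essentially the same route as the paper: an $H^s$ energy estimate for $v_{n,m}$ in which the only dangerous terms are those placing $s+1$ derivatives on $u_n$ (or $u_m$), bounded by $O(n)\|u_0\|_{H^s}$ via propagation of the $H^{s+1}$ norm of the mollified data, and compensated by the $o(n^{-1})$ decay of $\int_0^T\|v_{n,m}\|_{L^\infty}$ from Lemma~\ref{exislemma1}, with Gronwall (ODE comparison) closing the argument. The only cosmetic differences are that the paper splits the nonlinearity asymmetrically as $v_{n,m}\partial_{x_1}u_n+u_m\partial_{x_1}v_{n,m}$ rather than symmetrically, and obtains $\sup_t\|u_n(t)\|_{H^{s+1}}\lesssim n\|u_0\|_{H^s}$ by running the energy estimate directly at regularity $s+1$ with the uniform bound on $K$ rather than by interpolating against $H^{d+1}$.
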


\begin{proof} Applying the operator $J^s$ to~\eqref{eqexiscauchy}, rewriting the nonlinearity as$$u_n\partial_{x_1}u_n-u_m\partial_{x_1}u_m=v_{n,m}\partial_{x_1}u_n+u_m\partial_{x_1}v_{n,m},$$ and then multiplying the equation by $J^s v_{n,m}$ and integrating in space, we obtain
\begin{equation*}
    \begin{aligned}
    \frac{1}{2}\frac{d}{dt}\left\|J^s v_{n,m}\right\|_{L^2}^2    & =-\int_{\mathbb{R}^d} J^s\big(u_m\partial_{x_1}v_{n,m}\big)J^s v_{n,m} -\int_{\mathbb{R}^d} J^s\big(v_{n,m}\partial_{x_1}u_n\big)J^s v_{n,m} \\
    &=: -(A_1+A_2).
    \end{aligned}
\end{equation*}
Now by integrating by parts, we can write
\begin{equation*}
    \begin{aligned}
    A_1=\int_{\mathbb{R}^d}[J^s,u_m]\partial_{x_1}v_{n,m} J^sv_{n,m}-\frac{1}{2}\int_{\mathbb{R}^d} \partial_{x_1}u_m (J^sv_{n,m})^2,
    \end{aligned}
\end{equation*}
from which it follows from the Cauchy--Schwartz inequality and the commutator estimate of Lemma~\ref{conmKP} that
\begin{equation*}
    \begin{aligned}
    |A_1| \lesssim \left\|\nabla u_m\right\|_{L^{\infty}}\left\|J^ sv_{n,m}\right\|_{L^2}^2.
        \end{aligned}
\end{equation*}
On the other hand, we can write
\begin{equation*}
     \begin{aligned}
    A_2=\int_{\mathbb{R}^d}[J^s,v_{n,m}]\partial_{x_1}u_{n} J^sv_{n,m}+\int_{\mathbb{R}^d} v_{n,m}(J^s\partial_{x_1}u_n) J^sv_{n,m}.
    \end{aligned}
\end{equation*}
Again by the Cauchy--Schwarz inequality and the commutator estimate of Lemma~\ref{conmKP},   
\begin{equation*}
    \begin{aligned}
    |A_2|& \lesssim  \left\|\nabla u_n\right\|_{L^\infty} \left\|J^s v_{n,m} \right\|_{L^2}^2+\left\|\nabla v_{n,m}\right\|_{L^\infty} \left\|J^s u_{n} \right\|_{L^2}\left\|J^s v_{n,m}\right\|_{L^2} \\
    &\hspace{0.5cm}+\left\|v_{n,m}\right\|_{L^{\infty}}\left\|J^{s+1} u_{n}\right\|_{L^2}\left\|J^s v_{n,m}\right\|_{L^2}.
    \end{aligned}
\end{equation*}
Now arguing as in the proof of Lemma~\ref{apriEST}, we have
\begin{equation*}
    \frac{d}{dt}\left\|J^{s+1}u_n\right\|_{L^2}^2 \lesssim \left\|\nabla u_n\right\|_{L^{\infty}}\left\|J^{s+1}u_n\right\|_{L^2}^2,
\end{equation*}
so, by Gronwall's inequality, 
\begin{equation*}
\begin{aligned}
\left\|J^{s+1}u_n\right\|_{L^2} &\le e^{cK}\left\|J^{s+1}u_{0,n}\right\|_{L^2} \lesssim n \left\|u_{0}\right\|_{H^s}.
\end{aligned}
\end{equation*}
where $K$ is defined as in~\eqref{eqexist4}. In view of ~\eqref{eqexist3}, we can bound both $\left\|J^s u_{n}\right\|_{L^2}$ and $\left\|J^s v_{n,m}\right\|_{L^2}$ by a constant multiple of $\left\|u_0\right\|_{H^s}$, so that
\begin{equation*}
\begin{aligned}
|A_2| \lesssim \left\|\nabla u_n\right\|_{L^{\infty}}\left\|J^s v_{n,m}\right\|_{L^2}^2+\big( n\left\|v_{n,m}\right\|_{L^{\infty}}+\left\|\nabla v_{n,m}\right\|_{L^{\infty}}\big)\left\|u_{0}\right\|_{H^s}^2.
\end{aligned}
\end{equation*}

Summing up our estimates for $A_1$ and $A_2$, we find that
\begin{equation}
\begin{aligned}
\frac{d}{dt}\left\|J^s v_{n,m}(t)\right\|_{L^2}^2 & \le  
a(t)\left\|J^s v_{n,m}(t)\right\|_{L^2}^2 +b(t)\end{aligned}
\end{equation}
where 
\begin{align*}
   a(t)&:=C_0 \Big(\left\|\nabla u_n(t)\right\|_{L^{\infty}}+\left\|\nabla u_m(t)\right\|_{L^{\infty}}\Big),\\
    b(t)&:=C_1\Big(n \left\|v_{m,m}(t)\right\|_{L^{\infty}}+ \left\|\nabla v_{n,m}(t)\right\|_{L^{\infty}}\Big) \left\|u_{0}\right\|_{H^s}^2.
\end{align*}
Now if $g(t)$ solves
\begin{equation*}
    \left\{\begin{aligned}
    &\frac{d}{dt} g(t) = a(t)g(t)+b(t), \\
    &g(0)=\left\|u_{0,n}-u_{0,m}\right\|_{H^s}^2,
    \end{aligned}\right.
\end{equation*}
then
\begin{equation*}
    \begin{aligned}
    \frac{d}{dt}\left( \left\|J^s v_{n,m}(t)\right\|_{L^2}^2 - g(t)\right)\le a(t)\left(\left\|J^s v_{n,m}(t)\right\|_{L^2}^2 - g(t)\right)
    \end{aligned}
\end{equation*}
with initial condition, $\left\|J^s v_{n,m}(0)\right\|_{L^2}^2 - g(0)=0$. Then by an application of Gronwall's inequality, we find that  $\left\|J^s v_{n,m}(t)\right\|_{L^2}^2 \le g(t)$ for all $t \ge 0$. 
Now as $g(t)$ has the explicit form
\begin{equation*}\label{expli}
    g(t)=g(0)e^{\int_0^t a(t')\, dt'}+\int_0^t b(\tau)e^{\int_{\tau}^t a(t') \, dt'} \, d\tau,
\end{equation*}
we find that $ \sup_{t\in[0,T]} \left\|J^s v_{n,m}(t)\right\|^2_{L^2}$ is bounded by 
\begin{equation*}
  e^{cK}\Big(\left\|u_{0,n}-u_{0,m}\right\|_{H^s}^2+\left\|u_{0}\right\|_{H^s}^2\int_0^T n\left\|v_{n,m}(t)\right\|_{L^\infty}+\left\|\nabla v_{n,m}(t)\right\|_{L^{\infty}} dt\Big) \underset{n\to\infty}{=}0,
\end{equation*}
where the convergence follows from Lemmas~\ref{eqexist2} and \ref{exislemma1}.
\end{proof}

We deduce from Proposition \ref{exisprop1} and Lemma \ref{exislemma1} that $u_n$ has a limit  $u$ in 
$$C\big([0,T];H^s(\mathbb{R}^d))\cap L^1([0,T];W^{1,\infty}(\mathbb{R}^d)\big).$$ 
Now recalling that \begin{equation}\label{Intequ}
    u_n(t)=e^{t \mathcal{R}_1 \Delta }u_{0,n}-\frac{1}{2}\int_{0}^t e^{(t-t') \mathcal{R}_1 \Delta }\partial_{x_1}[u_n(t')]^2\, dt',
\end{equation}
and noting that
\begin{equation*}
    \begin{aligned}
    &\left\|\int_0^t e^{(t-t')\mathcal{R}_1 \Delta }\partial_{x_1}\left[u_n(t')^2-u(t')^2\right]\, dt'\right\|_{H^{s-1}} \\
    &\hspace{3cm}\le \int_0^t \left\|u_n(t')^2-u(t')^2\right\|_{H^s} \, dt' \\
    & \hspace{3cm} \lesssim \int_0^t \left\|u_n(t')+u(t')\right\|_{H^s}\left\|u_n(t')-u(t')\right\|_{H^s} \, dt',
    \end{aligned}
\end{equation*}
we see that $u$ also solves the integral formulation of \eqref{HBO-IVP} in the  $C([0,T];H^{s-1}(\mathbb{R}^d))$ sense.


\subsection{Continuity of the flow map.} Let $s>s_d$ where $s_d := d/2+1/2$ for $d\ge3$ and $s_2 := 5/3$.
Fix  $u_0\in H^s$ and $t<T=T(\left\|u_0\right\|_{H^s})$. We are required to prove that for all $\epsilon>0$,  there exists $\delta>0$ such that for all data $v_0$ such that $\left\|u_0-v_0\right\|_{H^s}<\delta$, we have
\begin{equation}\label{eqcontdp1}
    \left\|u(t)-v(t)\right\|_{H^s}<\epsilon.
\end{equation}
For $n\ge 1$, we regularize the initial data as in the previous section. Then from the triangle inequality we obtain 
\begin{equation*}
    \left\|u(t)-v(t)\right\|_{H^s}\le \left\|u(t)-u_n(t)\right\|_{H^s}+\left\|u_n(t)-v_n(t)\right\|_{H^s}+\left\|v_n(t)-v(t)\right\|_{H^s}.
\end{equation*}
By the definitions of $u$ and $v$,  we can take $n$ sufficiently large so that
\begin{equation}\label{eqcontdp2}
 \left\|u(t)-u_n(t)\right\|_{H^s}+\left\|v_n(t)-v(t)\right\|_{H^s} < \epsilon/2.
\end{equation}
On the other hand, \begin{equation*}
    \left\|u_{0,n}-v_{0,n}\right\|_{H^{d+1}}\lesssim n^{d+1-s} \left\|u_0-v_0\right\|_{H^s} \le n^{d+1-s} \delta. 
\end{equation*}
Then using the continuity of the flow map for smooth solutions, we can choose $\delta>0$ small enough to ensure
\begin{equation}\label{eqcontdp3}
   \left\|u_{n}(t)-v_{n}(t)\right\|_{H^{s}} \le  \left\|u_{n}(t)-v_{n}(t)\right\|_{H^{d+1}} \le \epsilon/2.
\end{equation}
Therefore estimate~\eqref{eqcontdp1} follows by combining~\eqref{eqcontdp2} and~\eqref{eqcontdp3}.


\section{Appendix: Ill-posedness results}

Here we prove that~\eqref{HBO-IVP}  cannot be solved in $H^s(\mathbb{R}^d)$ by a Picard iterative scheme based on the Duhamel formula. This result can be viewed as an extension of~\cite{molin}, where the $C^2$ ill-posedness in $H^s(\mathbb{R})$ is established for the Benjamin--Ono equation.

\begin{proof}[Proof of Theorem~\ref{illpossed}]
Suppose that there exists $T>0$ such that ~\eqref{HBO-IVP} is locally well-posed in $H^s(\mathbb{R}^d)$ on the time interval $[0,T)$ and such that the flow map 
$$\Phi(t): H^s(\mathbb{R}^d)\to H^s(\mathbb{R}^d), \hspace{0.2cm} u_0 \mapsto u(t)$$ 
is $C^2$ differentiable at the origin. When $\phi \in H^s(\mathbb{R}^d)$, we have that $\Phi(\cdot)\phi$ is a solution of~\eqref{HBO-IVP} with initial data $\phi$ so by Duhamel's principle $\Phi(t)\phi$ must satisfy the integral equation
$$\Phi(t)\phi=e^{t \mathcal{R}_1 \Delta }\phi-\frac{1}{2}\int_{0}^t e^{(t-t') \mathcal{R}_1 \Delta }\partial_{x_1}\big[\Phi(t')\phi\big]^2\, dt'.$$
We compute the Fr\' echet derivative of $\Phi(t)$ at $\psi$ with direction $\phi_1$,
\begin{equation}\label{Fder3}
    d_{\psi}\Phi(t)(\phi_1)=e^{t \mathcal{R}_1 \Delta}\phi_1- \int_{0}^t e^{(t-t') \mathcal{R}_1 \Delta }\partial_{x_1}\big[\Phi(t')\psi \, d_{\psi}\Phi(t')(\phi_1)\big]\, dt'.
\end{equation}
Supposing that~\eqref{HBO-IVP} is well-posed, uniqueness implies that $\Phi(t)(0)=0$, so that $d_{0}\Phi(t)(\phi_1)=e^{t \mathcal{R}_1 \Delta}\phi_1$.
Differentiating again we find that
\begin{equation}
    \begin{aligned}\nonumber
    d_0^2\Phi(t)(\phi_1,\phi_2)&=\left. \frac{\partial}{\partial \gamma}\Big(\gamma \mapsto d_{\gamma \phi_2}\Phi(t)(\phi_1)\Big) \right|_{\gamma=0}\\
    &=-\left.\int_0^t e^{(t-t')\mathcal{R}_1\Delta} \partial_{x_1}\big[ d_{\gamma \phi_2}\Phi(t)(\phi_2)d_{\gamma \phi_2}\Phi(t)(\phi_1)\big] \, dt'\right|_{\gamma=0} \\
    &\quad-\left.\int_0^t e^{(t-t')\mathcal{R}_1\Delta} \partial_{x_1}\big[ \Phi(t)(\gamma \phi_2)d^2_{\gamma \phi_1}\Phi(t)(\phi_1,\phi_2)\big] \, dt' \right|_{\gamma=0},
    \end{aligned}
\end{equation}
so that
\begin{align*}
d^2_0\Phi(t)(\phi_1,\phi_2) =- \int_0^t e^{(t-t')\mathcal{R}_1 \Delta}\partial_{x_1}\big[(e^{t'\mathcal{R}_1 \Delta} \phi_1)(e^{t'\mathcal{R}_1 \Delta} \phi_2)\big]\, dt'. \label{Fder2}
\end{align*}
Now, if the flow map were $C^2$ then $d^2_0\Phi(t)$ would be bounded from $H^s\times H^s$ to $H^s$;
\begin{equation*}
\left\|\int_0^t e^{(t-t')\mathcal{R}_1 \Delta}\partial_{x_1}\big[(e^{t'\mathcal{R}_1 \Delta} \phi_1)(e^{t'\mathcal{R}_1 \Delta}  \phi_2)\big]\ dt'\right\|_{H^s}\lesssim \left\|\phi_1 \right\|_{H^s}\left\|\phi _2\right\|_{H^s}.
\end{equation*}
We will prove that this does not hold in general, following the arguments in~\cite{molin}.

Indeed, we will construct two sequences of functions, $\phi_{1,N}$ and $\phi_{2,N}$, such that
\begin{equation}\label{ILP2}
    \left\|\phi_{1,N}\right\|_{H^s}, \left\|\phi_{2,N}\right\|_{H^s} \le C
\end{equation}
and 
\begin{equation}\label{ILPP2}
    \lim_{N\to \infty} \left\|\int_0^t e^{(t-t')\mathcal{R}_1 \Delta} \partial_{x_1}\big[(e^{t'\mathcal{R}_1 \Delta} \phi_{1,N})(e^{t'\mathcal{R}_1 \Delta}  \phi_{2,N})\big]\ dt'\right\|_{H^s}=\infty.
\end{equation}
We define $\phi_{1,N}$ and $\phi_{2,N}$ via their Fourier transforms as
\begin{equation*}
    \left\{\begin{aligned}
    &\widehat{\phi_{1,N}}(\xi)=\lambda^{\frac{1-2d}{2d}}N^{-s}\chi_{D_1}(\xi), &&\text{ with } D_1=[N,N+\lambda]\times [\lambda^{1/d}/2,\lambda^{1/d}]^{d-1}, \\
    &\widehat{\phi_{2,N}}(\xi)=\lambda^{\frac{1-2d}{2d}}\chi_{D_2}(\xi), &&\text{ with } D_2=[3\lambda,4\lambda]\times [\lambda^{1/d}/2,\lambda^{1/d}]^{d-1} 
    \end{aligned}\right.
\end{equation*}
where $N\gg 1$, $\lambda=N^{-(1+\epsilon)}$ and  $0<\epsilon<1/(2d-1)$. First, we observe that $\phi_{1,N}$ and $\phi_{2,N}$ satisfy~\eqref{ILP2}.

On the other hand, taking the Fourier transform with respect to the space variable,
\begin{equation}\label{ILP3}
    \begin{aligned}
     \widehat{I_N}(\xi,t)&:=\left\{\int_0^t e^{(t-t')\mathcal{R}_1 \Delta} \partial_{x_1} \big[(e^{t'\mathcal{R}_1 \Delta} \phi_{1,N})(e^{t'\mathcal{R}_1 \Delta}  \phi_{2,N})\big]\ dt'\right\}^{\wedge}(\xi)\\
    &=\int_{K_{\xi}} \xi_1 e^{it\xi_1|\xi|}\frac{e^{i\sigma(\xi,\eta)t}-1}{\sigma(\xi,\eta)} \widehat{\phi_{1,N}}(\eta)\widehat{\phi_{2,N}}(\xi-\eta)\, d\eta 
    \end{aligned}
\end{equation}
where the resonant function is given by
\begin{equation*}
\begin{aligned}
\sigma(\xi,\eta):=-\xi_1|\xi|+(\xi_1-\eta_1)|\xi-\eta|+\eta_1|\eta|
\end{aligned}
\end{equation*}
and
\begin{equation*}
    K_{\xi}:=\left\{\eta\in \mathbb{R}^d\, : \, \eta\in D_1, \, \xi-\eta\in D_2 \right\}.
\end{equation*}
When $\eta \in D_1$ and $\xi-\eta \in D_2$, we claim that 
\begin{equation}\label{ILP4}
    |\sigma(\xi,\eta)| \sim \lambda N.
\end{equation}
Indeed, using that $\widehat{I}_N(\xi)$ is supported on 
$$D_3= [N+3\lambda,N+5\lambda]\times[\lambda^{1/d},2\lambda^{1/d}]^{d-1}$$
we easily obtain
\begin{equation}\label{ILP5}
    (\xi_1-\eta_1)|\xi-\eta|\sim \lambda^{(d+1)/d}.
\end{equation}
Moreover, from the inequality
$$|\xi|\le \Big((N+5\lambda)^2+4(d-1)\lambda^{2/d}\Big)^{1/2}  \le N+6\lambda$$
which holds for $N$ large, $\lambda=N^{-(1+\epsilon)}$ with $0<\epsilon<1/(2d-1)$, we have
\begin{equation}\label{ILP6}
\begin{aligned}
    (N+3\lambda)^2\le \xi_1|\xi|\le (N+6\lambda)^2.
\end{aligned}    
\end{equation}
Analogously, we get
\begin{equation}\label{ILP7}
\begin{aligned}
    N^2\le \eta_1|\eta|  \le (N+2\lambda)^2.
\end{aligned}    
\end{equation}
Then,~\eqref{ILP4} follows from~\eqref{ILP5},~\eqref{ILP6} and~\eqref{ILP7}. 

Now, since $\lambda N=N^{-\epsilon}$ and $|\sigma(\xi,\eta)|\sim \lambda N$ it follows 
\begin{equation}\label{ILP8}
\left|\frac{e^{i\sigma(\xi,\eta)t}-1}{\sigma(\xi,\eta)}\right|= |t|+O\left(\frac{1}{N^{\epsilon}}\right).
\end{equation}
From~\eqref{ILP8} and $|K_\xi|\sim \lambda^{(2d-1)/d}$, we infer that
\begin{align*}
|\widehat{I_N}(\xi,t)|\chi_{D_3}(\xi) & \gtrsim \frac{N \lambda^{(2d-1)/d}}{N^{s}\lambda^{(2d-1)/d}}|t| \, \chi_{D_3}(\xi).
\end{align*}
Therefore we arrive at
\begin{equation*}
\left\|I_N(t)\right\|_{H^s} \gtrsim N \lambda^{(2d-1)/2d}|t|=N^{1/2d-\epsilon((2d-1)/2d)}|t|.
\end{equation*}
Now as $0<\epsilon<1/(2d-1)$, from this we deduce~\eqref{ILPP2}, which completes the proof.
\end{proof}

The following corollary (of the proof) shows that it is not possible to solve \eqref{HBO-IVP} in $H^s(\mathbb{R}^d)$ via the usual contraction argument.

\begin{cor}\label{illrem1} Let $s\in \mathbb{R}$ and $T>0$. Then there does not exist a space $X_T$ continuously embedded in $C([0,T];H^s(\mathbb{R}^d))$ such that
\begin{equation}\label{ILP9}
    \left\|e^{t \mathcal{R}_1 \Delta }\phi\right\|_{X_T}\le C \left\|\phi \right\|_{H^s}
\end{equation}
and 
\begin{equation}\label{desire}
\left\|\int_0^t e^{(t-t')\mathcal{R}_1 \Delta}\big[F(\cdot,t')\partial_{x_1}F(\cdot, t')\big]\ dt'\right\|_{X_T}\le C\left\|F(\cdot, t)\right\|_{X_T}^2.
\end{equation}
\end{cor}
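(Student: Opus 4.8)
The plan is to show that the existence of such a space $X_T$ would force the boundedness of the bilinear estimate that was shown to fail in the proof of Theorem~\ref{illpossed}. Suppose, for contradiction, that $X_T \hookrightarrow C([0,T];H^s(\R^d))$ satisfies both \eqref{ILP9} and \eqref{desire}. Given $\phi_1,\phi_2 \in H^s(\R^d)$, set $F(\cdot,t) := e^{t\mathcal{R}_1\Delta}(\phi_1+\phi_2)$. Then by \eqref{ILP9} we have $\|F\|_{X_T} \lesssim \|\phi_1+\phi_2\|_{H^s} \lesssim \|\phi_1\|_{H^s}+\|\phi_2\|_{H^s}$, and since $X_T$ embeds continuously into $C([0,T];H^s(\R^d))$, applying \eqref{desire} yields
\begin{equation*}
\sup_{t\in[0,T]}\Big\|\int_0^t e^{(t-t')\mathcal{R}_1\Delta}\big[F\partial_{x_1}F\big](\cdot,t')\,dt'\Big\|_{H^s} \lesssim \big(\|\phi_1\|_{H^s}+\|\phi_2\|_{H^s}\big)^2.
\end{equation*}

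Next I would expand the quadratic expression $F\partial_{x_1}F = \tfrac{1}{2}\partial_{x_1}(F^2)$ with $F = e^{t'\mathcal{R}_1\Delta}\phi_1 + e^{t'\mathcal{R}_1\Delta}\phi_2$, obtaining a sum of three terms: the pure $\phi_1$ term, the pure $\phi_2$ term, and twice the cross term $\tfrac{1}{2}\partial_{x_1}\big[(e^{t'\mathcal{R}_1\Delta}\phi_1)(e^{t'\mathcal{R}_1\Delta}\phi_2)\big]$. A standard polarisation/homogeneity trick isolates the cross term: replacing $\phi_1$ by $a\phi_1$ and $\phi_2$ by $b\phi_2$ and comparing the coefficients of $ab$, $a^2$, $b^2$ (or simply using the bound for the pairs $(\phi_1,\phi_2)$, $(\phi_1,0)$, $(0,\phi_2)$ and the triangle inequality), one deduces
\begin{equation*}
\sup_{t\in[0,T]}\Big\|\int_0^t e^{(t-t')\mathcal{R}_1\Delta}\partial_{x_1}\big[(e^{t'\mathcal{R}_1\Delta}\phi_1)(e^{t'\mathcal{R}_1\Delta}\phi_2)\big]\,dt'\Big\|_{H^s} \lesssim \|\phi_1\|_{H^s}\|\phi_2\|_{H^s}.
\end{equation*}
This is precisely the bilinear estimate whose failure is established in the proof of Theorem~\ref{illpossed} via the sequences $\phi_{1,N},\phi_{2,N}$: they satisfy \eqref{ILP2} uniformly while \eqref{ILPP2} holds, so for a fixed $t\in(0,T)$ the left-hand side above is unbounded. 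This contradiction completes the proof.

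I expect the only real point requiring care to be the reduction from the squared nonlinearity $F\partial_{x_1}F$ to the genuinely bilinear expression, i.e. checking that the polarisation argument is legitimate and that the error terms (the pure $\phi_1$ and pure $\phi_2$ pieces) are controlled by the same bound; this is routine but should be spelled out. One should also note that in \eqref{ILPP2} the blow-up holds for each fixed $t>0$, so it suffices to choose any $t\in(0,T)$, and that the continuity of the embedding $X_T\hookrightarrow C([0,T];H^s)$ is what converts the $X_T$-bound \eqref{desire} into the $\sup_t\|\cdot\|_{H^s}$-bound needed to invoke the earlier computation. No new ideas beyond those in the proof of Theorem~\ref{illpossed} are needed.
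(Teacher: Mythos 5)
Your proposal is correct and follows essentially the same route as the paper: decompose $F=F_1+F_2$ with $F_i=e^{t'\mathcal{R}_1\Delta}\phi_i$, isolate the cross term by the triangle inequality, control the $X_T$-norms via \eqref{ILP9} and \eqref{desire}, and pass through the continuous embedding into $C([0,T];H^s)$ to contradict \eqref{ILPP2} with the sequences $\phi_{1,N},\phi_{2,N}$ for any fixed $t\in(0,T)$. The only cosmetic difference is that you state the isolated cross-term bound as a general bilinear estimate (via polarisation) before specialising, whereas the paper applies the triangle inequality directly to the specific sequences; both are fine.
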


\begin{proof} We write $F=F_1+F_2$ and note that
\begin{equation*}
\begin{aligned}
\Big\|&\int_0^t e^{(t-t')\mathcal{R}_1 \Delta}\big[F\partial_{x_1}F(\cdot,t')\big]\ dt'\Big\|_{X_T} 
\ge\ \left\|\int_0^t e^{(t-t')\mathcal{R}_1 \Delta}\partial_{x_1}\big[F_1F_2(\cdot,t')\big]\ dt'\right\|_{X_T} \\
&-\left\|\int_0^t e^{(t-t')\mathcal{R}_1 \Delta}\big[F_1\partial_{x_1}F_1(\cdot,t')\big]\, dt'\right\|_{X_T} -\left\|\int_0^t e^{(t-t')\mathcal{R}_1 \Delta}\big[F_2\partial_{x_1}F_2(\cdot,t')\big]\, dt'\right\|_{X_T}.
\end{aligned}
\end{equation*}
Now taking $F_1(\cdot,t'):=e^{t'\mathcal{R}_1 \Delta} \phi_{1,N}$ and $F_2(\cdot,t'):=e^{t'\mathcal{R}_1 \Delta} \phi_{1,N}$, by \eqref{ILP9} and \eqref{ILP2}, we have
$$
\|F\|_{X_T},\quad \|F_1\|_{X_T},\quad \|F_2\|_{X_T} \le C.
$$
Thus, if \eqref{desire} held, we would find that
\begin{equation*}
   \left\|\int_0^t e^{(t-t')\mathcal{R}_1 \Delta} \partial_{x_1}\big[(e^{t'\mathcal{R}_1 \Delta} \phi_{1,N})(e^{t'\mathcal{R}_1 \Delta}  \phi_{2,N})\big]\ dt'\right\|_{X_T}
\end{equation*}
is uniformly bounded in $N$, contradicting \eqref{ILPP2}.
\end{proof}


Next we prove that the flow map could not be uniformly continuous in $L^2(\mathbb{R}^2)$. We recall that Mari\c s~\cite{M} proved that there exists solitary wave solutions of the form $u_c(x_1,x_2,t)=\varphi(x_1-ct,x_2)$ with $c>0$.
That is to say, $\varphi_c$ is a solution of the time independent equation
\begin{equation}\label{statioWS}
    -c\partial_{x_1}\varphi-\mathcal{R}_1\Delta \varphi+\varphi\partial_{x_1}\varphi=0
\end{equation}
where $\varphi_c \in H^s(\mathbb{R}^2)$ for all $s\ge 0$.

\begin{proof}[Proof of Proposition~\ref{illpossedl2}]
Let $\varphi_c(x_1,x_2):=c\varphi_1(cx_1,cx_2)$ where $\varphi_1$ solves~\eqref{statioWS} with $c=1$. Then $\varphi_c$ solves~\eqref{statioWS} with $c>0$ and we consider solutions
$$u_c(x_1,x_2,t):=c\varphi_1(cx_1-c^2t,c x_2)$$
to~\eqref{HBO-IVP}. In particular we will consider solutions $u_{c_1}$ and $u_{c_2}$ with $c_1 \neq c_2$. 

By a change of variables it is easy to see that, for all $t>0$,
$$\left\|u_{c_1}(\cdot,t)\right\|_{L^2}=\left\|\varphi_1\right\|_{L^2}=\left\|u_{c_2}(\cdot,t)\right\|_{L^2},$$
so that
\begin{equation}\label{apeq1}
    \begin{aligned}
    \left\|u_{c_1}(\cdot,t)-u_{c_2}(\cdot,t)\right\|_{L^2}^2=2\left\|\varphi_{1}\right\|_{L^2}^2-2\langle u_{c_1}(\cdot,t), u_{c_2}(\cdot,t) \rangle_{L^2}.
    \end{aligned}
\end{equation}
Changing variables by $c_2 x_1 -c_2^2 t\rightarrow x_1$ and $ c_2 x_2\rightarrow x_2$, we see that
\begin{equation*}
    \begin{aligned}
    \big\langle u_{c_1}(\cdot,t), u_{c_2}(\cdot,t) \big\rangle_{L^2}=\frac{c_1}{c_2} \int \varphi_1\big(\tfrac{c_1}{c_2}  (x_1-c_2(c_1-c_2)t),\tfrac{c_1}{c_2} x_2\big)\overline{\varphi_1(x)} \, dx.
    \end{aligned}
\end{equation*}
Therefore, taking $c_1=n+1$, $c_2=n$, from the Lebesgue dominated convergence theorem, it follows  that, for all  $t>0$, 
$$\big\langle u_{c_1}(\cdot,t), u_{c_2}(\cdot,t) \big\rangle_{L^2}= \frac{c_1}{c_2} \int \varphi_1\big(\tfrac{c_1}{c_2}(x_1-nt,x_2)\big) \overline{\varphi_1(x)} \, dx \to 0 \, \text{ as } \, n \to \infty,$$
while 
$$\big\langle u_{c_1}(\cdot,0), u_{c_2}(\cdot,0) \big\rangle_{L^2} \to \left\|\varphi_1\right\|_{L^2}^2 \, \text{ as } \, n \to \infty.$$
Thus, in view of~\eqref{apeq1}, we deduce
\begin{equation*}
    \left\|u_{c_1}(\cdot,0)-u_{c_2}(\cdot,0)\right\|_{L^2} \to 0 \, \text{ as } \, n \to \infty,
\end{equation*}
while on the other hand, for all $t>0$,
\begin{equation*}
    \left\|u_{c_1}(\cdot,t)-u_{c_2}(\cdot,t)\right\|_{L^2} \to 2^{1/2}\left\|\varphi_1\right\|_{L^2} \, \text{ as } \, n \to \infty,
\end{equation*}
completing the proof.
\end{proof}


\end{document}